\newtheorem{definition}{Definition}
\newtheorem{lemma}{Lemma}
\newtheorem{theorem}{Theorem}
\newtheorem{proposition}{Proposition}[section]
\newtheorem{proof}{Proof}
\newtheorem{remark}{Remark}
\begin{document}

\title{Exact recovery in the double sparse model: sufficient and necessary signal conditions}

\author[1]{Shixiang Liu}
\author[2]{Zhifan Li}
\author[1]{Yanhang Zhang}
\author[1, 3]{Jianxin Yin}

\affil[1]{\footnotesize School of Statistics, Renmin University of China}
\affil[2]{\footnotesize Beijing Institute of Mathematical Sciences and Applications}
\affil[3]{\footnotesize Center for Applied Statistics and School of Statistics, Renmin University of China}

\date{}
\maketitle \sloppy

\begin{abstract}
The double sparse linear model, which has both group-wise and element-wise sparsity in regression coefficients, has attracted lots of attention recently. 
This paper establishes the sufficient and necessary relationship between the exact support recovery and the optimal minimum signal conditions in the double sparse model. 
Specifically, sharply under the proposed signal conditions, a two-stage double sparse iterative hard thresholding procedure achieves exact support recovery with a suitably chosen threshold parameter.
Also, this procedure maintains asymptotic normality aligning with an OLS estimator given true support, hence holding the oracle properties.
Conversely, we prove that no method can achieve exact support recovery if these signal conditions are violated. 
This fills a critical gap in the minimax optimality theory on support recovery of the double sparse model.
Finally, numerical experiments are provided to support our theoretical findings.
\end{abstract}

\begin{keywords}
 {Double sparse}, {Exact recovery}, {Minimax optimality}, {Oracle properties}, {Variable selection}
\end{keywords}

\section{Introduction}\label{sec:intro}
We consider the double sparse linear model under simultaneous group sparsity and element sparsity:
\begin{equation}\label{orireg}
    Y = X \beta^*  + \sigma \xi \in \mathbb R^n,
\end{equation}
where $Y \in \mathbb{R}^n$ is the response variable, $X \in \mathbb R^{n\times p}$ is the design matrix and $\beta^* \in \mathbb{R}^p$ is the coefficient vector. The noise level $\sigma$ and the 1-subgaussian random vector $\xi \in \mathbb{R}^n$ (independent of $X$) together constitute the random term. 
We assume the $p$ covariates can be partitioned into $m$ non-overlapping groups $\{G_j\}_{j=1}^m$ with equal group size $d$ (for groups with different sizes, we take $d=\max\{ |G_j| \}_{j=1}^m$ and the results are not affected), such that $\sum_{j=1}^m |G_j|=m \times d=p$. 
Following the definition in \cite{CZ22}, we assume that the coefficient vector $\beta^*$ belongs to the double sparse parameter space 
\begin{equation}\label{eq:oriset}
\beta^* \in  {\Theta}(s,s_0): = \left\{ \beta \in \mathbb R^p \left|~  \sum_{j=1}^m \mathbf1(\beta_{G_j}\ne \mathbf 0_d)\le s, ~ \|\beta\|_0 \le s s_0 \right.\right\},
\end{equation}
where $\| \beta\|_0$ denotes the number of nonzero components of $\beta$, and $\mathbf 1 (\cdot)$ denotes the indicator function.
The double sparse structure in \eqref{eq:oriset} implies that the number of support groups does not exceed $s$, and the total number of support elements does not exceed $ss_0$.
And we claim that $\beta$ is $(s,s_0)$-sparse if $\beta \in  {\Theta}(s,s_0)$.

This double sparsity is particularly valuable in applications such as change-point detection \cite{Zhang15change, hao22twt}, three-dimensional image formation \cite{rs14163945}, multi-task learning \cite{abra2024sgam}, among many others.
Theoretically, most existing studies \citep{CZ22, LZ22, zhang2024minimax, LZ23} focused on the minimax optimal estimation of the signal vector $\beta^*$ based on $\ell_2$ risk. 
However, the performance of support recovery (variable selection) and the asymptotic distribution in the double sparse model remain insufficiently explored.
This paper addresses the theoretical gap by thoroughly analyzing the influence of signal strength on exact support recovery and statistical inference in model \eqref{orireg}.
From a non-asymptotic perspective, we establish the necessary and sufficient minimum signal conditions for achieving exact recovery in the double sparse space.

\subsection{Related work}

\paragraph{Double sparse model}
The double sparse structure is of practical significance, attracting extensive research in both algorithmic development \cite{liang2023sparsegl, Breheny15} and theoretical exploration \cite{CZ22, LZ22, LZ23}.
Theoretically, \cite{CZ22, LZ22} have established minimax lower bounds for signal estimation in the double sparse model \eqref{orireg} as 
\begin{equation}\label{eq:gener}
 \inf_{\hat \beta} \sup_{\beta^* \in {\Theta}(s,s_0)} \mathbf E \left( \| \hat \beta - \beta^*\|_2^2 \right) \gtrsim \frac{\sigma^2}n \big( s \log({m}/{s}) + ss_0 \log(d/s_0) \big),
\end{equation}
where the infimum is taken over all possible estimators $\hat \beta$ based on $(Y,X) \in \mathbb R^{n} \times \mathbb R^{n \times p}$, and $\gtrsim $ denotes the inequality up to an absolute constant.
\cite{CZ22, LZ23} provided theoretical analyses of the Sparse Group Lasso (SGLasso), which is introduced by \cite{simon13SGL}. 
This algorithm combines the Lasso penalty \cite{tib96lasso} with the group Lasso penalty \cite{yuan2006model}, integrating both element-wise and group-wise sparsity.
\cite{zhang2024minimax} proposed a tuning-free Double Sparse Iterative Hard Thresholding (DSIHT) algorithm and showed that the obtained estimator is minimax optimal.
However, as \cite{FanLi01,fan04ncp, Zou01122006} pointed out, under proper conditions, a good estimator should possess exact recovery and asymptotic normality, i.e., 
\begin{equation}\label{eq: op}
\text{supp}(\hat{\beta}) := \text{supp}(\beta^*),\quad \sqrt n\left( \hat{\beta}_{S^*} - \beta^*_{S^*} \right) \overset{d} \to N(0 , \Sigma^*),
\end{equation}
where $\Sigma^*$ is the covariance matrix on the true support.
In general, these two properties in \eqref{eq: op} are called the \emph{oracle properties}. To the best of our knowledge, the oracle properties of the double sparse model are currently absent in existing works \cite{CZ22, LZ22, zhang2024minimax, LZ23}. Hence, further analysis of these properties in the double sparse model is necessary.
In the element-wise sparse linear model (assuming only $\|\beta^*\|_0 \leq s$), \cite{NM20} proposed an IHT-type algorithm that ensures minimax optimality and achieves exact support recovery.
This finding motivates us to investigate whether the IHT-type algorithm possesses oracle properties in the double sparse setting.


\paragraph{Support recovery}
The minimax optimality of exact recovery, i.e., variable selection consistency, is a heatedly discussed topic \cite{WJM07rec, WW10it, butucea17, CM18, butucea23group}. 
Specifically, it corresponds to a minimax separate rate $\mu$, where there exist some algorithms that can achieve exact support recovery if the minimum signal strength $ \min_{j \in S^*} |\beta^*_j| > C_1 \mu$, and no algorithm can ensure such selection consistency if $ \min_{j \in S^*} |\beta^*_j| < C_2\mu$, with $C_1 \geq C_2 > 0$ being two constants. In a $p$-dimensional Gaussian sequence model $X \sim N(\beta^*, \sigma^2 I_n)$ with only element-wise sparsity $\|\beta^*\|_0 \leq s$, \cite{CM18} identified the minimax separate rate for exact recovery as $\sqrt{2 \sigma^2 \log p}$, indicating that exact recovery is impossible if $ \min_{j: \beta^*_j \neq 0} |\beta_j|^* < \sqrt{2 \sigma^2 \log p}$.
Recent studies \citep{Zhengstoev20, Belitser22UQ, castillo24sharp} also explored this issue, emphasizing the necessity of minimum signal conditions for exact recovery. Additionally, \cite{MN19} revealed how support recovery enhances signal estimation from the perspective of the minimax estimation lower bound, highlighting the interaction between these two.
However, these studies concentrate on element-wise sparsity, and their conclusions cannot be directly extended to the double sparse space \eqref{eq:oriset}. 
Therefore, a detailed investigation into the exact recovery for the double sparse model is necessary.


In summary, based on the existing studies, we propose the following questions:
\begin{quote}
In the double sparse model \eqref{orireg}, can the IHT-type algorithm achieve exact support recovery? 
If so, are the minimum signal conditions required for this recovery minimax optimal?
Can these findings facilitate the oracle properties \eqref{eq: op}?
\end{quote}
This paper provides a theoretical analysis of the above questions and gives affirmative answers.

\subsection{Main contribution}\label{subsec:contribute}
This paper establishes the sufficient and necessary relationship between the exact support recovery and the optimal minimum signal conditions in the double sparse model. 
We demonstrate the oracle properties of the double sparse IHT procedure, further enhancing its theoretical foundation.
Specifically, the main contributions of this paper are summarized as follows:  

\begin{enumerate}
\item  Theoretically, we show that the double sparse iterative hard thresholding procedure possesses oracle properties \eqref{eq: op}, guaranteeing that its output achieves both exact support recovery and oracle asymptotic normality, under our proposed minimum signal conditions. 
This result confirms the sufficiency of these signal conditions for achieving exact recovery.
 
\item We analyze the minimax lower bounds for selection error based on Hamming risk. 
Results show that no method can achieve exact support recovery if the proposed minimum signal conditions are violated. 
This result confirms the necessity of these signal conditions and highlights that the double sparse IHT procedure achieves minimax rate-optimality in support recovery.
\end{enumerate}

The innovation of this paper lies in analyzing how signal strength influences the exact recovery and statistical inference of the double sparse linear model \eqref{orireg}. 
From a non-asymptotic framework, we show that a two-stage DSIHT estimator converges to the oracle estimator under rate-optimal signal conditions, emphasizing the theoretical superiority over convex panelized estimators such as the sparse group Lasso. 
Table \ref{table: contri} provides a comparison between double sparse IHT and sparse group Lasso algorithms.

\begin{table*}[t]
\caption{Comparison of Sparse group lasso and Double sparse IHT algorithms in the double sparse model.}
\label{table: contri}
\resizebox{\linewidth}{!}{\begin{tabular}{@{}cccc@{}}
\hline
  & \textbf{Minimax lower bound} & \textbf{Sparse group Lasso} & \textbf{Double sparse IHT} \\  \hline
\textbf{Signal estimation} & {Equation \eqref{eq:gener}}, Cai et al. \cite{CZ22}& {Rate-optimal}, Cai et al. \cite{CZ22} & {Rate-optimal}, Zhang et al.\cite{LZ23} \\
\textbf{Exact recovery} & {Theorems \ref{LB2priors} and \ref{th: LB group}}, ours & Unknown & Theorem \ref{T8.5}, ours  \\
\textbf{Asymptotic Normality}& NA & De-sparsified normality, \cite{CZ22} &  Theorem \ref{T9}, ours \\
&&(under more stringent conditions) & Distributed as oracle estimator \\
\hline
\end{tabular}
}
\end{table*}

Additionally, compared to the element-wise sparse model or the group-wise sparse model, the double sparse model provides richer information about the structure of $\beta^*$, thus implying the potential for more accurate outcomes (i.e., smaller sample complexity \citep*{CZ22}). 
However, analyzing the model with simultaneous group-wise and element-wise sparsity is far more difficult than simply combining the two separate types of sparsity. 
Specifically, to achieve exact support recovery, we need to analyze the signal conditions from both the group and element perspectives, examining how each signal condition reduces the estimation (and selection) error. 
This problem is more challenging than addressing either the element-wise \citep{CM18} or group-wise \citep{KM11group, butucea23group} cases individually.

\subsection{Organization} 
The rest of the paper is organized as follows:
Section \ref{sec:intro} establishes the problem and notations used throughout the paper.
Section \ref{sec:scaled} introduces the double sparse IHT Algorithm and proves its final estimator has oracle properties under suitable signal conditions.
Section \ref{sec:lower} establishes the minimax lower bound for support recovery based on Hamming loss in model \eqref{orireg}, showing that the minimum signal conditions required in Section \ref{sec:scaled} are minimax rate-optimal.
Section \ref{sec:num} presents numerical experiments to confirm our theoretical findings.
Section \ref{sec:con} contains the conclusion and possible extensions of our study.
Appendix \ref{appA}, \ref{lowerproof}, \ref{appC} and \ref{appD} provide the proof of our results.

\subsection{Notations and preliminaries}\label{sec: pre}
For the given sequences $a_n$ and $b_n$, we say that $a_n = O(b_n)$ when $a_n \le Cb_n$ for some constant $C>0$, while $a_n = o(b_n)$ corresponds to ${a_n}/{b_n} \rightarrow 0$ as $n \to \infty$.
We write $a_n \asymp b_n$ if $a_n = O(b_n)$ and $b_n  = O(a_n)$.
Let $[m]$ denote the set $\{1,2,\ldots,m\}$.
Let $x \vee y= \max\{x,y\} $, and $x \wedge y = \min \{x,y \}$.
For a vector $\beta$, let $\|\beta \|_2$ denote its Euclidean norm.
For a set $S$, let $|S|$ denote its cardinality.
We use $C_i$ to denote absolute constants, whose actual values vary from time to time.
Let $\mathbf{0}_d$ denote the $d$-dimensional zero vector.

We next introduce some more specific notations related to the double sparse model.
We use the double index $(i,j)$ to locate the $i$-th variable in the $j$-th group $G_j$ for $i \in [d], j \in [m]$ in the original parameter vector.
Under the given group structure, each element's location in a vector corresponds to a unique location with the double index; therefore, we will use these two notations interchangeably without further declaration.
For a fixed vector $\beta \in \mathbb R^p$, we denote by $\text{supp}(\beta)  := \{ (i,j) \in [d] \times [m]: \beta_{ij} \ne 0 \}$ the support set of $\beta$, and $G^*(\beta): = \{j \in [m]: \beta_{G_j}  \ne \mathbf 0_d \}$ the group index set of true support groups.
We refer to the coefficient vector $\beta$ as $(s,s_0)$-double sparse if $\beta \in {\Theta}(s,s_0)$. 
Denote by $\mathcal S (s,s_0):=\left\{\text{supp}(\beta):~ \beta \in{\Theta}(s,s_0) \right\} $ the space consisting of all the support set of ${\Theta}(s,s_0)$, that is, if $\beta \in{\Theta}(s,s_0)$, we say the support of $\beta$ belongs to $\mathcal S (s,s_0) $.
Furthermore, we denote by $\beta_S \in \mathbb R^{|S|}$ the subvector of $\beta$ indexed by the set $S$, where $S$ can be any subset of the index space $[d] \times [m]$.
{ Notably, our results can also be considered as $n \rightarrow \infty$ when all other parameters of the problem, i.e., $d, m, s$, and $s_0$, depend on $n$ in such a way that $d = d(n) \rightarrow \infty$.
For brevity, the dependence of these parameters on $n$ will be further omitted in the notation.}


To facilitate our technical derivation, the design matrix is standardized as $\| X_{(ij)} \|_2  = \sqrt{n}$, where $X_{(ij)}\in \mathbb R^n$ denotes the corresponding observation vector of the variable $(i,j)$, for all $(i,j) \in [d] \times [m]$.
Furthermore, we introduce a fundamental assumption for the design matrix $X$, termed the Double Sparse Restricted Isometry Property (DSRIP).
This assumption is originally introduced in \cite{LZ22} and is an extension of the standard RIP \citep{candes2006robust} into the double sparse space.
\begin{definition}[DSRIP condition]\label{df2}
We say that $X \in \mathbb R^{n\times p}$ satisfies the Double Sparse Restricted Isometry Property $DSRIP(as,bs_0, \delta)$ with a constant $0 < \delta < 1$, if and only if 
\begin{equation}
    n(1-\delta)\|u\|_2^2  
\leq \left\|X_{S} ~ u\right\|_2^2 
\leq n(1+\delta)\|u\|_2^2
\end{equation}
holds for all $ S \in \mathcal S(as,bs_0)$ and $ u \in  \mathbb{R}^{|S|}\setminus \{\mathbf 0_{|S|} \}$, where $a,b>0$ are two constants, and $X_S \in \mathbb R^{n \times |S|}$ is the design matrix of the variables indexed by $S$.
\end{definition}
The DSRIP condition is less strict compared to the RIP condition when considering the double sparsity. 
Specifically, taking $a=b=1$, DSRIP requires an isometry property for all $(s, s_0)$-double sparse vectors. In contrast, RIP requires the satisfaction of all $ss_0$-sparse vectors.
The analyses in this paper are based on the fixed design matrix $X$ satisfying the DSRIP-type condition.

\section{The double sparse IHT algorithm and oracle properties}\label{sec:scaled}
The Iterative Hard Thresholding (IHT) algorithm is an effective method that plays a significant role in compressed sensing \cite{BLUMENSATH2009265, jain2014iterative, yuan18grad, liubarber19}. 
This section illustrates the theoretical properties of the IHT-type algorithm regarding exact support recovery and asymptotic distribution in the double sparse model.
First, Section \ref{3.1} introduces the DSIHT Algorithm \ref{IHT} proposed by \cite{zhang2024minimax}, illustrating that it does not address the analysis of oracle properties. 
Section \ref{3.2} then introduces a two-stage DSIHT Algorithm \ref{scaledIHT} that refines the output of Algorithm \ref{IHT}. 
We show that the final estimator from Algorithm \ref{scaledIHT} achieves a sharper estimation rate and exhibits oracle properties under suitable minimum signal conditions, thereby filling a theoretical gap in the double sparse regression model.

The analyses in this section are based on the fixed design matrix $X$ satisfying the DSRIP condition (see Definition \ref{df2}).
For clarity, in this section, we set the coefficient vector $\beta^* \in {\Theta}(s,s_0)$ in model \eqref{orireg} as fixed.
We denote its support group index set as $G^*=G^*(\beta^*)$ and its support set as $S^*= S^*(\beta^*)$.

\subsection{One-stage algorithm: minimax optimal estimation}\label{3.1}
First, we introduce the double sparse thresholding operator $\mathcal T_{\lambda,s_0}:\mathbb R^p \to \mathbb R^p$ proposed by \cite{zhang2024minimax}. 
This operator comprises the following two steps:
\begin{enumerate}
    \item The element-wise operator:
        \begin{equation}\label{element}
             \left\{ \mathcal T_\lambda^{(1)}(\beta)\right\}_{ij} = \beta_{ij} \mathbf 1(|\beta_{ij}| \ge \lambda) \in \mathbb R.
        \end{equation}
    \item The group-wise operator:
        \begin{equation}\label{group}
             \left\{ \mathcal T_{\lambda,s_0}^{(2)}(\beta)\right\}_{G_j} 
= \beta_{G_j} \mathbf1 \left( \left\| \beta_{G_j} \right\|_2^2 \ge s_0 \lambda^2  \right) \in \mathbb R^{d}.
\end{equation}
\end{enumerate}
Then, the double sparse hard thresholding operator can be described as $\mathcal T_{\lambda,s_0}:= \mathcal T_{\lambda,s_0}^{(2)} \circ \mathcal T_{\lambda}^{(1)}$. 
Specifically, for each group $G_j$, if $\sum_{i=1}^{d} \left\{ \hat \beta_{ij}^2 \mathbf 1(|\hat \beta_{ij}| \ge \lambda) \right\} \ge s_0\lambda^2$, group $G_j$ will be estimated as a support group and $\left\{ \mathcal T_{\lambda,s_0}(\hat\beta) \right\}_{ij} = \hat\beta_{ij} \mathbf 1(|\hat\beta_{ij}| \ge \lambda)$ for each $ (i,j) \in G_j$.
Conversely, if $\sum_{i=1}^{d} \left\{ \hat\beta_{ij}^2 \mathbf 1(|\hat\beta_{ij}| \ge \lambda) \right\} < s_0\lambda^2$, group $G_j$ will be estimated as a non-support group, therefore $\left\{ \mathcal T_{\lambda,s_0}(\hat\beta) \right\}_{G_j} = \mathbf 0_d$. 

\begin{algorithm}[htbp]
\caption{DSIHT (Double Sparse IHT)}\label{IHT} 
\begin{algorithmic}[1]
    \REQUIRE $X,\ Y,\ \{G_j\}^m_{j=1},\ \kappa,\ \delta,\ \lambda_{(\infty)},\ s_0, \ \sigma$
    \STATE Initialize $t=0$, $\lambda_{(0)} = \frac{ \|X^\top Y /n\|_\infty + \sqrt{10\sigma^2 (\log (dm))/n}}{ \sqrt2 \kappa}$ and $\hat \beta^0 = \mathbf 0_p$ 
    \WHILE {$t \le  \left \lceil \log ( \lambda_{(\infty)} / \lambda_{(0)} )/ {\log \kappa} \right \rceil,$}
    \STATE ${\hat \beta}^{t+1} = \mathcal{T}_{\lambda_{(t)}, s_0}\left({\hat \beta}^{t} + \frac1n X^\top(Y-X{\hat \beta}^{t})\right)$
    \STATE $\lambda_{(t+1)} = \left(\kappa \lambda_{(t)}\right) \vee \lambda_{(\infty)}$
    \STATE $t = t+1$
    \ENDWHILE
    \ENSURE $\hat \beta^{t}$
  \end{algorithmic}
\end{algorithm}

Leveraging the operator $\mathcal T_{\lambda,s_0}$, \cite{zhang2024minimax} introduced the Double Sparse Iterative Hard Thresholding (DSIHT) Algorithm \ref{IHT}, a gradient-descent-based procedure enforcing both group-wise and element-wise sparsity. 
At iteration $t$, the threshold parameter follows $\lambda_{(t)} = \max\{\kappa^t \lambda_{(0)},~\lambda_{(\infty)}\}$, where $\kappa\in(0,1)$ governs a decay from the initial value $\lambda_{(0)}$ to the floor $\lambda_{(\infty)}$.
We next show that, by selecting $\lambda_{(\infty)}$ appropriately, this dynamic regularization strategy guarantees that the output of Algorithm \ref{IHT} processes a double sparse structure, and its $\ell_2$ error is effectively bounded.

For ease of display, we denote by $\{\hat \beta^t\}$ the estimation sequence obtained from Algorithm \ref{IHT}.
Define the element-wise decoder $\eta_{ij}(\beta) = \mathbf 1 (\beta_{ij} \ne 0)$ and the group-wise decoder $\left(\eta_G\right)_j(\beta) = \mathbf1( \beta_{G_j} \ne \mathbf 0_d)$, for every $\beta \in \mathbb R^p $ with group structure $G_1, \cdots G_m$, and every $(i,j)\in[d] \times [m]$. 
Using these two decoders, we can characterize the support recovery error both element-wise and group-wise in terms of the corresponding Hamming losses.
{ Additionally, define
$$
    C_\lambda = C_\lambda(\kappa, \delta) := \sqrt{40} \times \frac{\kappa + (\sqrt3 -1)\delta }{\kappa - \delta}
    ,\quad A = A(\kappa, \delta ) := \frac{8 \delta^2}{(\kappa - \delta)^2},
$$
and 
$$
\Delta(s,s_0) : = (1/s_0) \cdot \log(em/s) + \log(ed/s_0).
$$
}


{
\begin{theorem}[Estimation upper bound]\label{upperbound}
    Assume that the design matrix $X$ satisfies DSRIP$\left((1+2A)s,\frac{1+4A}{1+2A}s_0, \delta \right)$ (see Definition \ref{df2}) with $\delta \in (0,1)$.
    Assume that $ss_0 \Delta(s,s_0)= O(n)$ and $\kappa \in (\delta,1)$.
    Then, by taking $\lambda_{(\infty)} = C_\lambda \sigma \cdot \sqrt{\Delta(s,s_0)/n}$, with a probability greater than $1-\exp\left\{-(A \wedge 1) ss_0\Delta(s,s_0)/3 \right\}$, for every $ t \ge 0$ we have the following properties:
    \begin{enumerate}
        \item The estimator $\hat \beta^t$ achieves sparse group selection as
        $$
         \sum_{j=1}^m  \left| \left(\eta_G\right)_j( \hat\beta^t ) - \left(\eta_G\right)_j\left( \beta^* \right) \right|\le (A+1) s.
        $$
        \item The estimator $\hat \beta^t$ achieves sparse element selection as
        $$
         \sum_{j=1}^m \sum_{i=1}^{d} \left| \eta_{ij}( \hat\beta^t ) - \eta_{ij}\left( \beta^* \right) \right|\le (2A+1) ss_0.
        $$
        \item The estimator $\hat \beta^t$ achieves an upper bound as 
        $$
        \|\hat \beta^{t} - \beta^* \|_2  \le \left( 1- \frac{\sqrt{10}}{C_\lambda}\right) \frac{2\sqrt2 \kappa}{\kappa-\delta} \cdot \sqrt{ss_0} \lambda_{(t)} .
        $$
    \end{enumerate}
\end{theorem}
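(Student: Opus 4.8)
The plan is to run a coupled induction on the iteration index $t$, carrying all three properties together, with a one-step analysis of the gradient-plus-threshold update as the engine. Writing the gradient step as $b^{t+1} := \hat\beta^t + \frac1n X^\top(Y - X\hat\beta^t)$ and substituting $Y = X\beta^* + \sigma\xi$ yields the identity
$$b^{t+1} - \beta^* = \Big(I - \tfrac1n X^\top X\Big)(\hat\beta^t - \beta^*) + \tfrac\sigma n X^\top\xi,$$
which cleanly separates a contraction term — controlled through DSRIP — from an additive noise term. Before inducting I would fix a good event $\mathcal E$ on which the noise is uniformly small: the maximal subgaussian inequality controls $\|\tfrac\sigma n X^\top\xi\|_\infty$, and a union bound over all supports $S\in\mathcal S(as,bs_0)$ controls $\|(\tfrac\sigma n X^\top\xi)_S\|_2$ simultaneously. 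The combinatorial count $\binom{m}{s}\binom{d}{s_0}^s$ of admissible double-sparse supports is exactly what produces the rate $\Delta(s,s_0)=s_0^{-1}\log(em/s)+\log(ed/s_0)$ and the failure probability $\exp\{-(A\wedge1)ss_0\Delta(s,s_0)/3\}$.

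The heart of the argument is the one-step lemma: assuming the estimation bound (property 3) at level $t$, I would first establish the two selection bounds at level $t+1$ and then the estimation bound at level $t+1$. For the group count I use an energy argument: a false-positive group $j\notin G^*$ survives only if its element-thresholded block carries energy at least the floor $s_0\lambda_{(t)}^2$; summing over such groups, bounding the total excess energy by $\delta^2\|\hat\beta^t-\beta^*\|_2^2$ (DSRIP applied to the contraction term) plus the noise energy supplied by $\mathcal E$, and dividing by $s_0\lambda_{(t)}^2$, caps the false positives at $As$ groups, hence $(A+1)s$ in total. An analogous element-level count — in which each retained false-positive coordinate contributes at least $\lambda_{(t)}^2$ to the excess energy — yields the $2A\,ss_0$ bound on false-positive elements and thus property 2. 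Because the induction hypothesis scales the error as $\sqrt{ss_0}\,\lambda_{(t)}$ while the thresholds decay at rate $\kappa$, the ratio of excess energy to the per-unit floor is precisely what collapses to the constant $A = 8\delta^2/(\kappa-\delta)^2$, the factor $(\kappa-\delta)^{-2}$ coming from the coefficient in property 3.

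With the selection counts in hand, the supports $S^t:=\text{supp}(\hat\beta^t)$ and $S^{t+1}:=\text{supp}(\hat\beta^{t+1})$, their group indices $G^t,G^{t+1}$, and $S^*$ each carry at most $(A+1)s$ groups and $(2A+1)ss_0$ elements, so their union is covered by DSRIP$((1+2A)s,\frac{1+4A}{1+2A}s_0,\delta)$ — this is exactly why those inflated sparsity levels appear in the hypothesis. Restricting the identity above to this union and decomposing the hard-thresholding error over $S^*\cap S^{t+1}$ (entries kept verbatim), the false negatives $S^*\setminus S^{t+1}$ (discarded entries, each below $\lambda_{(t)}$), and the false positives, I would reach a recursion $\|\hat\beta^{t+1}-\beta^*\|_2 \lesssim \delta\,\|\hat\beta^t-\beta^*\|_2 + \sigma\sqrt{ss_0\,\Delta(s,s_0)/n}$. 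Since $\lambda_{(t)}$ decays geometrically at rate $\kappa>\delta$ toward the floor $\lambda_{(\infty)}=C_\lambda\sigma\sqrt{\Delta(s,s_0)/n}$, the contraction term shrinks faster than the target scale $\sqrt{ss_0}\,\lambda_{(t)}$ while the additive noise never exceeds it; carrying the exact constants reproduces the stated coefficient $(1-\sqrt{10}/C_\lambda)\frac{2\sqrt2\kappa}{\kappa-\delta}$. The base case $t=0$ is immediate: with $\hat\beta^0=\mathbf0_p$, property 3 reduces to $\|\beta^*\|_2\le(\cdots)\sqrt{ss_0}\,\lambda_{(0)}$, which holds on $\mathcal E$ because $\sqrt2\kappa\lambda_{(0)}=\|X^\top Y/n\|_\infty+\sqrt{10\sigma^2\log(dm)/n}$ dominates $\|\beta^*\|_\infty\ge\|\beta^*\|_2/\sqrt{ss_0}$ via the DSRIP lower bound on $X^\top X\beta^*/n$.

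I expect the principal obstacle to be the simultaneous group-and-element bookkeeping in the energy argument. The group-wise threshold couples the $d$ coordinates inside each block, so the excess-energy accounting that bounds false positives must be run jointly at both scales while keeping the DSRIP constant $\delta$ uniform over supports that change from iterate to iterate. Arranging the two counts so that the element-level and group-level bounds close under the same induction hypothesis — without the inflation of one sparsity level forcing an uncontrolled inflation of the other — is the delicate step; by contrast the noise concentration, though technical, is routine.
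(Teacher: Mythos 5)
Your proposal follows essentially the same route as the paper's proof: the same gradient-step decomposition $\hat H^{t+1}=\beta^*+\Phi(\beta^*-\hat\beta^t)+\Xi$, the same uniform noise event obtained by a union bound over double-sparse supports (the paper's Lemma \ref{good event}), a coupled induction carrying the $\ell_2$ bound together with the two support-shape bounds, an energy-versus-threshold comparison to cap false positives at $As$ groups and $2Ass_0$ elements, the same geometric-decay recursion, and the same validation of $\lambda_{(0)}$ in the base case.

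One step needs repair, however. Your false-positive count is phrased as: sum the excess energy over \emph{all} surviving false groups, bound it by $\delta^2\|\hat\beta^t-\beta^*\|_2^2$ plus noise via DSRIP, and divide by the floor $s_0\lambda_{(t)}^2$. Read literally this is invalid before the count is established: DSRIP$\left((1+2A)s,\frac{1+4A}{1+2A}s_0,\delta\right)$ and the uniform noise bound apply only to supports inside the stated sparsity budget, so they cannot be invoked on the union of an a priori unbounded collection of false-positive groups. The paper resolves exactly this by arguing by contradiction: if more than $As$ groups (resp.\ more than $Ass_0$ elements) were falsely discovered, one selects exactly $As$ of them to form a set $S_{OG}'\in\mathcal S(As,s_0)$ whose union with $\mathrm{supp}(\hat\beta^t-\beta^*)$ and $S^*$ still lies within the DSRIP budget, and then shows its energy is simultaneously $\ge Ass_0\lambda_{(t+1)}^2$ and $<Ass_0\lambda_{(t+1)}^2$. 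With that standard subset-selection device your argument closes. A secondary, smaller omission: in the $\ell_2$ recursion you attribute false negatives only to entries falling below the element threshold $\lambda_{(t)}$, but a true entry can also be discarded when it passes the element threshold while its group fails the group-level floor $s_0\lambda_{(t)}^2$; accounting for both discard modes is what produces the paper's $\sqrt{2ss_0}\,\lambda_{(t+1)}$ term and hence the $2\sqrt2$ in the final constant.
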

}

Although \cite{zhang2024minimax} established a result akin to Theorem \ref{upperbound}, we restate it here for clarity and to support our subsequent analysis in Section \ref{3.2}:
Theorem \ref{upperbound} proves that the support set of $\hat{\beta}^t$ remains double sparse, that is, $\hat{S}^t \in \mathcal{S}\left( (1+A)s,\frac{1+2A}{1+A}s_0 \right)$ holds for all $t \ge 0$, where the definition of $\mathcal{S}$ follows from Section \ref{sec: pre}. 
By terminating the iterations at $t = \left \lceil \log ( \lambda_{(\infty)} / \lambda_{(0)} )/ {\log \kappa} \right \rceil $, the output of Algorithm \ref{IHT} satisfies
\begin{equation}\label{eq: 1st rate}
\|\hat{\beta}^{t} - \beta^* \|_2^2 \le 
80\cdot \frac{\left[ \kappa + (2\sqrt3 -1)\delta \right]^2 \kappa^2}{(\kappa- \delta)^4}\cdot \frac{ \sigma^2\left[ s \log({em}/{s}) + ss_0 \log(ed/s_0) \right]}n,
\end{equation}
which meets the rate in minimax lower bound \eqref{eq:gener}, demonstrating that Algorithm \ref{IHT} is rate-optimal. 
{Moreover, Theorem \ref{upperbound} allows the DSRIP parameter $\delta$ to range freely over $(0,1)$, and provides an explicit characterization of how both $\kappa$ and $\delta$ influence the sparse pattern and estimation error of $\hat{\beta}^{t}$.
This refined and quantitative insight into the dynamic regularization path goes beyond the earlier studies \cite{LZ22, zhang2024minimax}.
}

\subsection{Two-stage algorithm: oracle properties}\label{3.2}
While the first-stage Algorithm \ref{IHT} yields an estimator that achieves the minimax estimation rate, it tends to omit some true support variables and fall short in terms of estimation accuracy in practice.
Furthermore, Theorem \ref{upperbound} does not ensure exact recovery results, nor does it provide the asymptotic distribution of its estimator.
Next, we will delve deeper into the analysis of the aforementioned issues.

{The first goal in this subsection is to obtain an estimator that achieves exact support recovery.
To realize this, we propose the two-stage DSIHT Algorithm \ref{scaledIHT} with a fixed thresholding parameter $\mu$, which will be clarified in Proposition \ref{T8}.
Specifically, Algorithm \ref{scaledIHT} uses the output of Algorithm \ref{IHT} as the initial input $\tilde\beta^{0}$, and then performs iterative updates in the second stage, producing the estimation sequence $\{\tilde\beta^{t}\}_{t\ge0}$. 
The essence of Algorithm \ref{scaledIHT} lies in the utilization of the double sparse thresholding operator $\mathcal T_{\mu,s_0}$, which is derived similarly to \eqref{element} and \eqref{group}.

\begin{algorithm}[htbp]
\caption{Two-stage DSIHT}\label{scaledIHT}  
\begin{algorithmic}[1]
    \REQUIRE $X,\ Y,\ \{G_j\}^m_{j=1},\ \kappa,\ \delta,\ \lambda_{(\infty)},\ \mu,\ s_0, \ \sigma$ 
    \STATE Initialize $t=0$ 
    \STATE  $\tilde \beta^0 =$ DSIHT($X,\ Y,\ \{G_j\}^m_{j=1},\ \kappa,\ \delta,\ \lambda_{(\infty)},\ s_0, \ \sigma$ ) \hfill // first-stage Algorithm \ref{IHT} 
    \WHILE {$t\le C \log n,\ $}
    \STATE ${\tilde \beta}^{t+1} = \mathcal{T}_{\mu, s_0}\left({\tilde \beta}^{t} + \frac1n X^\top(Y-X{\tilde \beta}^{t})\right)$
    \hfill // second-stage iteration with a fixed $\mu$
    \STATE $t = t+1$
    \ENDWHILE
    \ENSURE $\tilde \beta^{t}$ 
  \end{algorithmic}
\end{algorithm}

For the analysis of exact recovery, let the oracle estimator $\tilde\beta^*$ be defined by
$$
\tilde \beta^*_{S^*} = (X_{S^*}^\top X_{S^*})^{-1} X_{S^*}^\top Y,
\qquad
\tilde\beta^*_{(S^*)^c} =\mathbf0,
$$
where $X_{S^*}\in\mathbb R^{n\times|S^*|}$ contains the columns of $X$ indexed by the true support $S^*$, and $(S^*)^c$ is its complement.
The next proposition quantifies the deviation $\tilde\beta^t - \tilde\beta^*$ produced by Algorithm \ref{scaledIHT} under suitable element-wise and group-wise minimum signal conditions.
Recall $\Delta(1,s_0) = (1/s_0) \cdot \log(em ) + \log(ed/s_0)$.
}

\begin{proposition}[Convergence to $\tilde \beta^*$]\label{T8}
{Assume that the design matrix $X$ satisfies DSRIP$\left((1+2A)s,\frac{1+4A}{1+2A}s_0, \delta \right)$ condition with $\delta \in (0,1)$, and $ss_0 \Delta(s,s_0)= O(n)$.
We choose the fixed thresholding parameter $\mu$ as 
\begin{equation}\label{eq: mu}
    \mu =\max\left\{ \frac{\kappa C_\lambda}{\delta}, ~
    \sqrt{40+ \frac{120\delta^2}{(1-\delta)^2}}\right\}\cdot \sqrt{\frac{ \sigma^2}n \left\{ \frac{\log(em)}{s_0} + \log(esd) \right\} } ,
\end{equation}
and assume that both the element-wise and the group-wise minimum signal conditions 
\begin{equation}\label{eq: betamin}
\begin{aligned}
    &\min_{(i,j)\in S^*}|\beta^*_{ij}| \ge \left( 2 + \frac{\sqrt6 \delta}{1-\delta} \right) \mu,\\
    &\min_{j \in G^*}\|\beta^*_{G_j}\|_2 \ge 
     \left( 2 + \frac{\sqrt6 \delta}{1-\delta} \right) \sqrt{s_0} \mu 
\end{aligned}
\end{equation}
hold. 
Then, with probability greater than $1-O\left( e^{-\frac13 \left\{ \Delta(1,s_0)+ \log(ss_0) \right\} } \right)$, we have 
\begin{equation}\label{eq:convergence} 
\|\tilde \beta^t - \tilde \beta^* \|_2 \le\left\{ \sqrt{5/6} + \left(1-\sqrt{5/6} \right) \delta \right\}^t \cdot \|\tilde \beta^0 - \tilde \beta^* \|_2, \quad \text{for every } t \ge 0.
\end{equation}
}
\end{proposition}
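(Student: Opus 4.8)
The plan is to prove \eqref{eq:convergence} by induction on $t$, showing that on a suitable high‑probability event the second‑stage map $\Phi(\beta):=\mathcal T_{\mu,s_0}\!\big(\beta+\tfrac1n X^\top(Y-X\beta)\big)$ is a genuine contraction toward the oracle $\tilde\beta^*$ with modulus $\rho:=\sqrt{5/6}+(1-\sqrt{5/6})\delta<1$. The conceptual cornerstone is that $\tilde\beta^*$ is an \emph{exact fixed point} of $\Phi$ on this event: since $\tilde\beta^*_{S^*}=(X_{S^*}^\top X_{S^*})^{-1}X_{S^*}^\top Y$, the oracle residual $R^*:=Y-X_{S^*}\tilde\beta^*_{S^*}$ is orthogonal to every column of $X_{S^*}$, so the gradient step leaves $\tilde\beta^*$ unchanged on $S^*$; off $S^*$ the step produces only the noise projection $\tfrac1n X_{(ij)}^\top R^*$, which the thresholding kills provided $\mu$ dominates this noise, while the signal conditions \eqref{eq: betamin} guarantee that no true element or group of $\tilde\beta^*$ is thresholded away. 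Granting $\Phi(\tilde\beta^*)=\tilde\beta^*$, the bound \eqref{eq:convergence} follows from a one‑step estimate $\|\Phi(\tilde\beta^t)-\tilde\beta^*\|_2\le\rho\|\tilde\beta^t-\tilde\beta^*\|_2$ and iteration.

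For the one‑step estimate I would first record the exact affine form of the gradient step. Writing $r:=\tilde\beta^t-\tilde\beta^*$ and $b:=\tilde\beta^t+\tfrac1n X^\top(Y-X\tilde\beta^t)$, one gets $b-\tilde\beta^*=(I-\tfrac1n X^\top X)\,r+\tfrac1n X^\top R^*$. Restricting to the combined support $U:=S^*\cup\mathrm{supp}(\tilde\beta^t)\cup\mathrm{supp}(\tilde\beta^{t+1})$, which stays within the double‑sparse class covered by the DSRIP hypothesis of Proposition \ref{T8} so that $\|I-\tfrac1n X_U^\top X_U\|_{\mathrm{op}}\le\delta$, and using the oracle orthogonality $\tfrac1n X_{S^*}^\top R^*=0$, the ``kept'' part of $b-\tilde\beta^*$ is contracted by $\delta\|r\|_2$ plus an off‑support noise contribution confined to $U\setminus S^*$. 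Maintaining double‑sparsity of the iterates (needed for DSRIP on $U$) uses Theorem \ref{upperbound} for the initialization $\tilde\beta^0$ together with the fact that $\mathcal T_{\mu,s_0}$ cannot create more than $O(s)$ surviving groups once $\|r\|_2$ is controlled.

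The delicate part is converting the bound on $(b-\tilde\beta^*)_U$ into a bound on $\|\mathcal T_{\mu,s_0}(b)-\tilde\beta^*\|_2$ through the two‑stage hard threshold. With $K:=\mathrm{supp}(\tilde\beta^{t+1})$, the identity $\|\tilde\beta^{t+1}-\tilde\beta^*\|_2^2=\sum_{(i,j)\in K}(b_{ij}-\tilde\beta^*_{ij})^2+\sum_{(i,j)\in S^*\setminus K}(\tilde\beta^*_{ij})^2$ isolates two error sources: kept/false‑positive coordinates, bounded directly by $\|(b-\tilde\beta^*)_U\|_2^2$, and missed true coordinates $S^*\setminus K$. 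Each missed $(i,j)$ has $|b_{ij}|<\mu$ or lies in a group that failed the group test, so the transferred signal condition — $\min_{(i,j)\in S^*}|\tilde\beta^*_{ij}|$ and $\min_{j\in G^*}\|\tilde\beta^*_{G_j}\|_2$ remaining large after subtracting the oracle deviation $\tilde\beta^*-\beta^*$ — forces $(\tilde\beta^*_{ij})^2$ to be comparable to $(b_{ij}-\tilde\beta^*_{ij})^2$. Bookkeeping these contributions jointly across the element test $\mathbf1(|\cdot|\ge\mu)$ and the group test $\mathbf1(\|\cdot\|_2^2\ge s_0\mu^2)$, and optimizing against the constants in $\mu$ and in \eqref{eq: betamin}, is what yields the precise modulus $\rho=\sqrt{5/6}+(1-\sqrt{5/6})\delta$ rather than the naive $\delta$.

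Finally, I would discharge the probabilistic content. A subgaussian concentration argument for the element‑scale projections $\tfrac1n X_{(ij)}^\top R^*$ and the group‑scale aggregates $\tfrac1n\|X_{G_j}^\top R^*\|_2$, together with control of the oracle deviation $\tilde\beta^*-\beta^*=(X_{S^*}^\top X_{S^*})^{-1}X_{S^*}^\top\sigma\xi$ over the $\le ss_0$ support coordinates, yields the event of probability $1-O\big(e^{-\frac13\{\Delta(1,s_0)+\log(ss_0)\}}\big)$; the $\Delta(1,s_0)$ scale is exactly what makes $\mu$ in \eqref{eq: mu} exceed the off‑support noise while staying below the signal floor. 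On this event all deterministic steps hold simultaneously for every $t$, and the induction closes. I expect the main obstacle to be the coupled element‑and‑group thresholding analysis of the missed‑support term $S^*\setminus K$ — in particular ensuring that no true group is erroneously killed and extracting the sharp constant $\sqrt{5/6}$ — since this is where the two sparsity layers genuinely interact rather than decouple.
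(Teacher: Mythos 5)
Your proposal is correct and takes essentially the same route as the paper's proof: the same decomposition of the gradient step around the oracle estimator (with the projected noise $\tilde\Xi$ vanishing on $S^*$), the same induction that simultaneously maintains the $\ell_2$ contraction and the double sparsity of the falsely discovered sets via a thresholding contradiction under DSRIP on the union support, the same transfer of the signal conditions \eqref{eq: betamin} to $\tilde\beta^*$ to control the missed-support term, and the same sub-Gaussian event; the modulus $\sqrt{5/6}+(1-\sqrt{5/6})\delta$ indeed emerges, exactly as you anticipate, from bookkeeping the three noise contributions $\bigl(\tfrac13+\tfrac16+\tfrac13=\tfrac56\bigr)$ of $(1-\delta)^2\|\tilde\beta^t-\tilde\beta^*\|_2^2$. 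The only cosmetic difference is your framing around the fixed-point property of the second-stage map at $\tilde\beta^*$, which the paper never invokes and which could not carry the argument by itself (hard thresholding is not non-expansive, so no Lipschitz contraction-mapping argument is available); it is instead a consequence of the direct one-step estimate you also describe, which is what both you and the paper actually prove.
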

With the thresholding parameter $\mu$ in \eqref{eq: mu} and the minimum signal conditions \eqref{eq: betamin}, we deduce that the solution sequence $ \{\tilde \beta^t \}$ converges to the oracle estimator $\tilde \beta^*$ from a non-asymptotic perspective.
This result quantifies both the optimization error and the computational efficiency of our algorithm, and serves as the key point for the exact support recovery and sharper error bound, as shown below.

{
\begin{theorem}[Exact support recovery]\label{T8.5}
Assume all conditions in Proposition \ref{T8} hold. 
Then, with probability greater than $1-O\left( e^{-\frac13 \left\{ \Delta(1,s_0)+ \log(ss_0) \right\} } \right)$, for all $ t \ge C_\delta \log n$, $\tilde \beta^t$ achieves the exact support recovery at both the group-wise and element-wise levels
$$
 \sum_{j=1}^m  \left| \left(\eta_G\right)_j( \tilde\beta^t ) - \left(\eta_G\right)_j\left( \beta^* \right) \right| =0,
 \quad\sum_{j=1}^m \sum_{i=1}^{d} \left| \eta_{ij}( \tilde\beta^t ) - \eta_{ij}\left( \beta^* \right) \right|=0.
$$
With probability greater than $1-\epsilon-O\left( e^{-\frac13 \left\{ \Delta(1,s_0)+ \log(ss_0) \right\} } \right)$, it also achieves a sharper error bound as 
\begin{equation}\label{eq: sharper}
\left\| \tilde\beta^t - \beta^* \right\|_2 \le \sqrt{\frac{3\sigma^2 }{1-\delta}} \left( \sqrt{\frac{ss_0}{n}} + \sqrt{\frac{\log(1/\epsilon)}{n}} \right).
\end{equation}
\end{theorem}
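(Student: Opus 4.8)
The plan is to build directly on the geometric convergence $\|\tilde\beta^t-\tilde\beta^*\|_2\le\rho^t\|\tilde\beta^0-\tilde\beta^*\|_2$ established in Proposition \ref{T8}, where $\rho=\sqrt{5/6}+(1-\sqrt{5/6})\delta<1$. I would split the two assertions into separate tasks: for exact recovery I reduce the claim to showing that the thresholding operator $\mathcal T_{\mu,s_0}$ stabilizes the support to $S^*$ once the iterate is sufficiently close to the oracle $\tilde\beta^*$; for the sharper $\ell_2$ bound I reduce, via the triangle inequality, to controlling the oracle (OLS) error $\|\tilde\beta^*-\beta^*\|_2$, since $\|\tilde\beta^t-\tilde\beta^*\|_2$ is negligible for $t\gtrsim\log n$.

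For the recovery part, I would first analyze the pre-threshold vector at the oracle, $v^*:=\tilde\beta^*+\frac1n X^\top(Y-X\tilde\beta^*)$. The normal equations defining $\tilde\beta^*$ give $v^*_{S^*}=\tilde\beta^*_{S^*}$ (the gradient vanishes on $S^*$) and $v^*_{(S^*)^c}=\frac1n X_{(S^*)^c}^\top(Y-X_{S^*}\tilde\beta^*_{S^*})$. I would then show $\mathcal T_{\mu,s_0}(v^*)$ has support exactly $S^*$ with strict margins at both levels: off-support, a sub-gaussian/union-bound argument shows $\|v^*_{(S^*)^c}\|_\infty<\mu$, so every false-positive element, and hence every non-support group after element thresholding, is killed, with the calibration of $\mu$ in \eqref{eq: mu} supplying the budget for the element- and group-level union bounds (the $\log(esd)$ and $\log(em)/s_0$ terms); on-support, I combine the minimum signal conditions \eqref{eq: betamin} with per-coordinate and per-group bounds on $\tilde\beta^*-\beta^*$. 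The per-coordinate bound is the delicate point: decomposing $(X_{S^*}^\top X_{S^*}/n)^{-1}=I+\big((X_{S^*}^\top X_{S^*}/n)^{-1}-I\big)$, the identity part contributes a sub-gaussian coordinate of scale $\mu$, while the correction part is a sub-gaussian linear functional whose variance, bounded through DSRIP by $\tfrac{\delta^2(1+\delta)}{(1-\delta)^2}\cdot\tfrac{\sigma^2}{n}$, concentrates at scale $\tfrac{\sqrt6\delta}{1-\delta}\mu$; summing gives $|\tilde\beta^*_{ij}-\beta^*_{ij}|\le(1+\tfrac{\sqrt6\delta}{1-\delta})\mu$, so the condition $|\beta^*_{ij}|\ge(2+\tfrac{\sqrt6\delta}{1-\delta})\mu$ forces $|\tilde\beta^*_{ij}|\ge\mu$, and the analogous group computation forces $\|\tilde\beta^*_{G_j}\|_2\ge\sqrt{s_0}\mu$ for every support group. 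Finally I would propagate this margin from $v^*$ to the iterate's pre-threshold vector via $v^{t-1}-v^*=(I-\frac1n X^\top X)(\tilde\beta^{t-1}-\tilde\beta^*)$, whose coordinates on any double-sparse index set are bounded by $\delta\|\tilde\beta^{t-1}-\tilde\beta^*\|_2\le\delta\rho^{t-1}\|\tilde\beta^0-\tilde\beta^*\|_2$ using DSRIP; choosing $C_\delta$ so that this falls below the $\Theta(\mu)$ margin for all $t\ge C_\delta\log n$ (possible since $\|\tilde\beta^0-\tilde\beta^*\|_2$ is at most polynomial in $n$ while $\rho^t$ decays geometrically) yields $\mathcal T_{\mu,s_0}(v^{t-1})$ with support exactly $S^*$, so both Hamming losses vanish.

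For the sharper error bound, on the event that the support equals $S^*$ the triangle inequality gives $\|\tilde\beta^t-\beta^*\|_2\le\|\tilde\beta^t-\tilde\beta^*\|_2+\|\tilde\beta^*-\beta^*\|_2$, and the first term is $o(\sqrt{ss_0/n})$ for $t\ge C_\delta\log n$. For the oracle error I would write $\tilde\beta^*_{S^*}-\beta^*_{S^*}=(X_{S^*}^\top X_{S^*})^{-1}X_{S^*}^\top\sigma\xi$ and use the DSRIP lower bound $\frac1n X_{S^*}^\top X_{S^*}\succeq(1-\delta)I$ to obtain $\|\tilde\beta^*-\beta^*\|_2^2\le\frac{\sigma^2}{n(1-\delta)}\|H\xi\|_2^2$, where $H$ is the rank-$|S^*|$ orthogonal projection onto the column space of $X_{S^*}$. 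A sub-gaussian concentration (Hanson--Wright) bound $\|H\xi\|_2\le\sqrt{ss_0}+\sqrt{2\log(1/\epsilon)}$ with probability at least $1-\epsilon$ then yields \eqref{eq: sharper} after absorbing the $\sqrt2$ and the negligible optimization term into the constant $\sqrt3$; this is the source of the extra $\epsilon$ in the stated probability.

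The main obstacle is the coupled element/group margin analysis: because the group decoder acts on the element-thresholded vector, a support group survives only if enough of its elements clear the element threshold, so the two minimum signal conditions must be shown to interact correctly, and --- crucially --- the per-coordinate (respectively per-group) oracle error must be pinned at the scale $\mu$ (respectively $\sqrt{s_0}\mu$) rather than the crude $\ell_2$ scale $\sigma\sqrt{ss_0/n}$. This requires the variance computation for the $\big((X_{S^*}^\top X_{S^*}/n)^{-1}-I\big)$ correction term together with a union bound, and is exactly what fixes the constant $\tfrac{\sqrt6\delta}{1-\delta}$ appearing in \eqref{eq: betamin}. Propagating the margin to all iterates and checking that the perturbation $v^{t-1}-v^*$ stays below it is then a routine consequence of Proposition \ref{T8} and DSRIP.
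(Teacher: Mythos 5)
Your second task (the sharper $\ell_2$ bound) is handled correctly and essentially as the paper does it: triangle inequality, geometric decay of $\|\tilde\beta^t-\tilde\beta^*\|_2$ from Proposition \ref{T8}, and a Hanson--Wright/Hsu--Kakade--Zhang bound on the oracle error combined with the DSRIP eigenvalue bound $\frac1n X_{S^*}^\top X_{S^*}\succeq(1-\delta)I$. Your on-support margin analysis (pinning $|\tilde\beta^*_{ij}-\beta^*_{ij}|$ at scale $\mu$ via the identity-plus-correction decomposition) and your propagation of perturbations through $(I-\frac1n X^\top X)(\tilde\beta^{t-1}-\tilde\beta^*)$ are also sound.

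The genuine gap is the off-support step: the claim that ``a sub-gaussian/union-bound argument shows $\|v^*_{(S^*)^c}\|_\infty<\mu$, so every false-positive element, and hence every non-support group after element thresholding, is killed.'' This cannot be established with the paper's calibration of $\mu$. The set $(S^*)^c$ contains up to $dm$ coordinates, each sub-Gaussian at scale $\sigma/\sqrt n$, so a union bound only yields a maximum of order $\sigma\sqrt{2\log(dm)/n}$; but $\mu\asymp\sigma\sqrt{\big(\log(em)/s_0+\log(esd)\big)/n}$ from \eqref{eq: mu} can be far smaller than this (take $s,d$ fixed with $s_0=d$ and $m\to\infty$: then $\mu^2 n/\sigma^2\asymp\log(m)/d+\log(sd)\ll\log(dm)$). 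So with high probability some off-support elements \emph{do} exceed the element threshold $\mu$, and your premise fails. The correct mechanism---and the reason $\mu$ may be taken this small---is the group-level threshold: even when individual off-support elements survive $\mathcal T^{(1)}_\mu$, no off-support group can accumulate thresholded energy $\ge s_0\mu^2$, by the $\chi^2$-type bound over double-sparse sets (event $\mathcal E(1,s_0)$ of Lemma \ref{good event}, exploited in Lemma \ref{exactOGChi2:1st}); and propagating this group-level margin under the perturbation $(I-\frac1n X^\top X)(\tilde\beta^{t-1}-\tilde\beta^*)$ is considerably more delicate than the $\ell_\infty$ propagation you describe (this is what Lemmas \ref{OGChi2:2nd} and \ref{OGChi2:3rd} handle). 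Note finally that the paper's own proof of Theorem \ref{T8.5} sidesteps any re-analysis of the thresholding: Proposition \ref{T8} gives $\|\tilde\beta^t-\tilde\beta^*\|_2<\sigma/n$ for $t\ge C_\delta\log n$; combined with $\max_{(i,j)\in S^*}|\tilde\beta^*_{ij}-\beta^*_{ij}|\le\mu/3$ (and $\tilde\beta^*=0$ off support) this yields $\|\tilde\beta^t-\beta^*\|_\infty<\mu/2$, and then the quantization property of the algorithm's output---every nonzero entry of $\tilde\beta^t=\mathcal T_{\mu,s_0}(\tilde H^t)$ has magnitude at least $\mu$---forces all off-support entries to be exactly zero, while the beta-min condition \eqref{eq: betamin} keeps all on-support entries nonzero. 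Your plan re-derives support stabilization that Proposition \ref{T8} already encapsulates, and it is precisely in that duplicated work that the gap appears.
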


Theorem \ref{T8.5} demonstrates that, under suitable element-wise and group-wise signal conditions, the output of Algorithm \ref{scaledIHT} can exactly recover the support set at both the group-wise and element-wise levels.
Section \ref{sec:lower} further demonstrates that our signal conditions \eqref{eq: betamin} are necessary in terms of rate. 
Therefore, we show that the double sparse IHT procedure achieves minimax optimality in both signal estimation \cite{zhang2024minimax} and support set recovery. 

Furthermore, under signal conditions \eqref{eq: betamin}, $\tilde\beta^t$ achieves the same rate of estimating $\beta^*$ as if its support were known.
This is a key advantage of our IHT-based method, which is generally difficult to achieve with convex procedures such as the sparse group Lasso \citep{CZ22}.

}

\begin{remark}[Initial estimator of Algorithm \ref{scaledIHT}]\label{remark: general}
    Although Algorithm \ref{scaledIHT} uses the output of Algorithm \ref{IHT} as the initial estimator $\tilde \beta^0$, in theory, it can be replaced with any estimator that achieves the minimax estimation rate \eqref{eq:gener}. 
    For example, the sparse group Lasso \cite{CZ22} or the sparse group slope \cite{LZ23} can serve as $\tilde\beta^0$, rendering our procedure broadly applicable.
    Therefore, the second-stage iteration in Algorithm \ref{scaledIHT} can be realized as a fast debiasing procedure: under signal condition \eqref{eq: betamin}, it removes the bias induced by regularization and ensures that $\tilde \beta^t$ converges to the oracle estimator. 
\end{remark}

\begin{remark}[A delicate proof technnique for support recovery]
To exactly recover the support set $S^*$, one typically needs to obtain an upper bound of $\| \hat \beta - \beta^*\|_{\infty}$, and in this context, incoherence conditions become inevitable \cite{JMLR:v7:zhao06a}.
Some studies avoided these requirements by leveraging the RIP condition, and using $\| \hat \beta - \beta^*\|_2$ to upper bound the $\ell_{\infty}$ error. 
However, those approaches often impose more stringent minimum signal conditions \cite{yuan18grad, Huang18}.
In Proposition \ref{T8} and Theorem \ref{T8.5}, we do not employ this technique and instead achieve exact recovery by proving the convergence of $\tilde \beta^t$ to the oracle estimator $\tilde \beta^*$.
Consequently, we can establish a tight $\ell_\infty$ error bound by using solely the (double sparse) RIP condition, without the need for the incoherence condition in \cite{CZ22}.
\end{remark}

{
\begin{remark}[Interpretation of the fixed threshold $\mu$]
The fixed threshold $\mu$, as defined in \eqref{eq: mu}, is chosen to dominate the statistical error with high probability: 
\begin{itemize}
    \item Outside the true support groups, according to Lemma \ref{exactOGChi2:1st}, a threshold of order $\sqrt{\frac{\sigma^2}{n} \left\{ \frac{\log m}{s_0} + \log(ed/s_0) \right\}}$ ensures that all statistical errors from non-support groups are excluded with high probability.
    \item  Within the true support groups, according to \eqref{eq: inside support group} in the proof of Proposition \ref{T8}, a threshold of order $\sqrt{\frac{\sigma^2}{n} \log(sd)}$ ensures that all statistical errors from the support groups are excluded with high probability.
\end{itemize}

Therefore, by selecting a threshold
$$
\mu \asymp\sqrt{\frac{\sigma^2}{n} \max \left\{ \frac{\log m}{s_0} + \log(ed/s_0),~  \log(sd)\right\} }
\asymp \sqrt{\frac{\sigma^2}{n} \left\{ \frac{\log m}{s_0} + \log(sd)  \right\} },
$$
we effectively filter out statistical errors arising from sub-Gaussian noise. 
Hence, the second stage could separate the signal from the noise and enforce convergence to the oracle estimator.
\end{remark}
}

{
\begin{remark}[Review the DSRIP constant]
Under our DSRIP framework, we do not impose the stringent requirement like $\delta< 0.11$ as in \cite{zhang2024minimax}; instead, we allow $\delta$ to range freely over $(0,1)$.
This generalization theoretically broadens the applicability of our two-stage DSIHT Algorithm \ref{scaledIHT}.
Moreover, our results explicitly quantify the influence of the DSRIP constant $\delta$: as $\delta$ increases to 1, the estimation error bounds \eqref{eq: 1st rate} and \eqref{eq: sharper} inflate, and the required signal strength \eqref{eq: betamin} for support recovery grows larger, while the convergence speed in \eqref{eq:convergence} (toward the oracle estimator) becomes slower.
These characterizations demonstrate how $\delta$ affects both statistical accuracy and computational efficiency. 

Moreover, by choosing a suitable learning rate in each gradient update step, we show that our DSRIP condition is equivalent to a double sparse Riesz condition, namely, the condition
$$
C_L\|u\|_2^2  \le \frac1n\bigl\|X_{S} ~u\bigr\|_2^2 \le C_U\|u\|_2^2,
 \text{ for every } S\in\mathcal S(C_3s,\,C_4s_0) \text{ and } u \in  \mathbb{R}^{|S|}\setminus \mathbf 0_{|S|},
$$
where $C_U\ge C_L>0$ are two arbitrary constants and $C_3,C_4>0$ depend on $C_U$ and $C_L$.
This equivalence weakens our original DSRIP assumption and ensures that our procedure can possess rate-optimal results under some random-design settings. 
Further technical details and proofs are provided in Appendix \ref{sec: random}.


\end{remark}
}

Given that Proposition \ref{T8} establishes the convergence of estimator $\tilde \beta^t$ to the oracle $\tilde \beta^*$, it is expected that $\tilde \beta^t_{S^*}$ is asymptotically normally distributed as $\tilde \beta^*_{S^*}$. 
For simplicity, we denote by $c_\xi$ the variance of $\xi_k$ for each $k \in [n]$, and 
{
denote by $B_{S^*}$ an upper bound on the row-wise $\ell_2$-norm of $X_{S^*} \in \mathbb R^{n \times |S^*|}$, i.e., $B_{S^*} := \max_{i\in[n]}\|X_{S^*}^{(i)}\|_2 $, where $X_{S^*}^{(i)} \in \mathbb R^{|S^*|}$ is the $i$-th observation of the covariates indexed by $S^*$.
}

\begin{theorem}[Asymptotic Normality]\label{T9}
{
Assume that all conditions in Proposition \ref{T8} hold and $ B_{S^*}^3= o_p(\sqrt n)$.
}
Then, for each fixed $K >0$ and each matrix $A \in \mathbb R^{K\times |S^*|}$, as $n, d, m\to \infty$, we have 
$$
\sqrt n A\left(\tilde \beta^t_{S^*} - \beta^*_{S^*} \right)
\to N \left( \mathbf0,~ c_\xi \sigma^2 A \left( \frac1n X_{S^*}^\top X_{S^*}\right)^{-1} A^\top\right).
$$
\end{theorem}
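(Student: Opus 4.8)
The plan is to leverage the exact support recovery from Theorem \ref{T8.5} to reduce the problem to an asymptotic normality statement about the oracle estimator $\tilde\beta^*$, and then apply a Lindeberg-type central limit theorem. First I would condition on the high-probability event from Theorem \ref{T8.5} on which $\tilde\beta^t$ exactly recovers $S^*$ and satisfies $\|\tilde\beta^t-\tilde\beta^*\|_2$-type control (via Proposition \ref{T8}). On this event, for $t\ge C_\delta\log n$ iterations, the restriction of $\tilde\beta^t$ to $S^*$ coincides with (or converges at a super-polynomial rate to) the oracle least-squares estimator $\tilde\beta^*_{S^*}=(X_{S^*}^\top X_{S^*})^{-1}X_{S^*}^\top Y$. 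Thus I would split
\begin{equation}\label{eq: normality split}
\sqrt n\,A\bigl(\tilde\beta^t_{S^*}-\beta^*_{S^*}\bigr)
= \sqrt n\,A\bigl(\tilde\beta^*_{S^*}-\beta^*_{S^*}\bigr) + \sqrt n\,A\bigl(\tilde\beta^t_{S^*}-\tilde\beta^*_{S^*}\bigr),
\end{equation}
and argue that the second term is $o_p(1)$: by Proposition \ref{T8} its $\ell_2$-norm contracts geometrically, so after $t\asymp\log n$ steps it is bounded by $n^{-c}\|\tilde\beta^0-\tilde\beta^*\|_2$ for some $c>0$, which kills the $\sqrt n$ factor since the initial deviation is only polynomially large.

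Next I would analyze the leading oracle term. Writing $Y=X_{S^*}\beta^*_{S^*}+\sigma\xi$ on the true support (using that $\beta^*$ is supported on $S^*$), a direct computation gives
\begin{equation}\label{eq: oracle residual}
\tilde\beta^*_{S^*}-\beta^*_{S^*} = \sigma\,(X_{S^*}^\top X_{S^*})^{-1}X_{S^*}^\top\xi,
\end{equation}
so that $\sqrt n\,A(\tilde\beta^*_{S^*}-\beta^*_{S^*}) = \sigma\sqrt n\,A(X_{S^*}^\top X_{S^*})^{-1}X_{S^*}^\top\xi$. Denote $M := \sigma\sqrt n\,A(X_{S^*}^\top X_{S^*})^{-1}X_{S^*}^\top \in\mathbb R^{K\times n}$, so the quantity of interest is $M\xi=\sum_{k=1}^n M_{\cdot k}\,\xi_k$, a sum of independent (rows of $\xi$) mean-zero random vectors. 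Its covariance is $c_\xi\,MM^\top = c_\xi\sigma^2\,n\,A(X_{S^*}^\top X_{S^*})^{-1}(X_{S^*}^\top X_{S^*})(X_{S^*}^\top X_{S^*})^{-1}A^\top = c_\xi\sigma^2\,A(\tfrac1n X_{S^*}^\top X_{S^*})^{-1}A^\top$, matching the stated limiting covariance. I would then verify the Lindeberg condition for the triangular array $\{M_{\cdot k}\xi_k\}$, reducing (via the Cram\'er--Wold device, taking an arbitrary linear combination $v^\top M\xi$ for $v\in\mathbb R^K$) to a scalar Lindeberg CLT for weighted sums of the $1$-subgaussian coordinates $\xi_k$.

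The main obstacle is controlling the individual term sizes to make Lindeberg's condition go through, and this is exactly where the hypothesis $B_{S^*}^3=o_p(\sqrt n)$ enters. The $k$-th summand has magnitude governed by the $k$-th column norm of $M$, which is $\sigma\sqrt n\,\|A(X_{S^*}^\top X_{S^*})^{-1}X_{S^*}^{(k)}\|_2$; using DSRIP to lower-bound the eigenvalues of $\tfrac1n X_{S^*}^\top X_{S^*}$ by $1-\delta$, this column norm is of order $\|X_{S^*}^{(k)}\|_2/\sqrt n\le B_{S^*}/\sqrt n$. The Lindeberg ratio then scales like $\sum_k \|M_{\cdot k}\|_2^{2+\gamma}/(\sum_k\|M_{\cdot k}\|_2^2)^{1+\gamma/2}$, and plugging $\gamma=1$ together with subgaussianity of $\xi_k$ (which furnishes the needed third absolute moment bound $\mathbb E|\xi_k|^3\le C$) produces a bound of order $B_{S^*}^3/\sqrt n=o_p(1)$. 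The delicate points are: (i) keeping the $o_p$ rather than $O$ statements straight, since $X$ is random under the random-design reading and $B_{S^*}$ is itself random, so I would condition on $X$ and verify the Lindeberg quantity tends to zero in probability; and (ii) confirming that the negligible optimization-error term in \eqref{eq: normality split} does not disturb the CLT, which follows because it is $o_p(1)$ deterministically on the good event and the good event has probability tending to one. Combining the vanishing second term, the explicit covariance computation, and the verified Lindeberg condition yields the claimed convergence $\sqrt n\,A(\tilde\beta^t_{S^*}-\beta^*_{S^*})\to N(\mathbf0,\,c_\xi\sigma^2 A(\tfrac1n X_{S^*}^\top X_{S^*})^{-1}A^\top)$ via the Cram\'er--Wold device.
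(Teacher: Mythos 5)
Your proposal is correct and follows essentially the same route as the paper's proof: decompose $\sqrt n\,A(\tilde\beta^t_{S^*}-\beta^*_{S^*})$ into the oracle term plus an optimization error that vanishes after $t\gtrsim\log n$ iterations (via the geometric contraction of Proposition \ref{T8}/Theorem \ref{T8.5}), reduce to a scalar statement by Cram\'er--Wold, and verify a third-moment condition using the DSRIP eigenvalue bounds and $B_{S^*}$ to obtain a ratio of order $B_{S^*}^3/\sqrt n\to0$. Your ``Lindeberg with $\gamma=1$'' check is exactly the Lyapunov condition the paper invokes, so the two arguments coincide in substance.
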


Theorem \ref{T9} illustrates that the final estimator $\tilde{\beta}^t$ on the true support has an asymptotic distribution identical to that of the oracle estimator $\tilde \beta^*_{S^*}$, potentially providing a solid foundation for statistical inference. 
Notably, Theorem \ref{T9} remains valid whether $ss_0$ is fixed or diverging, provided $ B_{S^*}^3= o_p(\sqrt n)$.

{
\begin{remark}[The rate of $B_{S^*}$]
In a fixed-design setting, the rate of $B_{S^*}$ is generally hard to examine. 
And hence we turn to the random-design setting for a better understanding. 
Assume that the $i$-th observation $X^{(i)}\stackrel{d}{=}\Sigma^{1/2}Z^{(i)} \in \mathbb R^{p}$, where $\Sigma \in \mathbb{R}^{p \times p}$ is the population covariance and $Z^{(1)},\cdots, Z^{(n)} \in \mathbb R^p$ are i.i.d. centered 1-sub-Gaussian random vectors such that $\mathbf E ( Z^{(i)} Z^{(i)\top} ) = I_p$.
In Appendix \ref{subsec: B}, we prove $B_{S^*} \lesssim \sqrt{ss_0 + \log n}$ with a probability greater than $1-1/n$, and therefore 
$$
\left\{ B_{S^*}^3 = o_p(\sqrt n)\right\} \Leftarrow \left\{ ss_0 = o_p(n^{1/3})\right\},
$$
where the right hand side gives a more intuitive sufficient condition on the design.

\end{remark}
}

Consequently, by integrating Theorems \ref{T8.5} and \ref{T9}, we conclude that, under proper signal conditions, the two-stage DSIHT Algorithm \ref{scaledIHT} performs as well as if the true support $S^*$ were known in advance, thereby exhibiting the oracle properties \eqref{eq: op}.
These findings enrich the theoretical properties of the double sparse IHT and highlight its superiority over the sparse group Lasso \cite{CZ22}.



\section{Minimax lower bounds of exact support recovery}\label{sec:lower}
In the previous section, we proved that the two-stage DSIHT Algorithm \ref{scaledIHT} attains exact support recovery at both the group and element levels under the following element-wise and group-wise minimum signal conditions:
\begin{equation}\label{eq: uppercondition}
\begin{aligned}
    &\min_{(i,j) \in S^*} |\beta^*_{ij}| \geq C \sqrt{\frac{\sigma^2}{n} \left( \frac{\log m}{s_0} + \log(sd) \right)}, \\
    &\min_{j \in G^*} \|\beta^*_{G_j}\|_2 \geq C \sqrt{\frac{\sigma^2}{n} \big(  \log m + s_0 \log(sd) \big)}.
\end{aligned}
\end{equation}
In this section, we investigate the necessity of these two conditions. 
{Concretely, we establish minimax lower bounds (under Hamming risk) to show that, if either the element-wise or the group-wise signal condition in \eqref{eq: uppercondition} fails, then no procedure can simultaneously recover the support set.
Thus, the conditions in \eqref{eq: uppercondition} are necessary for exact recovery in the double sparse model.}
For simplicity, we assume that $\xi_{k} \sim N(0,1)$ independently for every $ k \in [n]$.

\subsection{How element-wise signal strength affects recovery}
To clarify the role of minimum signal strength, we analyze the double sparse space in two parts, first focusing on element-wise signal strength and then on group-wise signal strength.
Consider a subspace of $\Theta(s,s_0)$ \eqref{eq:oriset} as 
\begin{equation}\label{omega element}  
{\Theta}_e(s,s_0,a): = \left\{ \beta \in {\Theta}(s,s_0) \left|~ \min_{(i,j) \in \text{supp}(\beta)}| \beta_{ij} | \ge a  \right. \right\},  
\end{equation}  
where $a > 0$ is a parameter that quantifies the element-wise minimum signal strength.
We then construct two subspaces of ${\Theta}_e(s,s_0,a)$ as 
$$
\begin{aligned}
\Theta_{e,1} &:= \left\{ \beta \in \Theta_e \left(s,s_0, a \right) ~\left|~
\begin{aligned}
    & G^*(\beta) = [s], \\
    & \beta_{ij} = a ~\text{ for every } (i,j) \in \operatorname{supp}(\beta)
\end{aligned} \right.\right\},\\
\Theta_{e,2} &:= \left\{ \beta \in \Theta_e\left(s,s_0, a \right) ~\left|~
\begin{aligned}\beta_{ij} =& a\text{ for every } i \in [s_0], j \in G^*(\beta), \\
\beta_{ij}=& 0\text{ otherwise} 
\end{aligned}
\right.\right\}.
\end{aligned}
$$
Specifically, in ${\Theta}_{e,1}$, the locations of support groups are specified (i.e., the first $s$ groups), while the locations of the support elements are undetermined. 
In ${\Theta}_{e,2}$, the locations of the support elements within each support group are specified (i.e., the first $s_0$ elements), while the locations of the support groups are undetermined.
The subspaces ${\Theta}_{e,1}$ and ${\Theta}_{e,2}$ provide a concrete understanding of the compositional structure of the double sparse space.
The subplots (a) and (b) in Figure \ref{fig:examples} provide brief examples of these subspaces.
\begin{figure}[htbp]
\centering
  \subfloat[One example in $\Theta_{e,1}$.]%
        {\begin{tikzpicture}[scale = 0.5]
    \draw[color=black,
        pattern={mylines[size= 5pt,line width=2pt,angle=45,pat=thick]},
        pattern color=black] (0,3) rectangle (1,6);
    \draw[color=black,
        pattern={mylines[size= 5pt,line width=2pt,angle=45,pat=thick]},
        pattern color=black] (1,1) rectangle (2,5);
    \draw[color=black,
        pattern={mylines[size= 5pt,line width=2pt,angle=45,pat=thick]},
        pattern color=black] (2,1) rectangle (3,2);
    \draw[color=black,
        pattern={mylines[size= 5pt,line width=2pt,angle=45,pat=thick]},
        pattern color=black] (2,5) rectangle (3,6);
      \node[] at (0.5,-0.5) {$G_1$};
      \node[] at (1.5,-0.5) {$G_2$};
      \node[] at (2.5,-0.5) {$G_3$};
      \node[] at (3.5,-0.5) {$G_4$};
      \node[] at (4.5,-0.5) {$G_5$};
      \node[] at (5.5,-0.5) {$G_6$};
      \node[] at (6.5,-0.5) {$G_7$};
      \node[] at (7.5,-0.5) {$G_8$};
      \draw[step=1,color=gray] (0,0) grid (8,6); 
      \node[] at (4,7) {$\text{supp}(\beta_1)$ with $\beta_1 \in {\Theta}_{e,1}$};
      \end{tikzpicture} }
\hspace{1em}
 \subfloat[One example in $\Theta_{e,2}$.]%
         {\begin{tikzpicture}[scale = 0.5]
    \draw[color=black,
        pattern={mylines[size= 5pt,line width=2pt,angle=45,pat=thick]},
        pattern color=black] (1,3) rectangle (2,6);
    \draw[color=black,
        pattern={mylines[size= 5pt,line width=2pt,angle=45,pat=thick]},
        pattern color=black] (4,3) rectangle (5,6);
    \draw[color=black,
        pattern={mylines[size= 5pt,line width=2pt,angle=45,pat=thick]},
        pattern color=black] (6,3) rectangle (7,6);
      \node[] at (0.5,-0.5) {$G_1$};
      \node[] at (1.5,-0.5) {$G_2$};
      \node[] at (2.5,-0.5) {$G_3$};
      \node[] at (3.5,-0.5) {$G_4$};
      \node[] at (4.5,-0.5) {$G_5$};
      \node[] at (5.5,-0.5) {$G_6$};
      \node[] at (6.5,-0.5) {$G_7$};
      \node[] at (7.5,-0.5) {$G_8$};
      \draw[step=1,color=gray] (0,0) grid (8,6); 
      \node[] at (4,7) {$\text{supp}(\beta_2)$ with $\beta_2 \in {\Theta}_{e,2}$};
      \end{tikzpicture} }
\caption{To fully comprehend the subspaces ${\Theta}_{e,1}, {\Theta}_{e,2}$, we take two examples $\beta_1,\beta_2$ from them respectively and use the black solid regions to represent their support sets (take $m=8$, $d=6$ and $s=s_0=3$). We reshape the group structure as $6\times 8$ matrices with each column representing a group. 
In ${\Theta}_{e,1}$ (subfigure (a)), the support groups are the first three groups.
In ${\Theta}_{e,2}$ (subfigure (b)), for each support group, only the first three entries are support entries.}\label{fig:examples}
\end{figure}

{
For ease of display, here we rewrite the element-wise decoder $\eta_{ij}^* = \eta_{ij}(\beta^*) = \mathbf 1 (\beta_{ij}^* \ne 0)$ and the group-wise decoder $(\eta_{G}^*)_j = \left(\eta_G\right)_j(\beta^*) = \mathbf1( \beta_{G_j}^* \ne \mathbf 0_d)$.
The Hamming losses 
$$ \sum_{(i,j)\in [d]\times[m]} |\hat{\eta}_{ij}  - \eta^*_{ij}|, \text{ and } \sum_{j \in [m]} |(\hat{\eta}_G)_{j}  - ( \eta_G^*)_{j}|
$$ 
are then employed to measure selection errors across different subspaces, where
$$
\hat \eta = \{ \hat \eta_{ij}\}_{(i,j)\in [d]\times[m]} \in \{0,1\}^{d\times m}, \text{ and } \hat\eta_G = \{ (\hat \eta_G)_j \}_{j\in[m]} \in \{0,1\}^{m}
$$ 
are the element-wise and group-wise selectors aiming to recover the true support pattern.}

The following theorem characterizes the minimax lower bounds of selection errors in ${\Theta}_{e,1}$ and ${\Theta}_{e,2}$, respectively, demonstrating the complexity of support recovery in the double sparse space $\Theta_e\left(s,s_0, a \right)$. 


\begin{theorem}[Minimax lower bounds of selection errors] \label{allselectorD1D2}
Assume that the double sparse model \eqref{orireg} holds with $s<m$ and $s_0<d$. 
Then, for every $ s_0' \in (0,s_0)$, we have an element-wise selection lower bound 
\begin{equation}\label{allselectorD1}
\begin{aligned}
&\inf_{\hat \eta\in \{0,1\}^{d\times m} }~ \sup_{\beta^* \in \Theta_e (s,s_0, a)} ~ {\mathbf{E}}_{Y \sim P_{\beta^*} }\left\{ \sum_{(i,j)\in [d]\times[m]} |\hat{\eta}_{ij}(Y,X) - \eta^*_{ij}| \right\}\\
\ge & \inf_{\hat \eta\in \{0,1\}^{d\times m} } \sup_{\beta^* \in {\Theta}_{e,1}}  {\mathbf{E}}_{Y \sim P_{\beta^*} }\left\{ \sum_{(i,j)\in [d]\times[m]} |\hat{\eta}_{ij}(Y,X) - \eta^*_{ij}| \right\}\\
\ge& \frac{ss_0'}{2s_0}\left\{ (d-s_0) {\Phi}\left(-\frac{a \sqrt n}{2\sigma}-\frac{\sigma\log(d/s_0-1)}{a\sqrt n} \right) \right.\\ 
 & \qquad~~ \left. + s_0 {\Phi}\left(-\frac{a \sqrt n}{2\sigma}+\frac{\sigma \log(d/s_0-1)}{a\sqrt n} \right) \right\} \\
    & -  2s( s_0+s_0') \exp\left(-\frac{3s(s_0-s_0')^2}{2(s_0+2s_0')}\right) .
\end{aligned}
\end{equation}
Additionally, for every $s' \in (0,s)$, we have a group-wise selection lower bound 
\begin{equation}\label{allselectorD2}
\begin{aligned}
&\inf_{\hat \eta_G \in \{0,1\}^{ m} }~ \sup_{\beta^* \in \Theta_e (s,s_0, a )} ~ {\mathbf{E}}_{Y \sim P_{\beta^*} } \left\{ \sum_{j\in [m]} \left| (\hat \eta_G)_j(Y,X) - (\eta^*_G)_j \right| \right\}\\
\ge & \inf_{\hat \eta_G \in \{0,1\}^m} \sup_{\beta^* \in  {\Theta}_{e,2}} 
    \mathbf{E}_{Y\sim P_{\beta^*} } \left\{ \sum_{j\in [m]} \left| (\hat \eta_G)_j(Y,X) - (\eta^*_G)_j \right| \right\} \\
\ge& \frac{ s'}{2s}\left\{  (m-s) {\Phi}\left(-\frac{a \|\sum_{i\in[s_0]}X_{(ij)}\|_2 }{2\sigma} -
     \frac{\sigma \log(m/s-1)}{a \|\sum_{i\in[s_0]}X_{(ij)}\|_2 } \right) \right.  \\
     & \qquad  \left. +s {\Phi}\left(-\frac{a \|\sum_{i\in[s_0]}X_{(ij)}\|_2 }{2\sigma} + 
     \frac{\sigma \log(m/s-1)}{a \|\sum_{i\in[s_0]}X_{(ij)}\|_2 } \right) \right\} \\
    &- 2(s+s')\exp\left(-\frac{3(s-s')^2}{2(s+2s')} \right).
\end{aligned}
\end{equation}
where ${\Phi}(\cdot)$ is the cumulative distribution function of standard normal distribution, 
and ${\mathbf{E}}_{Y \sim P_{\beta^*} }$ represents the expectation for $Y \sim  N( X\beta^*, \sigma^2 \mathbf I_n)$, as we assume the design matrix $X$ is fixed.
\end{theorem}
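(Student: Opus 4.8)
The two displayed chains each begin with a trivial restriction and end with the genuine estimate, so I would treat them in parallel. The first inequality in \eqref{allselectorD1} (and in \eqref{allselectorD2}) merely restricts the supremum from $\Theta_e(s,s_0,a)$ to the subfamily $\Theta_{e,1}$ (resp. $\Theta_{e,2}$), and holds because a supremum over a subset never exceeds the supremum over the whole set. All the work lies in the second inequality, which I would establish by a minimax-to-Bayes reduction: lower bound the minimax Hamming risk over the subfamily by the Bayes risk under a carefully chosen prior, evaluate that Bayes risk coordinatewise (resp. groupwise), and then subtract a correction for the event that the prior escapes the double-sparse set.

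For $\Theta_{e,1}$ I would fix the support groups to $[s]$ and place a product prior on the $sd$ coordinates of these groups, making each coordinate carry the signal value $a$ independently with probability $q=s_0'/d$, so the expected number of signals is $ss_0'$, strictly inside the budget $ss_0$. For any selector $\hat\eta$ the Bayes risk splits as $\sum_{(i,j)}\mathbf P(\hat\eta_{ij}\ne\eta^*_{ij})$, and each summand is bounded below by a \emph{genie-aided} two-point testing error: revealing the values of all other coordinates and subtracting their exact contribution from $Y$ leaves the residual $X_{(ij)}\beta_{ij}+\sigma\xi$, whose projection onto $X_{(ij)}$ is exactly $N(\beta_{ij}\sqrt n,\sigma^2)$ since $\|X_{(ij)}\|_2=\sqrt n$; adding information can only shrink the error, so this is a valid lower bound. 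Testing $\beta_{ij}=0$ against $\beta_{ij}=a$ is then a Gaussian location test with separation $\mu=a\sqrt n/\sigma$, and by Neyman--Pearson its optimal error is the prior-weighted sum of the two tail probabilities $\Phi\big(-\tfrac{a\sqrt n}{2\sigma}\mp\tfrac{\sigma}{a\sqrt n}\log(\text{odds})\big)$, the odds being fixed by the prior inclusion probability. Summing over the $d$ coordinates of a group and over the $s$ groups produces the main bracket. The bound \eqref{allselectorD2} is the identical argument with the single column $X_{(ij)}$ replaced by the group direction $v_j:=\sum_{i\in[s_0]}X_{(ij)}$, so that the separation becomes $a\|v_j\|_2/\sigma$ and the counting runs over the $m$ groups with inclusion probability $s'/m$.

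The delicate point — and what I expect to be the main obstacle — is that the product prior is \emph{not} supported on $\Theta_{e,1}$: the realized number of signals is random and can exceed $ss_0$, so the clean coordinatewise decomposition and the genuine double-sparse constraint are in tension. I would reconcile them through the slack parameter $s_0'<s_0$. Writing $\mathcal G$ for the good event $\|\beta\|_0\le ss_0$, the chain $\sup_{\beta\in\Theta_{e,1}}\mathbf E_\beta[\mathrm{Ham}]\ge \mathbf E_\pi[\mathbf E_\beta[\mathrm{Ham}]\,\mathbf 1_{\mathcal G}]\ge \mathbf E_\pi[\mathbf E_\beta[\mathrm{Ham}]]-\mathbf E_\pi[\mathbf E_\beta[\mathrm{Ham}]\,\mathbf 1_{\mathcal G^c}]$ separates the Bayes main term from a correction. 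Since under $\pi$ the total signal count is a sum of $sd$ independent Bernoulli variables with mean $ss_0'$, Bernstein's inequality controls $\pi(\mathcal G^c)=\mathbf P(\text{count}>ss_0)\le \exp\big(-\tfrac{3s(s_0-s_0')^2}{2(s_0+2s_0')}\big)$, and combined with a crude bound $2s(s_0+s_0')$ on the relevant Hamming loss this yields exactly the subtracted term. Finally, because the honest Bayes main term carries the prior odds $d/s_0'-1$ whereas the statement is phrased with $\log(d/s_0-1)$ and counts $d-s_0,\,s_0$, I would close with an elementary Gaussian-tail comparison: using $s_0'<s_0$, the monotonicity of $\Phi$, and a factor $\tfrac12$ of slack on the dominant null-coordinate term, one lower bounds the $s_0'$-term by $\tfrac{ss_0'}{2s_0}$ times the stated $s_0$-bracket. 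The group-wise bound \eqref{allselectorD2} follows verbatim with $(s,s',m)$ in place of $(s_0,s_0',d)$ and the group statistic $v_j$, the Bernstein step now controlling the number of selected groups. The genie reduction makes the design correlations harmless, so the only genuinely careful bookkeeping is the interplay between the inclusion probability that governs the constraint and the odds that govern the optimal threshold.
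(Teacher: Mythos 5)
Your architecture coincides with the paper's own proof: the same restriction to $\Theta_{e,1}$ and $\Theta_{e,2}$, the same Bernoulli-type priors with inclusion probabilities $s_0'/d$ and $s'/m$, the same genie-aided reduction to a univariate Gaussian likelihood-ratio test (the paper's Lemma \ref{lemma:minimaxori} plus the computations \eqref{D1T} and \eqref{D2T}), and the same Bernstein control (Lemma \ref{berncen}) of the event that the prior violates the sparsity budget. However, two of your steps would fail as written. The first is the correction term. Your chain keeps the same selector $\hat\eta$ in both the main term and the correction $\mathbf E_\pi\bigl[\mathbf E_\beta[\mathrm{Ham}]\,\mathbf 1_{\mathcal G^c}\bigr]$, which is fine selector-by-selector; but to conclude you need $\inf_{\hat\eta}(\text{main})-\sup_{\hat\eta}(\text{correction})$, i.e.\ a bound on the correction that is \emph{uniform} over selectors, and no bound of size $2s(s_0+s_0')$ exists: for $\hat\eta\equiv\mathbf 1$ the loss on $\mathcal G^c$ is of order $dm$, and even the part involving only the truth, $\|\eta^*\|_0=v$, exceeds $ss_0$ on $\mathcal G^c$ and can be as large as $sd$. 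The paper's Lemma \ref{lemma:minimaxori} is built precisely to avoid this: it compares the truncated-prior Bayes risk with the untruncated one by inserting the truncated posterior mean as a test estimator, so the correction involves (i) the posterior second moment under the truncated prior, deterministically at most $ss_0$, and (ii) $\mathbf E[v\,\mathbf 1(v>ss_0)]$, which is \emph{not} bounded by a crude constant times a probability but evaluated through the binomial mean-shift identity $\mathbf E[v\,\mathbf 1(v>ss_0)]=ss_0'\,\mathbf P\bigl(\mathrm{Bin}(ds-1,s_0'/d)\ge ss_0\bigr)$ and then Bernstein; that is where $2s(s_0+s_0')\exp(\cdot)$ actually comes from.

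The second gap is the conversion from the $s_0'$-odds bracket to the stated $s_0$-odds bracket. Write $u=\frac{a\sqrt n}{2\sigma}$, $c=\frac{\sigma}{a\sqrt n}\log(d/s_0-1)$, $c'=\frac{\sigma}{a\sqrt n}\log(d/s_0'-1)$, so $c'>c$. Monotonicity of $\Phi$ handles the miss term, since $\Phi(-u+c')\ge\Phi(-u+c)$, but it points the \emph{wrong} way for the false-alarm term: $\Phi(-u-c')\le\Phi(-u-c)$, and for small $a$ the ratio $\Phi(-u-c')/\Phi(-u-c)$ is exponentially small, so no term-by-term comparison with a constant factor of slack can succeed. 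What is actually needed is the monotonicity of $r\mapsto\psi(d,r,a,\sigma)/r$ (the paper's Lemma \ref{mdPsi}), whose proof differentiates in $r$ and uses the identity $\frac{d-r}{r}\varphi(t/\sigma)=\varphi\bigl((a\sqrt n-t)/\sigma\bigr)$ at the likelihood-ratio threshold so that the $\partial t/\partial r$ contributions cancel, leaving $A'(r)=-\frac{d}{r^2}\Phi(-t/\sigma)<0$. Alternatively, exploiting the factor-$2$ slack you have from working with binary selectors, it suffices to prove $(d-s_0)\Phi(-u-c)\le s_0\Phi(-u+c)$ and then absorb the whole target bracket into your miss term; this inequality is true, but its proof again requires the same Gaussian density identity together with the Mills-ratio bound $\Phi(-x)\le\varphi(x)/x$, so it is a genuine lemma rather than a consequence of monotonicity of $\Phi$. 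Everything else in your sketch matches the paper and is sound.
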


Theorem \ref{allselectorD1D2} quantifies the influence of the element-wise signal strength $a$ on support recovery in these two subspaces. 
{To avoid confusion with the notation introduced in Section \ref{sec:scaled}, we clarify the distinction in selectors: In Section \ref{sec:scaled}, each element-wise selector $\eta_{ij}(\tilde \beta^t) = \mathbf 1(\tilde\beta^t_{ij} \ne 0)$ is defined via the estimator $\tilde\beta^t$.
By contrast, in Theorem \ref{allselectorD1D2} we derive lower bounds over arbitrary selectors, not limited to estimator-based selectors, so we employ the more general notation $\hat\eta_{ij}(Y,X)$ to emphasize its dependence only on the observed data $(Y,X)$, rather than on a particular estimator.}
The proof of Theorem \ref{allselectorD1D2} is provided in Appendix \ref{lowerproof}.
Building on Theorem \ref{allselectorD1D2}, we next give two more intuitive minimax lower bounds when $a$ is sufficiently small.

{
\begin{theorem}[Necessity of element-wise minimum signal strength]\label{LB2priors}
Assume that the double sparse model \eqref{orireg} holds with $25 \le s< m/2$, $ s_0<d/2$ and $ss_0 \ge 54$. 
Assume that the design matrix $X$ satisfies $DSRIP(s,s_0, \delta)$ with arbitrary $\delta \in (0,1)$. 
If the minimum signal strength $a$ satisfies
\begin{equation}\label{eq: sep}
a^2 \le \frac{\sigma^2}{10n} \left( \frac{\log(m-s)}{s_0(1+\delta)} + \log(sd-ss_0)\right),
\end{equation}
then we have
\begin{equation}\label{eq: elementwise LB 1}
\inf_{\hat \eta\in  \{0,1\}^{d \times m} } \sup_{\beta^* \in {\Theta}_{e}(s,s_0,a)} ~\mathbf{E}_{Y \sim P_{\beta^*} } \left\{ \sum_{(i,j)\in [d]\times[m]} |\hat{\eta}_{ij}  - \eta^*_{ij}| \right\} 
\ge \frac{(ss_0)^{4/5}}{10},
\end{equation}
or
\begin{equation}\label{eq: elementwise LB 2}
\inf_{\hat \eta_G \in \{0,1\}^{ m} }~ \sup_{\beta^* \in \Theta_e (s,s_0, a )} ~ {\mathbf{E}}_{Y \sim P_{\beta^*} } \left\{ \sum_{j\in [m]} \left| (\hat \eta_G)_j  - (\eta^*_G)_j \right| \right\} 
\ge \frac{s^{7/10}}{20}
\end{equation}
\end{theorem}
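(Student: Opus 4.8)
The plan is to derive Theorem \ref{LB2priors} directly from the two minimax lower bounds \eqref{allselectorD1} and \eqref{allselectorD2} of Theorem \ref{allselectorD1D2}, by making explicit choices of the free parameters $s_0'\in(0,s_0)$ and $s'\in(0,s)$ and then simplifying the resulting Gaussian-tail and exponential-penalty terms under the signal budget \eqref{eq: sep}. The disjunctive conclusion (\eqref{eq: elementwise LB 1} \emph{or} \eqref{eq: elementwise LB 2}) will be produced by splitting the budget: since \eqref{eq: sep} controls $a^2$ by the sum of a group-scale term $T_1:=\log(m-s)/\{s_0(1+\delta)\}$ and an element-scale term $T_2:=\log(sd-ss_0)$, at least one of them is at least half the sum. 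If $T_2\ge T_1$ I would prove the element-wise bound \eqref{eq: elementwise LB 1}; if $T_1>T_2$ I would prove the group-wise bound \eqref{eq: elementwise LB 2}. In each branch the surviving constraint $a^2\le\tfrac{\sigma^2}{5n}T_2$ (resp.\ $a^2\le\tfrac{\sigma^2}{5n}T_1$) is exactly the scale at which the corresponding selection problem becomes information-theoretically hard.

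For the element branch I would set $s_0'$ to a fixed fraction of $s_0$ (e.g.\ $s_0'=s_0/2$), so the subtracted penalty in \eqref{allselectorD1} becomes of order $ss_0\exp(-c\,ss_0)$, negligible against the target once $ss_0\ge 54$. Writing $u:=a\sqrt n/\sigma$ and $L:=\log(d/s_0-1)$, the constraint reads $u^2\le\tfrac15\log(sd-ss_0)$, and I would lower bound the two normal probabilities $\Phi(-u/2\mp L/u)$ via a standard Gaussian lower tail $\Phi(-x)\ge \tfrac{c}{1+x}e^{-x^2/2}$. Using the identity $(d-s_0)e^{-L/2}=\sqrt{s_0(d-s_0)}$ to absorb the dimension factor against the $e^{-L/2}$ in the tail, together with $s_0<d/2$ (so $d-s_0>d/2$) and $s<m/2$ to keep the logarithms comparable, the leading term is bounded below by a power of $ss_0$; subtracting the exponentially small penalty and invoking $ss_0\ge 54$, $s\ge 25$ yields $\ge (ss_0)^{4/5}/10$. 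Note this branch uses only the column standardization $\|X_{(ij)}\|_2=\sqrt n$ and no DSRIP, which is why $T_2$ carries no $(1+\delta)$.

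For the group branch I would analogously set $s'=s/2$ in \eqref{allselectorD2}, so the penalty is of order $s\,e^{-cs}$ and is controlled by $s\ge 25$. Here the Gaussian arguments involve $\|\sum_{i\in[s_0]}X_{(ij)}\|_2$, which I would bound with DSRIP (Definition \ref{df2}): the vector $\mathbf 1_{s_0}$ is supported on a single group, so $\|\sum_{i\in[s_0]}X_{(ij)}\|_2^2\le n(1+\delta)s_0$. Combining this with the branch constraint $a^2\le\tfrac{\sigma^2}{5n}\cdot\log(m-s)/\{s_0(1+\delta)\}$ makes the factor $(1+\delta)$ cancel, giving $a\|\sum_{i\in[s_0]}X_{(ij)}\|_2/\sigma\le\sqrt{\log(m-s)/5}$; this cancellation is precisely why $T_1$ carries the $(1+\delta)$ divisor. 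With the Gaussian argument thus uniformly controlled, I would lower bound the group probabilities exactly as in the element branch, absorb the dimension factor via $(m-s)e^{-\frac12\log(m/s-1)}=\sqrt{s(m-s)}$, and conclude $\ge s^{7/10}/20$.

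The main obstacle is the exponent bookkeeping that produces the non-integer rates $4/5$ and $7/10$. The subtracted penalties decay like $e^{-c\,ss_0}$ and $e^{-cs}$, whereas the leading term, after the dimension factor is absorbed by $e^{-L/2}$, still carries a residual $e^{-u^2/8-L^2/(2u^2)}$ (and its group analogue) that can be small in the balanced regime where $u^2\asymp L$. The delicate point is to verify that, uniformly over all admissible $(s,s_0,d,m)$ meeting the hypotheses, the leading term minus the penalty never drops below the stated power of $ss_0$ (resp.\ $s$). The sub-linear exponents are chosen to leave exactly enough slack to dominate the exponential penalty in this worst-case balance, and confirming this requires a careful monotonicity/case analysis of the tail exponent as a function of $u$ and the aspect ratios $d/s_0$ and $m/s$, which is the crux of the argument.
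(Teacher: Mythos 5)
Your high-level reduction is the same as the paper's: specialize Theorem \ref{allselectorD1D2} with $s_0'=s_0/2$ (resp. $s'=s/2$), split the budget \eqref{eq: sep} so that at least one of $a^2\le\tfrac{\sigma^2}{5n}\log(sd-ss_0)$ or $a^2\le\tfrac{\sigma^2\log(m-s)}{5 s_0 n(1+\delta)}$ holds, and in the group branch use DSRIP to get $\|\sum_{i\in[s_0]}X_{(ij)}\|_2^2\le n(1+\delta)s_0$ so that the $(1+\delta)$ cancels — all of that matches. The genuine gap is the step you yourself defer as "the crux": a uniform lower bound on the Gaussian probability under the branch budget. Your sketched route — lower-bound \emph{both} normal terms and absorb the dimension factor through $(d-s_0)e^{-L/2}=\sqrt{s_0(d-s_0)}$, then do a case analysis over $u^2\asymp L$ — is not only incomplete but partially broken: when $L=\log(d/s_0-1)\gg\log(ss_0)$, the first term $(d-s_0)\Phi(-u/2-L/u)$ is of order $s_0e^{-cL}$, i.e. exponentially small in $L$, so the absorption identity buys nothing there; in that regime one must instead notice that the budget forces $u^2\le\tfrac15[\log(ss_0)+L]<2L$, so the \emph{second} term has a positive argument and $\Phi\ge 1/2$. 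You never specify how these regimes are glued, which is exactly the content that is missing.

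The paper closes this gap with a single clean argument that avoids all case analysis: discard the first term entirely, and observe that $u\mapsto -u/2+L/u$ is decreasing, so under the cap $u^2\le\tfrac15[\log(ss_0)+L]$ one can evaluate at the larger value $u_*^2=2\bigl(B+\sqrt{B^2-L^2}\bigr)$ with $B=\tfrac{\log(ss_0)}{10}+L$, where an algebraic identity gives the argument exactly $-\sqrt{B-L}=-\sqrt{\log(ss_0)/10}$ (this is the paper's display \eqref{eq: y}). Hence $\tfrac{ss_0}{4}\Phi\bigl(-u/2+L/u\bigr)\ge\tfrac{ss_0}{4}\Phi\bigl(-\sqrt{\log(ss_0)/10}\bigr)\gtrsim (ss_0)^{19/20}/\sqrt{\log(ss_0)}$, uniformly over all aspect ratios, which clears the target $(ss_0)^{4/5}/10$ with room to spare; the group branch is identical with $\log(ss_0)\to\log s$ and $L\to\log(m/s-1)$. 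A related mischaracterization in your last paragraph: the exponents $4/5$ and $7/10$ are not tuned to "dominate the exponential penalty in the worst-case balance" — the penalties $3ss_0e^{-3ss_0/16}$ and $3se^{-3s/16}$ are exponentially negligible once $ss_0\ge54$, $s\ge25$. The real slack is between the exponent $19/20$ delivered by the Gaussian bound and the stated $4/5$ (resp. between $19/20$ and $7/10$), which absorbs the logarithmic factor and the constants; making that explicit is what your proposal still needs.
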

}

Theorem \ref{LB2priors} shows that if \eqref{eq: sep} holds, then exact support recovery at both element-wise and group-wise levels is impossible: at least one of the two selection tasks must fail.
Moreover, by combining \eqref{eq: sep} with \eqref{eq: uppercondition} we establish the element-wise minimax separation rate $\sqrt{\frac{\sigma^2}n \left( \frac1{s_0} \log m +  \log(sd) \right)}$ for the double-sparse model.
By contrast, in the purely element-sparse parameter space $\Theta(ss_0):=\{\beta\in\mathbb{R}^p~|~\|\beta\|_0\le s s_0\}$ with $p=dm$, the element-wise minimax separation rate required for exact support recovery is $\sqrt{\frac{\sigma^2}{n}\big(\log m + \log d\big)}$ \cite{WJM07rec, CM18}, which is larger than the rate above.  
This gap demonstrates that the double sparse structure facilitates support recovery from the perspective of element-wise signal strength.

{
Additionally, we stress that our focus is on non-asymptotic minimum signal rates rather than on the asymptotic phase-transition phenomenon of support recovery.
Consequently, the numerical constants in Theorem \ref{LB2priors} are conservative and not optimized for sharpness.
The same remark applies to the results in the next subsection.
}

{
\begin{remark}
[Interpretation of the element-wise separation rate]
We can comprehend the composition of the minimax separation rate \eqref{eq: sep} from two perspectives:
\begin{itemize}
    \item Parameter $s_0$ controls the average number of nonzero entries in each group, and determines the minimum signal strength $\frac{\sigma^2}{n}\frac{\log m}{s_0}$ required for group-wise exact recovery. This term arises from the Hamming loss of group selection in subspace $\Theta_{e,2}$.

    \item Parameters $s$ and $d$ control the total number of entries in the active groups, and determine the minimum signal strength $\frac{\sigma^2}{n}\log(sd)$ required for element-wise exact recovery. This term arises from the Hamming loss of element selection in subspace $\Theta_{e,1}$.
\end{itemize}
Taking the maximum of these two thresholds yields $a^2 \gtrsim \frac{\sigma^2}n \left\{ \frac{\log m}{s_0} + \log(sd)\right\}$, which provides the lower bound of element-wise signal strength in \eqref{eq: sep}.
\end{remark}
}

 

\subsection{How group-wise signal strength affects recovery}
We next analyze how the group-wise signal strength influences the support recovery in the double sparse space $\Theta(s,s_0)$ \eqref{eq:oriset}.
Define 
\begin{equation}\label{omega group}  
{\Theta}_g(s,s_0,b): = \left\{ \beta \in {\Theta}(s,s_0) \left|~\min_{j \in G^*(\beta)} \| \beta_{G_j} \|_2 \ge b \right. \right\},  
\end{equation}  
where $b > 0$ is a parameter that quantifies the group-wise minimum signal strength.
Similar to Theorems \ref{allselectorD1D2} and \ref{LB2priors}, we next demonstrate a necessary group-wise signal strength $b$ for the exact support recovery task.

{
\begin{theorem}[Necessity of group-wise minimum signal strength]\label{th: LB group}
Assume that the double sparse model \eqref{orireg} holds with $25 \le s< m/2$, $ s_0<d/2$, $ss_0 \ge 87$, and $s \le 0.061 s_0^{-1/6} \exp(0.1563s_0)$. 
Assume that the design matrix $X$ satisfies $DSRIP(s,s_0, \delta)$ with arbitrary $\delta \in (0,1)$. 
If $b$ satisfies
\begin{equation}\label{eq: groupmin violate}
b^2 \le \frac{\sigma^2}{10n} \left( \frac{\log(m-s)}{(1+\delta)} +\frac{s_0}{20} \log(sd-ss_0)\right),
\end{equation}
then we have
\begin{equation}\label{eq: elementwise LB 3}
\inf_{\hat \eta\in  \{0,1\}^{d \times m} } \sup_{\beta^* \in {\Theta}_{g}(s,s_0, b)}~ \mathbf{E}_{Y \sim P_{\beta^*} } \left\{ \sum_{(i,j)\in [d]\times[m]} |\hat{\eta}_{ij}  - \eta^*_{ij}| \right\}
\ge \frac{(ss_0)^{4/5}}{100},
\end{equation}
or
\begin{equation}\label{eq: elementwise LB 4}
\inf_{\hat \eta_G \in \{0,1\}^{ m} }~ \sup_{\beta^* \in \Theta_g (s,s_0, b )} ~ {\mathbf{E}}_{Y \sim P_{\beta^*} } \left\{ \sum_{j\in [m]} \left| (\hat \eta_G)_j  - (\eta^*_G)_j \right| \right\} 
\ge \frac{s^{7/10}}{20} .
\end{equation}
\end{theorem}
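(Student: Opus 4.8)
The plan is to mirror the proof of Theorem~\ref{LB2priors}, recalibrating the element magnitude so that it is the group-norm budget that is being spent. I would reuse the two least-favorable families $\Theta_{e,1}$ and $\Theta_{e,2}$ from Theorem~\ref{allselectorD1D2}, but instantiate them with element size $a = b/\sqrt{s_0}$ and with each active group carrying its full complement of $s_0$ equal entries. Then every configuration has $\|\beta^*_{G_j}\|_2 = \sqrt{s_0}\,a = b$ on each active group, so both families sit inside $\Theta_g(s,s_0,b)$; consequently $\sup_{\beta^*\in\Theta_g(s,s_0,b)}$ dominates the two suprema, and the selection lower bounds \eqref{allselectorD1} and \eqref{allselectorD2} transfer verbatim after the substitution $a\mapsto b/\sqrt{s_0}$.

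Next I would rewrite the $\Phi$-arguments so they depend only on $b$. The element-wise bound already involves $\tfrac{a\sqrt n}{2\sigma} = \tfrac{b\sqrt n}{2\sigma\sqrt{s_0}}$. The group-wise bound involves $\|\sum_{i\in[s_0]}X_{(ij)}\|_2 = \|X_S\,\mathbf 1_{s_0}\|_2$, where $S$ indexes the first $s_0$ coordinates of a single group and $\mathbf 1_{s_0}\in\mathbb R^{s_0}$ is the all-ones vector. Since $S\in\mathcal S(s,s_0)$, Definition~\ref{df2} gives $n(1-\delta)s_0 \le \|X_S\mathbf 1_{s_0}\|_2^2 \le n(1+\delta)s_0$, hence $a\|\sum_{i\in[s_0]}X_{(ij)}\|_2 = \tfrac{b}{\sqrt{s_0}}\|X_S\mathbf 1_{s_0}\|_2$ lies between $b\sqrt{n(1-\delta)}$ and $b\sqrt{n(1+\delta)}$. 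This is precisely where the $(1+\delta)$ in the leading term of \eqref{eq: groupmin violate} originates.

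The heart of the argument is then a dichotomy on which part of the budget in \eqref{eq: groupmin violate} is larger. Setting $T_1 = \log(m-s)/(1+\delta)$ and $T_2 = \tfrac{s_0}{20}\log(sd-ss_0)$, the hypothesis $b^2\le\tfrac{\sigma^2}{10n}(T_1+T_2)$ forces $b^2\le\tfrac{\sigma^2}{5n}\max\{T_1,T_2\}$. If $T_1$ dominates, then $b^2\lesssim \tfrac{\sigma^2}{n(1+\delta)}\log(m-s)$, keeping the group-wise argument $\tfrac{b\sqrt{n(1+\delta)}}{2\sigma}$ bounded; feeding this into \eqref{allselectorD2} with $s'$ taken near $s$ yields the group-wise bound \eqref{eq: elementwise LB 4}. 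If $T_2$ dominates, then $a^2 = b^2/s_0\lesssim \tfrac{\sigma^2}{n}\log(sd)$, keeping the element-wise argument $\tfrac{b\sqrt n}{2\sigma\sqrt{s_0}}$ bounded; feeding this into \eqref{allselectorD1} with $s_0'$ near $s_0$ yields the element-wise bound \eqref{eq: elementwise LB 3}. In each case I would lower-bound the surviving dominant term, namely $s_0\Phi(-\tfrac{a\sqrt n}{2\sigma}+\tfrac{\sigma\log(d/s_0-1)}{a\sqrt n})$ or its group analogue, whose $\Phi$-argument is positive once the signal is weak, by an absolute constant, and simply discard the other $\Phi$-term.

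The delicate part, and the step I expect to be the main obstacle, is the constant and rate bookkeeping that upgrades the raw magnitudes $\tfrac{ss_0'}{2}$ and $\tfrac{s'}{2}$ into the stated rates $(ss_0)^{4/5}/100$ and $s^{7/10}/20$. This requires choosing the auxiliary parameters $s_0'\in(0,s_0)$ and $s'\in(0,s)$ to balance the main term against the exponential corrections $2s(s_0+s_0')\exp(-\tfrac{3s(s_0-s_0')^2}{2(s_0+2s_0')})$ and $2(s+s')\exp(-\tfrac{3(s-s')^2}{2(s+2s')})$ appearing in \eqref{allselectorD1} and \eqref{allselectorD2}. Forcing these corrections below the main term is what produces the admissibility conditions $s\ge 25$ and $ss_0\ge 87$, and, because the group prior concentrates the signal across $s_0$ coordinates, the extra constraint $s \le 0.061\,s_0^{-1/6}\exp(0.1563\,s_0)$, which I expect to emerge from comparing $s$ against the $\exp(-c\,s_0)$-type correction generated by that concentration. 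I anticipate no conceptual barrier beyond Theorem~\ref{allselectorD1D2} and DSRIP; the real work is this quantitative tuning.
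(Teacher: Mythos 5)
Your overall architecture (two least-favorable subfamilies inside $\Theta_g(s,s_0,b)$, plus the $T_1$-versus-$T_2$ dichotomy on the budget in \eqref{eq: groupmin violate}) is exactly the paper's, and your group-wise half is sound: with $a=b/\sqrt{s_0}$ the family $\Theta_{e,2}$ literally \emph{is} the paper's $\Theta_{g,2}$ (each active group already carries exactly its first $s_0$ entries), so \eqref{allselectorD2} and the DSRIP bound $\|\sum_{i\in[s_0]}X_{(ij)}\|_2\le\sqrt{n(1+\delta)s_0}$ apply directly and yield \eqref{eq: elementwise LB 4}. The genuine gap is in the element-wise half. You modify $\Theta_{e,1}$ so that every active group carries exactly $s_0$ entries of size $a=b/\sqrt{s_0}$, and then claim \eqref{allselectorD1} ``transfers verbatim.'' It cannot: \eqref{allselectorD1} lower-bounds the supremum over the \emph{original} $\Theta_{e,1}$, and its proof runs through the independent Bernoulli prior $\pi_1$, under which the per-group counts $v_j\sim \mathrm{Bin}(d,s_0'/d)$ fluctuate around $s_0'<s_0$. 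Your exact-$s_0$ family is a strict subset of $\Theta_{e,1}$, and minimax lower bounds do not pass to subsets; conversely, the original $\Theta_{e,1}$ with $a=b/\sqrt{s_0}$ is \emph{not} contained in $\Theta_g(s,s_0,b)$, since any realization with $v_j<s_0$ has $\|\beta_{G_j}\|_2=a\sqrt{v_j}<b$ --- and under $\pi_1$ this happens with probability of order one (the group means are $s_0'<s_0$), so the prior-mass correction in Lemma \ref{lemma:minimaxori} would be of order $ss_0$ and swamp the main term, not appear as a small $\exp(-cs_0)$ remainder. Whichever of the two families you intend, one of the two links in your chain (``family $\subseteq\Theta_g$'' or ``bound applies to family'') breaks; the correction terms quoted in \eqref{allselectorD1} only police the total count exceeding $ss_0$, never under-full groups.

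The repair is what the paper's $\Theta_{g,1}$ implements, and it explains the constants you flagged as mysterious: relax the per-group count to $\|\beta_{G_j}\|_0\ge s_0/10$ and \emph{inflate} the element size to $a=b\sqrt{10/s_0}$ (not $b/\sqrt{s_0}$), so that the group-norm floor $b$ survives under-full groups; this factor of $10$ is precisely why \eqref{eq: groupmin violate} carries $\tfrac{s_0}{20}\log(sd-ss_0)$ rather than $\tfrac{s_0}{2}\log(\cdot)$. Keeping the product prior $\pi_1$ with mean $s_0'=2s_0/3$, the bad event $\{v_j<s_0/10\text{ for some }j\in[s]\}$ now has exponentially small probability by Bernstein's inequality, entering the bound as the additional correction $4s^2s_0\exp\bigl(-\tfrac{15(s_0'-s_0/10)^2}{40s_0'-s_0}\bigr)=4s^2s_0e^{-0.1876 s_0}$ that is absent from \eqref{allselectorD1}; forcing this below the main term $(ss_0)^{4/5}$ is exactly where $s\le 0.061\,s_0^{-1/6}\exp(0.1563\,s_0)$ comes from. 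Alternatively, if you insist on exactly $s_0$ entries per group, you need a non-product prior (uniform over size-$s_0$ supports within each group), and then the coordinate-wise Bayes decomposition behind \eqref{allselectorD1} --- which requires independence of the $\eta^*_{ij}$ to identify the per-coordinate oracle selector --- is no longer available, so nothing transfers verbatim there either.
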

}

{
Theorem \ref{th: LB group} establishes the group-wise minimax separation rate (measured by the group-wise $\ell_2$ norm) as $\sqrt{\frac{\sigma^2}n \big(   \log m + s_0 \log(sd) \big)}$.
Combining Theorems \ref{LB2priors} and \ref{th: LB group}, we show that both the element-wise and the group-wise minimum signal conditions, as illustrated in \eqref{eq: uppercondition}, are necessary for exact support recovery in the double sparse model: omitting either condition precludes simultaneous support recovery at both element and group levels.}
These results confirm that our two-stage DSIHT Algorithm \ref{scaledIHT} attains minimax rate-optimal support recovery performance, thereby demonstrating its theoretical advantage over existing procedures \cite{CZ22, zhang2024minimax}.

{
\begin{remark}[Minimal sample size for exact recovery]
Consider the case that the element-wise and the group-wise minimum signal strengths $\min_{(i,j)\in S^*}|\beta^*_{ij}| =a$ and $ \min_{j \in G^*}\|\beta^*_{G_j}\|_2 =g$ are given at firsthand.
Then, beyond the optimal sample size $n \gtrsim ss_0 \Delta(s,s_0)$ (required for the DSRIP condition), exact support recovery in the double sparse model further requires
$$
\begin{aligned}
n \gtrsim& \max\left\{\frac{\sigma^2}{a^2}\left( \frac{\log m}{s_0} + \log(sd) \right),
~ \frac{\sigma^2}{g^2}\left(\log m + s_0\log(sd) \right)  \right\}\\
\asymp & \frac{\sigma^2}{\min(a^2,~ g^2/s_0)}\left( \frac{\log m}{s_0} + \log(sd) \right).
\end{aligned}
$$
Thus, the element-wise signal $a^2$ and the group-average signal $g^2/s_0$ jointly influence the sample complexity. 
The number of support groups $s$ affects the scale of element-wise selection via the $\log(sd)$ term, while the average group sparsity $s_0$ plays a dual role: 
\begin{itemize}
    \item If $s_0 < g^2/a^2$, then $s_0$ mainly affects the group-selection complexity (through the $(\log m)/s_0$ term).
    
    \item If $s_0 \ge g^2/a^2$, then $s_0$ mainly affects the within-group selection complexity (through the $s_0\log(sd)$ term).
\end{itemize}
\end{remark}
}

\section{Numerical experiments}\label{sec:num}
In this section, we investigate the finite-sample properties of our proposed algorithm. All simulations in this section are computed using R and executed on a personal laptop with an AMD Ryzen 7 5800H processor operating at 3.20 GHz and 16.00GB of RAM.

{
Eight estimators are considered in the estimation: 
The DSIHT method denotes the one-stage Algorithm \ref{IHT} (proposed by \cite{zhang2024minimax}) implemented via the R package $\mathtt{ADSIHT}$, and the Two-stage DSIHT (TS-DSIHT) method is our two-stage procedure (Algorithm \ref{scaledIHT}) that refines the output of DSIHT.
The sparse group Lasso (SGLasso) method \cite{simon13SGL} is fitted using the R package $\mathtt{sparsegl}$ \citep{liang2023sparsegl} and its hyperparameters are selected via 5-fold cross-validation.
The debiased sparse group Lasso (Debiased-SGLasso) method uses a debiasing technique to refine the SGLasso estimates, following equations (22)-(23) in \cite{CZ22}.
The composite MCP (CMCP) method \cite{huang2012selective} is implemented via the R package $\mathtt{grpreg}$ \cite{Breheny15bio},  tuning by 5-fold cross-validation.
Without loss of generality, we also consider the element-wise IHT (IHT-element) and the element-wise Lasso (Lasso-element) as methods under investigation.
Finally, we include the Oracle method, i.e., the OLS estimator fitted on the true support, as a baseline for comparison.


We now detail the adaptive implementation of the second stage iteration in Algorithm \ref{scaledIHT}.
Based on the double sparse operator $\mathcal T_{\mu,s_0}$, it requires determining appropriate element-wise thresholds $\mu_e$ and group-wise thresholds $\mu_g$. To select these values adaptively, we consider the set
\begin{equation}\label{eq: grids}
\mathcal G_L = \left\{ \left(n^{-\frac{\ell_1-1}{2(L-1)}}, n^{-\frac{\ell_2-1}{2(L-1)}} \right): 1 \le \ell_2 < \ell_1\le L\right\} ,
\end{equation}
which provides a range of candidate pairs $(\mu_e, \mu_g)$ based on a geometric series.
We then use 5-fold cross-validation to select the optimal pair $(\hat{\mu}_e, {\hat \mu}_g)$ that minimizes prediction error and then refit the second stage iteration on the full data using this choice.
This method is denoted as TS-DSIHT-CV with taking $L =10$. 
For comparison, we also consider a non-adaptive version, denoted as TS-DSIHT-True, which assumes that the true parameters $(\sigma,s,s_0)$ are known.
}

\subsection{Simulation 1: the influence of signal strength}\label{num1}
We first introduce our model setting.
Assume that data are generated from the model $y = X \beta^* + \sigma \xi$, where $\sigma=1$ and $\beta^* \in \mathbb R^p$ has $m=50$ separable groups with equal group size $d=40$. 
We randomly draw the design matrix $X \in \mathbb R^{n \times p}$ from $N(0,1)$ independently and standardize by columns to ensure $\| X_{(ij)} \|_2^2 = n$ for each $(i,j) \in [d] \times [m]$, where $n = 300$. 
We set $s = s_0 = 5$, and 
\begin{equation}\label{eq: k}
\begin{aligned}
\beta_{ij}^* \sim 
\text{Unif}  \left(  k\sqrt{\frac{\sigma^2}n\left( \frac{\log(m/s)}{s_0} + \log(d/s_0)\right)} , 2k \sqrt{\frac{\sigma^2}n\left( \frac{\log(m/s)}{s_0} + \log(d/s_0)\right)} \right )
\end{aligned}
\end{equation}
for all $ (i,j) \in S^*$ independently, where $k \in [1,4]$ reflecting the signal strength.


We evaluate the estimation performance through the $\ell_2$ error rate $\|\hat \beta -\beta^* \|_2/ \|\beta^* \|_2$, and evaluate the support recovery performance using Hamming loss and Matthews Correlation Coefficient \cite{MCC75} at both the group-wise and element-wise levels (shown as Hamming Element, Hamming Group, MCC Element and MCC Group). We also compare the average computation time of each method.


\subsubsection{Estimation and support recovery}\label{sec: simu1}
\begin{figure}[t]
\centering
\includegraphics[scale=0.6]{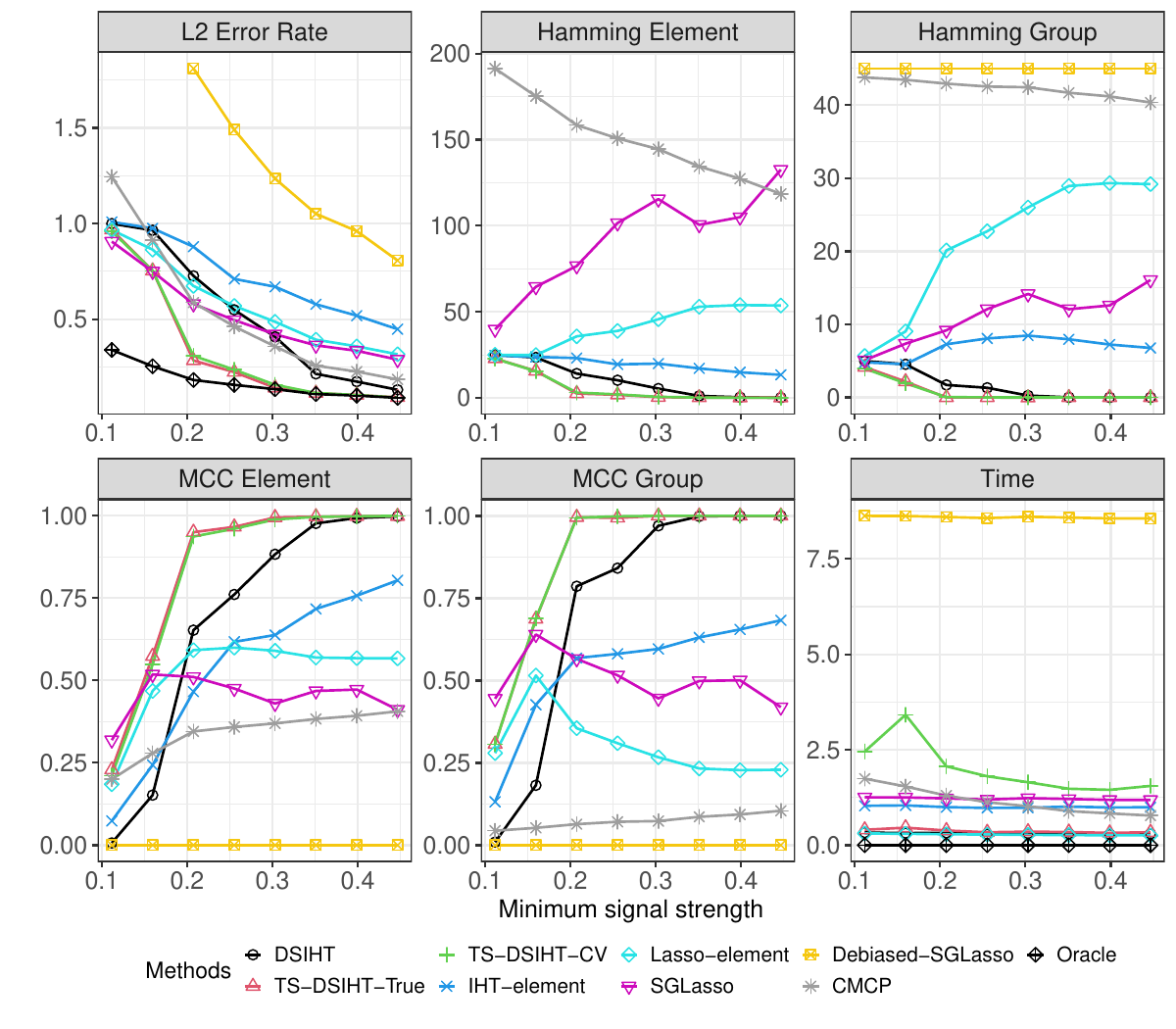}
\caption{Performance metrics with increasing signal strength.
    The x-axis represents the minimum signal strength. 
    Each point is averaged from 300 Monte Carlo simulations.
    The Oracle method (fitted on the true support $S^*$) has its MCCs always at 1 and Hamming loss always at 0, hence we ignore them.
    The Debiased-SGLasso produces a desparsified estimator with relatively large $\ell_2$ errors, consequently, its extreme values are omitted from the L2 Error Rate plot. Additionally, its element-wise Hamming loss is always at $2000-25=1975$, so it is omitted from the Hamming Element plot.
    }\label{fig:fig1} 
\end{figure}

Here, we analyze the influence of signal strength on the estimation and support recovery.
From Figure \ref{fig:fig1}, it is evident that when the signal strength is less than $0.20$, all high-dimensional methods perform poorly, showing substantial deviations from the Oracle estimates.
{As signal strength increases, both TS-DSIHT-True (using the true $(\sigma,s,s_0)$) and the data-driven TS-DSIHT-CV converge toward the Oracle estimator and attain nearly identical estimation accuracy.
This indicates that cross-validation can identify suitable threshold pairs $(\hat{\mu}_e, {\hat \mu}_g)$, allowing our output to adaptively achieve the oracle estimation rate when key parameters are unknown in practice.
Moreover, both methods recover the true support $S^*$ at both the group-wise and element-wise levels once the signal strength reaches roughly $0.30$.
Therefore, in the following simulations, we abbreviate the cross-validated procedure TS-DSIHT-CV as TS-DSIHT, referring to the adaptive version of Algorithm \ref{scaledIHT}.
The DSIHT method consistently requires stronger signals for similar performance.
Although the SGLasso and the CMCP methods show decreasing estimation error rate with stronger signals, 
both methods remain substantially biased to the Oracle estimation and exhibit poor support recovery performance. 
The Debiased-SGLasso method removes shrinkage bias and yields asymptotic normality, but it no longer produces sparse estimates, which leads to high $\ell_2$ errors and prevents reliable support recovery.

The total runtime of our full two-stage DSIHT method is larger than that of one-stage methods such as SGLasso and CMCP. However, the runtime of our second-stage refinement itself is comparable to those methods.
Moreover, the two-stage Debiased-SGLasso directly requires a precision matrix estimation step (see (23) in \cite{CZ22}) that our method avoids, making our method substantially faster than it.

}

\subsubsection{Asymptotic property}\label{simu: asy} 
To assess the asymptotic normality, we set $ k = 2.7$ in equation \eqref{eq: k} and plot the histograms for $\sqrt n \left(\hat\beta_{(1,1)} - \beta_{(1,1)}^* \right)$ and $\sqrt n \sum_{i=1}^5\left(\hat\beta_{(i,3)} - \beta_{(i,3)}^* \right)$ across sample sizes $n = 300, 600, 900, 1200$, where we denote by $\beta_{i,j}$ the coefficient of the $i$-th covariate in the $j$-th group.
{We also consider using our second-stage iteration in Algorithm \ref{scaledIHT} (lines 3-6) as a refinement procedure (with 5-fold cross-validation) for the sparse group Lasso estimator, and refer to it as the SGLasso-DSIHT method. 

\begin{figure}[t]
\centering
\includegraphics[scale=0.6]{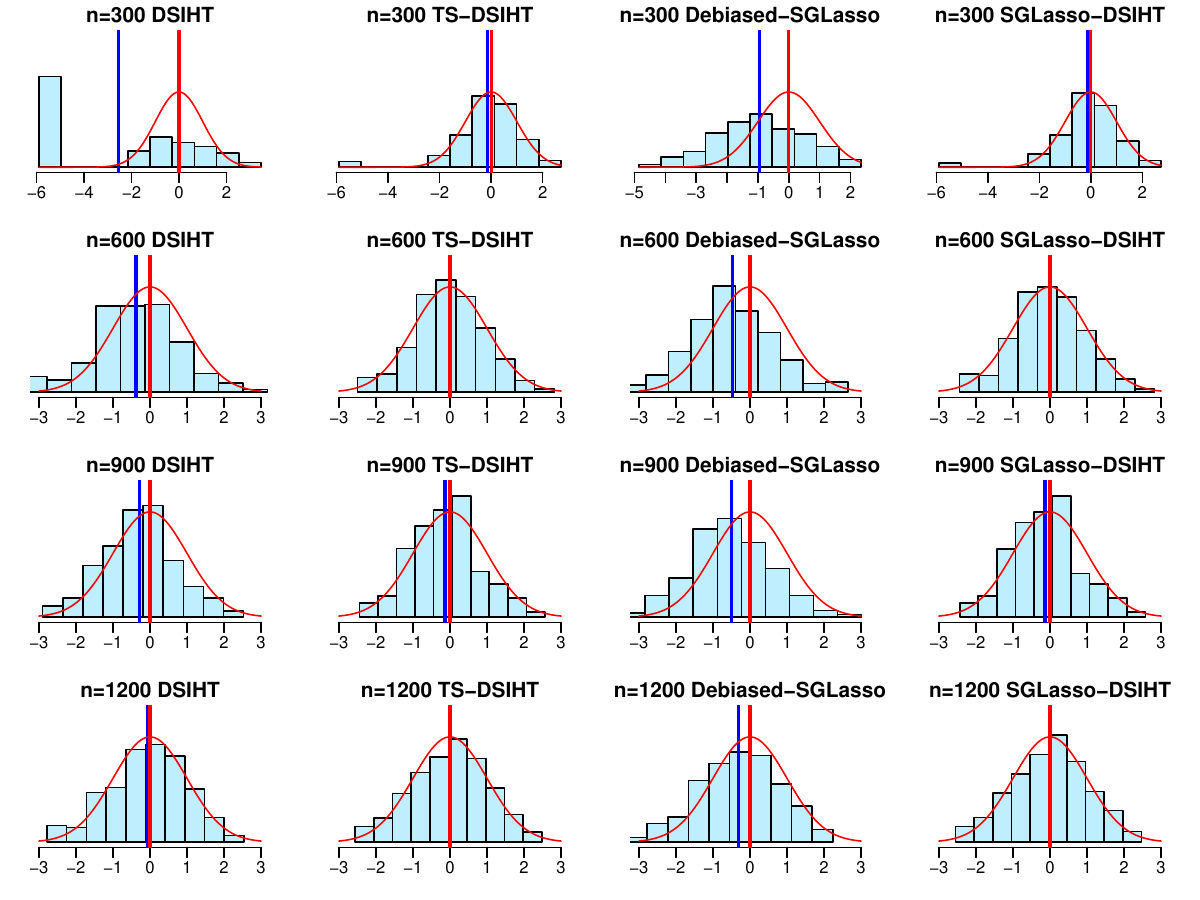}
\caption{The histograms of $\sqrt n \left(\hat\beta_{(1,1)} - \beta_{(1,1)}^* \right)$ with 300 Monte Carlo simulations conducted for each sample size. Blue vertical lines represent the sample means, and red vertical lines represent the population means.
}\label{fig21}
\end{figure}

\begin{figure}[t]
\centering
\includegraphics[scale=0.6]{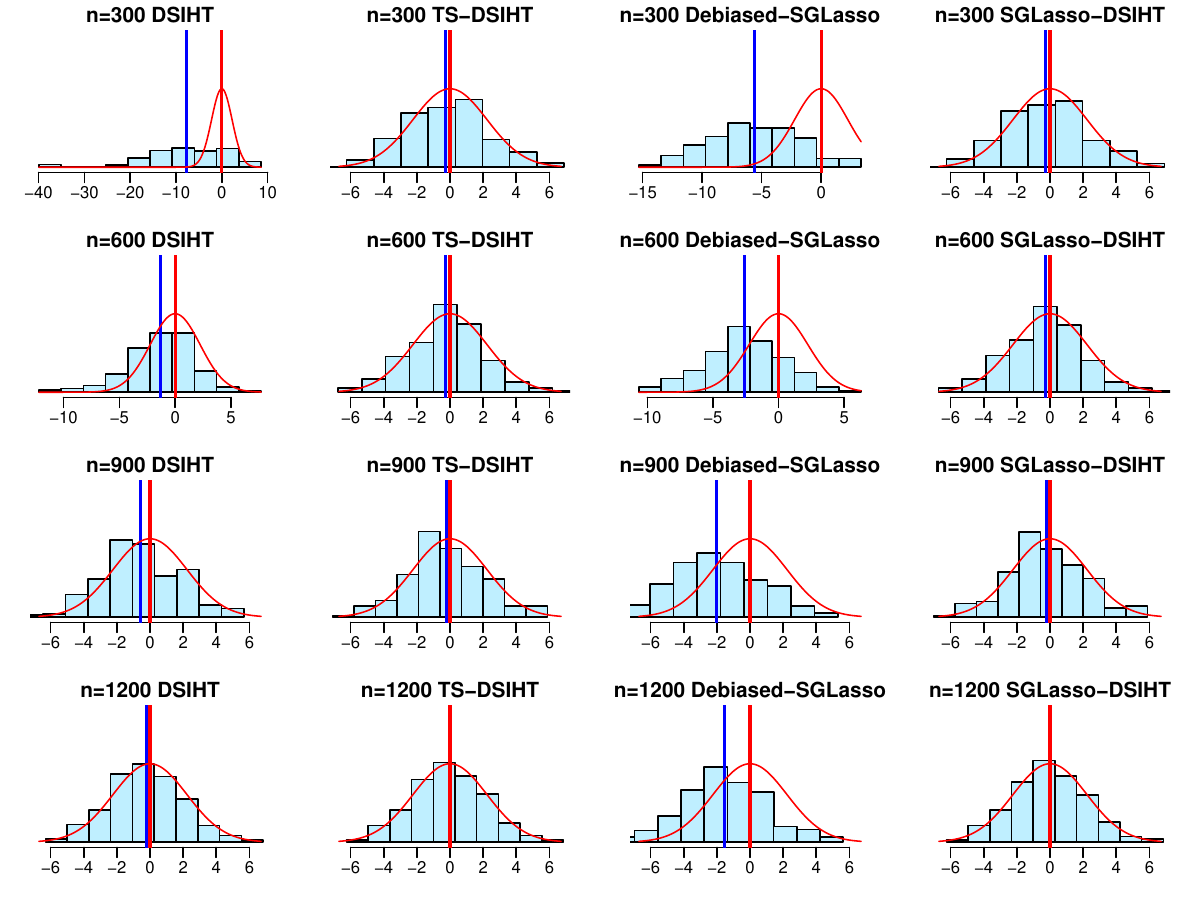}
\caption{The histograms of $\sqrt n  \sum_{i=1}^5\left(\hat\beta_{(i,3)} - \beta_{(i,3)}^* \right)$ with 300 Monte Carlo simulations conducted for each sample size. Blue vertical lines represent the sample means, and red vertical lines represent the population means.
}\label{fig22}
\end{figure}

Figure \ref{fig21} and \ref{fig22} compare the histograms of the DSIHT, TS-DSIHT, Debiased-SGLasso, and SGLasso-DSIHT methods, superimposed with the Gaussian density curves (red curves). 
The results show that as the sample size $n$ increases, all four methods exhibit asymptotic normality. 
However, the TS-DSIHT and SGLasso-DSIHT methods produce distributions that are closer to the normal shape than those of the DSIHT and Debiased-SGLasso methods. 
This indicates that (i) our proposed second-stage iteration improves asymptotic properties, and (ii) the procedure could be broadly applicable as a refinement step to enhance the statistical behavior of other methods. 
We provide further evidence for point (ii) in the next experiment.
}

\subsubsection{Universality of the second-stage iteration}

{
Building upon the analysis in Remark \ref{remark: general} and the simulation results from Section \ref{simu: asy}, we now verify the universality of our second-stage iteration in Algorithm \ref{scaledIHT}.
We use the SGLasso and CMCP estimators, respectively, as a warm start for our second-stage algorithm, and then obtain the refined estimators, which we term SGLasso-DSIHT and CMCP-DSIHT.
We adopt the same parameter settings as in Section \ref{sec: simu1}, and the simulation results are presented in Figure \ref{fig15}.
The SGLasso-DSIHT and CMCP-DSIHT estimators demonstrate substantially better performance in both estimation and support recovery compared to their initial estimators.
As the signal strength increases, these refined estimators, along with TS-DSIHT, all achieve the oracle estimation rate and exact support recovery.
This indicates that our proposed second-stage iteration is effective and has a general applicability as a refinement method to debias or enhance other estimators.

}
 
\begin{figure}[t]
\centering
\includegraphics[scale=0.6]{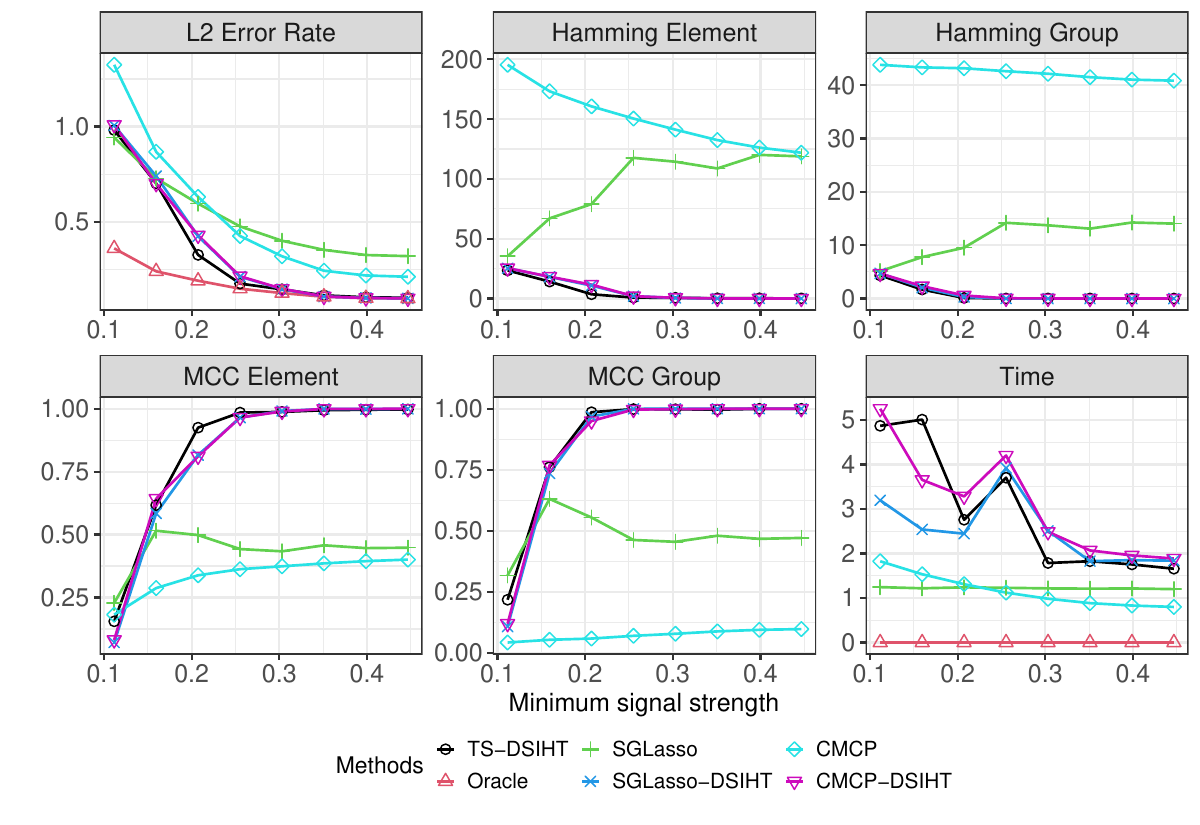}
\caption{Refinement effect of the second-stage DSIHT with increasing signal strength. Each point is averaged from 300 Monte Carlo simulations.
In any method that incorporates the second-stage DSIHT iteration, this specific part relies on the grid points in \eqref{eq: grids} and employs 5-fold cross-validation for data-driven estimation.
}\label{fig15}
\end{figure}

\subsection{Simulation 2: the influence of sparsity levels}\label{num2}
This subsection analyzes the influence of sparsity level $s$ and $s_0$ in the estimation and support recovery.
We assume $n=500$ and $ss_0=48$, and the remaining parameters $d=40,~ m=50, ~\sigma=1$ are identical to those in Section \ref{num1}.
We consider the double sparse spaces under the following three cases, respectively:
\begin{itemize}
    \item \textbf{Case A.} $s_0=4,~ s=12$.
    \item \textbf{Case B.} $s_0=8,~ s=6$.
    \item \textbf{Case C.} $s_0=12,~ s=4$.
\end{itemize}

To mitigate confounding effects, we assume equal signal strength across support sets in both cases, i.e., $\beta_{ij}^*=a$ for all $(i,j)$ in the support set of each case.
We now analyze the performance of TS-DSIHT (Algorithm \ref{scaledIHT}) under these three cases (TS-DSIHT A, TS-DSIHT B, and TS-DSIHT C), and assess their deviation from the corresponding Oracle estimators (Oracle A, Oracle B, and Oracle C).
The $\ell_2$ error rate $\|\hat \beta -\beta^* \|_2/\| \beta^*\|_2$, element-wise and group-wise Hamming loss, and Matthews Correlation Coefficients are used to measure the effectiveness.

With the fixed $ss_0=48$, Figure \ref{fig:fig2} shows that a larger $s_0$ (Case C) leads to a weaker elemental signal strength required for the element-wise recovery (see subplots ``Hamming Element '' and ``MCC Element''), consistent with the element-wise minimum signal condition in Proposition \ref{T8}, i.e., of the signal rate 
$\frac{\sigma}{\sqrt n} \sqrt{\frac{\log m}{s_0} + \log(sd)}.$
However, a larger $s_0$ also results in a stronger group signal strength required for the group recovery, consistent with the group-wise minimum signal condition in Proposition \ref{T8}, i.e., of the rate $ \frac{\sigma}{\sqrt n} \sqrt{ \log m + s_0\log(sd) }.$
Therefore, Case A is favored in support group recovery (see subplots ``Hamming Group '' and ``MCC Group''). 
Hence, as the intra-group sparsity $s_0$ increases, achieving equivalent support recovery outcomes necessitates a more stringent requirement on group strength, while the requirement on element-wise strength becomes more relaxed.

\begin{figure}[t]
\centering
\includegraphics[scale=0.6]{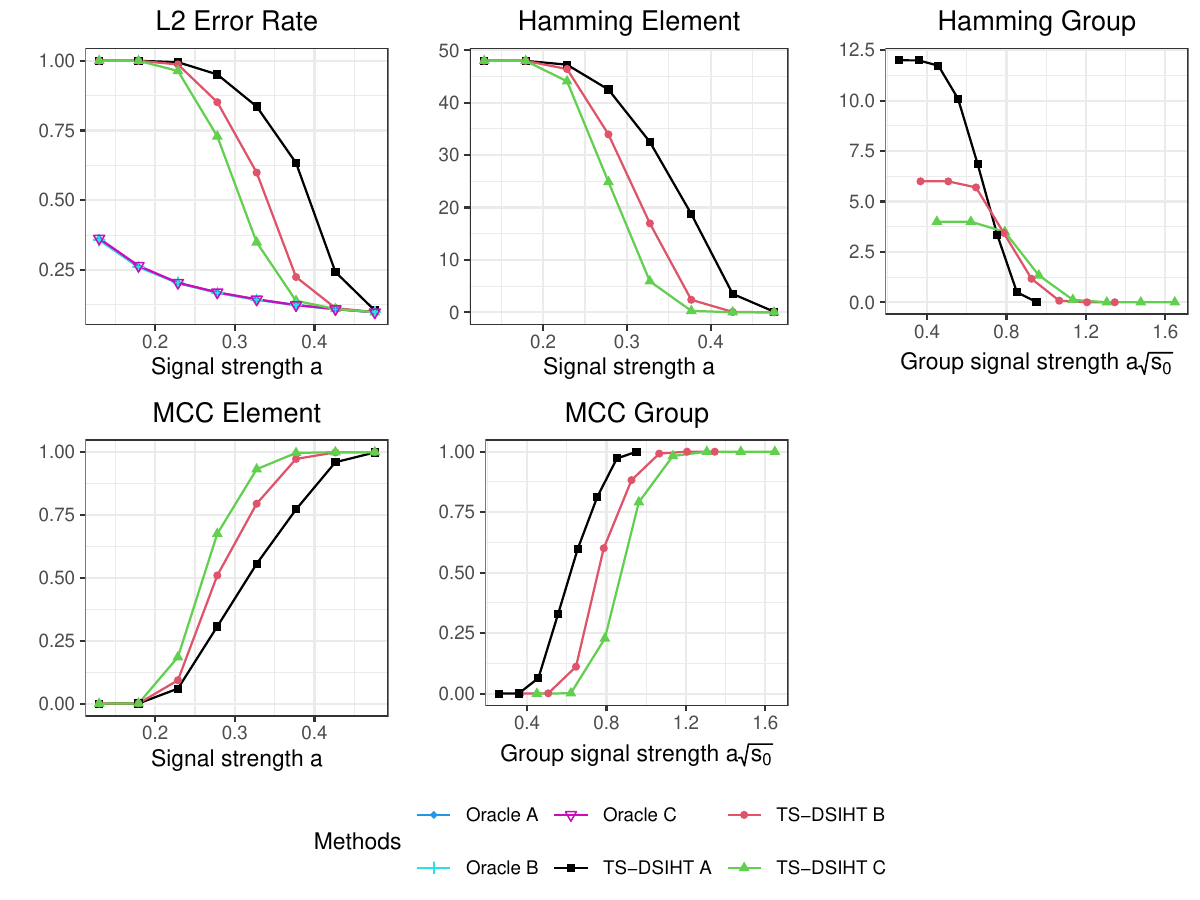}
\caption {Performance metrics with different sparsity levels.
    The x-axis in the first two subplots represents the element signal strength $a$, and in the third subplot, it represents the group signal strength $a\sqrt{s_0}$.
    Each point is averaged from 300 Monte Carlo simulations.}\label{fig:fig2}
\end{figure}

\subsection{{Real data analysis}}
We finally evaluate the methods on a supermarket sales dataset \cite{wang2009} containing $n=464$ daily observations and 6,398 product-level sales volume.
The response is the daily customer count. 
We standardize all variables and keep the 300 predictors with the largest absolute Pearson correlation with the response.
Each selected predictor is then expanded into an 8-term B-spline basis, yielding a double sparse design with $m=300$ and $d=8$ that can capture flexible, potentially nonlinear effects of product sales.
 
For performance assessment, we performed 100 random 80/20 train/test splits (371 observations for training and 93 for testing in each split). 
Each fitted model is measured on the test set using the mean squared prediction error (MSPE). 
Table \ref{tab: real} reports the mean and standard deviation (over 100 random splits) of MSPE, the number of selected groups, and the number of selected elements.
TS-DSIHT achieves the lowest average MSPE and selects a substantially smaller model than most of the competing methods. 
These results underscore the practical advantages of our method: better predictive accuracy together with a relatively interpretable model.

\begin{table*}[htbp]
\caption{The computational results of each method based on 100 random partitions. The standard deviation is shown in parentheses. }
\label{tab: real}
\resizebox{\linewidth}{!}{\begin{tabular}{@{}cccc@{}}
\hline
\textbf{Method}  & \textbf{MSPE} & \textbf{Number of selected groups} & \textbf{Number of selected elements} \\  \hline
\textbf{TS-DSIHT} & {\bf0.468 (0.105)} & 3.860 (3.315) & 21.240 (15.588) \\ 
\textbf{DSIHT} &  0.503 (0.121) & 2.290 (1.066)  & 8.510 (4.596) \\  
\textbf{SGLasso} & 0.483 (0.100) & 9.930 (2.6789) & 42.530 (11.788) \\ 
\textbf{Debiased-SGLasso} &  0.914 (0.183) & 300 (0) & 2400 (0) \\ 
\textbf{CMCP} & 0.507 (0.121) &50.920 (5.134) & 63.020 (6.734) \\  
\textbf{IHT-element} &  0.782 (0.182) & 223.980 (124.509) & 225.240 (125.172)\\ 
\textbf{Lasso-element} & 0.478 (0.093) & 14.270 (4.194) &  17.840 (5.773) \\ \hline
\end{tabular}
}
\footnotesize\textit{Note:} Debiased-SGLasso yields a dense solution and therefore selects all 300 groups and 2,400 variables in every split.
\end{table*}

\section{Conclusion and discussion}\label{sec:con}

In this paper, we propose the specific minimum signal conditions that serve as both the sufficient and necessary conditions for exact recovery in the double sparse model.
From the sufficiency perspective, we demonstrate that a two-stage DSIHT algorithm achieves oracle properties under these minimum signal conditions, ensuring exact recovery and asymptotic normality.
From the necessity perspective, we show through minimax lower bounds based on Hamming risk that no algorithm can achieve exact recovery if these signal conditions are violated.
Our work fills a critical gap in the minimax optimality theory on support recovery of the double sparse model. 
We also establish the minimax rate optimality of the double sparse IHT procedure for exact recovery.
However, some constants in this paper are not exact, meaning that practical implementations of the algorithm do not need to strictly adhere to them.

Additionally, Proposition \ref{T8} and Theorem \ref{T9} reveal an advantage of the IHT-type estimator over convex penalized estimators like the Lasso: 
The IHT-type algorithm permits precise control over estimation error in each iteration, facilitating an in-depth analysis of the relationship between the minimum signal condition and support recovery performance. 
We believe this intuitive non-convex approach could be effective in a broader range of models, including {multi-attribute} graphical models, {sparse plus} low-rank matrix regression, and other general problems under double sparse structure. We leave these meaningful problems for future work.

\bibliographystyle{unsrtnat} 
\bibliography{bibfile.bib}       

\clearpage

\begin{appendix}
 
\section{Proof of the oracle properties}\label{appA}
This appendix focuses on the proofs of the upper bound.
Firstly, we introduce some abbreviations.

\subsection{Some abbreviations and preliminaries} 
Table \ref{sphericcase} introduces some useful abbreviations in the following proof.
For ease of display, recall that we use the double index to locate the entry in $\beta^*$, i.e., use $\beta_{ij}^*$ to represent the $i$-th signal in the $j$-th group $G_j$.

\begin{table*}[htbp]
\caption{Symbols and their meanings.}
\label{sphericcase}
\resizebox{\linewidth}{!}{ \begin{tabular}{@{}cc@{}}
\hline
Symbol   & Meaning \\
\hline
$\mathcal S (as,bs_0) $ &The space consisting of all the support sets of ($as, bs_0$)-sparse vectors. \\  
    $S^*=\left\{(i,j): \beta^*_{ij} \ne 0 \right\}$& The true support set of $\beta^*$.\\  
    $G^*=\left\{j: \beta^*_{G_j} \ne \mathbf 0_{|G_j|} \right\}$& The group index set of the true support groups.\\  
    $S_{G^*} = \bigcup_{j \in G^*} G_j$& The index set of all support groups.\\ 
    $s_j = \| \beta^*_{G_j}\|_0 = |S^* \cap G_j|$ & The sparsity level, i.e., the support number in group $G_j$.  \\  
    $\tilde S^{t}$& The support set of the estimation $\tilde \beta^t$ (in the $t$-th iteration) by Algorithm 2.\\  
    $\tilde S_{OG}^t= (S_{G^*})^c \cap \tilde S^t$& The discovered set in the falsely discovered groups in the $t$-th iteration (by Algorithm 2).\\  
    $\tilde S_{IG}^t= S_{G^*} \cap (S^*)^c \cap \tilde S^t$&The falsely discovered set in the true support groups in the $t$-th iteration (by Algorithm 2).\\  
    $\tilde G_{OG}^{t}$& The group index set of the falsely discovered group in the $t$-th iteration (by Algorithm 2).\\  
    $ \Delta(s,s_0) := \frac1{s_0} \log({{\rm e} m}/{s}) + \log( {{\rm e} d}/{s_0})$ & A useful abbreviation associated with ordinary minimax rate.\\  
\hline
\end{tabular} } 
\end{table*}

To simplify the notations, we use the double index $(i,j)$ to locate the $i$-th variable in the $j$-th group $G_j$.
In the beginning, we introduce an essential definition $\tilde H^{t+1} := \tilde \beta^{t } + \frac1nX^\top \left(Y-X\tilde \beta^{t} \right) \in \mathbb R^p$.
It can be decomposed as follows: 
\begin{equation}\label{eq: grad learning}
\begin{aligned}
\tilde H^{t+1} &=  \tilde \beta^{t } + \frac1nX^\top \left(Y-X\tilde \beta^{t } \right) \\
&=  \tilde \beta^{t } + \frac1nX^\top \left(X \tilde \beta^* + \sigma \tilde \xi -X\tilde \beta^{t } \right) \\
&= \tilde \beta^* +  {\Phi} (\tilde\beta^*- \tilde \beta^t) + \tilde \Xi,
\end{aligned}
\end{equation}
where ${\Phi} = \frac1nX^\top X -\mathbf I_p \in \mathbb R^{p \times p}$.
$\tilde \beta^*$ is the oracle estimator, that is, $\tilde \beta_{S^*}^* = (X_{S^*}^\top X_{S^*})^{-1}X_{S^*}^\top Y \in \mathbb R^{|S^*|}$, and $\tilde \beta_{(S^*)^c}^*=\mathbf 0$. 
And $\sigma \tilde \xi = Y - X \tilde \beta^*$.
Define 
$$\tilde \Xi := \frac\sigma n X^\top \left(\mathbf I_n - X_{S^*} (X_{S^*}^\top X_{S^*})^{-1}X_{S^*}^\top \right) \xi \in \mathbb R^p.$$
By sub-Gaussian property of $\xi$, we conclude that $\tilde \Xi_{ij} =0$ holds for all $ (i,j) \in S^* $, and $\mathbf E \exp \left( \lambda \tilde \Xi_{ij} \right) \le \exp(\lambda^2 \sigma^2/(2n) )$ holds for all $\lambda \in \mathbb R$ and $(i,j) \in (S^*)^c$. 
Besides, by $\tilde \beta_{S^*}^* - \beta^*_{S^*}
= \sigma (X_{S^*}^\top X_{S^*})^{-1}X_{S^*}^\top \xi$, we have $ \mathbf E \exp \left( \lambda (\tilde \beta_{ij}^* - \beta^*_{ij}) \right) \le \exp\left(\frac{\lambda^2 \sigma^2}{2 (1-\delta)n} \right)$ holds for all $\lambda \in \mathbb R$ and $ (i,j) \in S^*$.

For ease of display, we denote ${\Phi}_{(k,j)} \in \mathbb R^{1 \times p}$ as the row of ${\Phi}$ corresponding to the $k$-th variable in the $j$-th group, i.e., ${\Phi}_{(k,j)}$ is the $\big((j-1)d+k\big)$-th row vector of ${\Phi} \in \mathbb R^{p \times p}$.
Plus, we show a technique that will be frequently used in the following proof. 
{
Recall $A = A(\kappa, \delta ) := \frac{8 \delta^2}{(\kappa - \delta)^2}$.
For every $\tilde\beta^* - \tilde\beta^{t}$ and $S$ satisfying  $ S \cup \text{supp}(\tilde\beta^* - \tilde\beta^{t}) \in \mathcal S \left( (1+2A)s, \frac{1+4A}{1+2A} s_0 \right)$, based on DSRIP$\left( (1+2A)s, \frac{1+4A}{1+2A} s_0, \delta \right)$ condition (recall Definition \ref{df2}), we have}
\begin{equation}\label{tech}
\begin{aligned}
\sum_{(k,j) \in S } \langle{\Phi}_{(k,j)}^\top, \tilde\beta^* - \tilde\beta^{t} \rangle^2
&\le  \sum_{(k,j) \in S'} \langle{\Phi}_{(k,j)}^\top, \tilde\beta^* - \tilde\beta^{t} \rangle^2 \\
&= (\beta^* - \tilde\beta^{t})_{S'}^\top {\Phi}_{S'S'}^\top {\Phi}_{S'S'}(\tilde\beta^* - \tilde\beta^{t})_{S'} \\
&\le \left\| {\Phi}_{S'S'} \right\|_2^2 \cdot \left\|  \tilde\beta^* - \tilde\beta^{t}\right\|_2^2 \\
&\le \delta^2 \left\| \tilde \beta^* - \tilde\beta^{t}\right\|_2^2,
\end{aligned}
\end{equation}
where $S' = S \cup \text{supp}(\tilde\beta^* - \tilde\beta^{t})$.

{In the following subsections, we first prove Proposition \ref{T8}, Theorem \ref{T8.5}, and Theorem \ref{T9}, and then give a brief proof of Theorem \ref{upperbound} because it can be seen as a simpler version of Proposition \ref{T8}.
The final subsection \ref{sec: random} provides some discussion on the random setting.}

\subsection{Proof of Proposition \ref{T8}}\label{T4proof}
In the beginning, we show the relationship between Proposition \ref{T8}, Theorem \ref{upperbound}, and some lemmas in Figure \ref{roadmap2}.

\begin{figure}[t]
\centering
\begin{tikzpicture}[scale = 1]
\node [draw,rectangle](l)at(-0.5,6){ Theorem \ref{T8.5} (properties of $\tilde \beta^t$ for sufficiently large $t$)};
	\node [draw,rectangle](a)at(-0.5,4){ Proposition \ref{T8} (properties of $\tilde \beta^t$)};
	\node [draw,rectangle](b)at(-5,2){ Theorem \ref{upperbound} (properties of $\tilde \beta^0$) };
	\node [draw,rectangle](e)at(-0.5,2){ Lemma \ref{good event} };
	\node [draw,rectangle](f)at(2.5,2){ Lemma \ref{exactOGChi2} };
	\node [draw,rectangle](g)at(5.5,2){ Lemma \ref{exactsupporterror} };
	\node [draw,rectangle](i)at(-0.5,0){ Lemma \ref{exactOGChi2:1st} };
        \node [draw,rectangle](j)at(2.5,0){ Lemma \ref{OGChi2:2nd} };
        \node [draw,rectangle](k)at(5.5,0){ Lemma \ref{OGChi2:3rd} };

        \draw [-{Stealth[length=3mm]}](a)--(l);
        \draw [-{Stealth[length=3mm]}](e)--(a);
        \draw [-{Stealth[length=3mm]}](b)--(a);
        \draw [-{Stealth[length=3mm]}](e)--(b);
        \draw [-{Stealth[length=3mm]}](f)--(a);
        \draw [-{Stealth[length=3mm]}](g)--(a);
        \draw [-{Stealth[length=3mm]}](e)--(i);
        \draw [-{Stealth[length=3mm]}](i)--(f);
        \draw [-{Stealth[length=3mm]}](j)--(f);
        \draw [-{Stealth[length=3mm]}](k)--(f);
\end{tikzpicture}
\caption{A road map to complete the proof of Proposition \ref{T8} and Theorem \ref{T8.5}. The directed edges mean that the tails of the edges are used in the heads of the edges. }\label{roadmap2}
\end{figure}

\paragraph{Preliminary}
Define
\begin{equation}\label{eq: def mu}
\begin{aligned}
    \mu :=\max\left\{ \frac{\kappa C_\lambda}{\delta}, ~
    \sqrt{40+ \frac{120\delta^2}{(1-\delta)^2}}\right\}\cdot \sqrt{\frac{ \sigma^2}n \left\{ \frac{\log(em)}{s_0} + \log(esd) \right\} } ,
\end{aligned}
\end{equation}
where $C_\lambda$ and $\lambda_{(\infty)}$ are related to the parameter setting in Theorem \ref{upperbound}.
The definition of $\mu$ will be used in \eqref{exactIG}, Lemma \ref{exactOGChi2}, and Lemma \ref{exactOGChi2:1st}.

First, we show the probability inequality used in this proof as follows.
\begin{equation}\label{eq:exactprob}
\begin{aligned}
& \mathbf P\left\{\;
\begin{aligned}
& \sup_{S \in\mathcal S(1,s_0)}\left\| \tilde\Xi_S \right\|_2^2 
\le \frac{10\sigma^2 s_0 \Delta(1,s_0)}{n},\\
&\|\tilde\beta^*-\beta^*\|_2^2 
\le \frac{5 \sigma^2 s s_0 \Delta(s,s_0)}{n(1-\delta) },\\
&\max_{(i,j)\in S^*}|\tilde\beta_{ij}^*-\beta_{ij}^*|
\le \frac{\mu}3,\\
&\max_{(i,j)\in S_{G^*}}|\tilde\Xi_{ij}|
\le \underbrace{\sqrt{\frac{10\sigma^2}{n}\left[ \Delta(1,s_0) + \log(ss_0)\right]}}_{=: \mu'}
\end{aligned}
\;\right\}\\
&\quad\quad \ge 1-O\left( e^{-\frac13 \left[ \Delta(1,s_0)+ \log(ss_0) \right] } \right),
\end{aligned}
\end{equation}
where the first inequality follows from Lemma \ref{good event}, with taking $s'=1, s_0' = s_0$, the secong inequality follows from a similar result as in \eqref{ineq:t}, with $\left\| X_{S^*}^\top X_{S^*}\right\|_2 \le \frac{1}{n(1-\delta)}$ and taking $t = ss_0 \Delta(s,s_0)$.

{
The third and fourth inequalities follow from the sub-Gaussian property, for example, since
\begin{align*}
    \frac\mu3 \ge& \frac13 \sqrt{4+ \frac{12\delta^2}{(1-\delta)^2}}\cdot \sqrt{\frac{10\sigma^2 \left[ \Delta(1,s_0)+ \log(ss_0) \right]}{n} }\\
    \ge & \frac{\sqrt{30}\sigma}{3(1-\delta)} \sqrt{\frac{\left[ \Delta(1,s_0)+ \log(ss_0) \right]}n  },
\end{align*}
we have 
\begin{align*}
P\left( \max_{(i,j)\in S^*}|\tilde\beta_{ij}^*-\beta_{ij}^*|
\ge \frac{\mu}3 \right)
= & P\left( \bigcup_{(i,j) \in S^*} \left\{ |\tilde\beta_{ij}^*-\beta_{ij}^*| \ge  \frac{\mu}3 \right\}\right)\\
\le & 2ss_0 \cdot  \exp\left(-\frac{5 \left[ \Delta(1,s_0)+ \log(ss_0) \right] }{3(1-\delta)} \right)\\
\le & 2\exp\left\{-\frac13 \left[ \Delta(1,s_0)+ \log(ss_0) \right] \right\} .
\end{align*}
}
Additionally, 
\begin{equation}\label{eq: inside support group}
P\left( \max_{(i,j)\in S_{G^*}}|\tilde\Xi_{ij}| \ge \mu'  \right) 
\le  2sd \cdot  \exp\left(- \frac{n{\mu'}^2}{2\sigma^2} \right)
= 2 \exp\left(- \frac{n {\mu'}^2}{2\sigma^2} + \log(sd)\right).
\end{equation}

{
\paragraph{Mathematical induction}
With $A =\frac{8 \delta^2}{(\kappa - \delta)^2}$, we next prove the following results hold for all $t \ge 0$: 
\begin{equation}\label{eq: 2.2dsresult}
\begin{aligned}
    \|\tilde \beta^t - \tilde \beta^* \|_2 \le&~\left[ \sqrt{5/6} + \left(1-\sqrt{5/6} \right) \delta \right]^t \|\tilde \beta^0 - \tilde \beta^* \|_2\quad,\\
\tilde S_{OG}^{t} := (S_{G^*})^c 
 \cap \tilde S^{t} \in& ~\mathcal S (As,s_0) \quad,\\
\tilde S_{IG}^{t} := S_{G^*} \cap \tilde S^{t } \cap (S^*)^c \in& ~\mathcal S (s, A s_0) \quad,
\end{aligned}
\end{equation}
Under the event mentioned in \eqref{eq:exactprob}, we prove \eqref{eq: 2.2dsresult} by using mathematical induction. 
We first check \eqref{eq: 2.2dsresult} in $t=0$: 
It is straightforward that the first results hold, and by Theorem \ref{upperbound}, the initial estimator $\tilde \beta^0$ satisfies $\tilde S_{OG}^{t}  \in \mathcal S (As,s_0) $ and $\tilde S_{IG}^{t} \in \mathcal S (s, A s_0)$ with a probability greater than $1-O\left( e^{-\frac13 \left[ \Delta(1,s_0)+ \log(ss_0) \right] } \right)$.
}

Then, we assume that \eqref{eq: 2.2dsresult} holds in the $t$-th iteration ($t\ge 0$), and we need to prove that \eqref{eq: 2.2dsresult} still holds in the $(t+1)$-th iteration.
Next two steps prove $ \tilde S_{OG}^{t+1} \in \mathcal S(As,s_0) $ and $ \tilde S_{IG}^{t+1}  \in \mathcal S (s,As_0 )$ in $(t+1)$-th iteration. 
The way is to prove by contradiction.

\textbf{Step 1 (Control the shape of $ \tilde S_{OG}^{t+1}$ ).} 
By using contradiction, at first, we assume the opposite, that is, $ \tilde S_{OG}^{t+1} $ contains more than $As$ groups (i.e., these groups are not true support groups but the coefficients corresponding to them are not estimated as zero in the $(t+1)$-th iteration). 
Then, we can select arbitrary $As$ falsely discovered groups and construct an index set $S_{OG}' \in \mathcal S (As,s_0)$. The details of the construction process are described as follows:

For any falsely discovered group $G_j$ (i.e., $j \notin G^*$), if $\|\tilde \beta_{G_j}^{t+1}  \|_0 \ge s_0$, then choose arbitrarily $s_0$ non-zero entries from $\tilde \beta_{G_j}^{t+1}$ into $S_{OG}'$;
if $\|\tilde \beta_{G_j}^{t+1}  \|_0 < s_0$, then choose all non-zero entries from $\tilde \beta_{G_j}^{t+1}$ into $S_{OG}'$.
Repeat this operation $As$ times for any $As$ falsely discovered groups, and we obtain a $(As, s_0)$-sparse set $S_{OG}'$, i.e., $S_{OG}' \in \mathcal S(As,s_0)$.

Then, based on the element-wise thresholding operator $\mathcal T_{\mu }^{(1)}$ and the group-wise thresholding operator $\mathcal T_{\mu,s_0}^{(2)}$, for any group $G_j$ selected by $S_{OG}'$, it holds that 
$$\| \tilde \beta_{G_j \cap S_{OG}'}^{t+1} \|_2^2 
= \left\| \mathcal T_{\mu,s_0}\left(\tilde H^{t+1} \right)_{G_j \cap S_{OG}'} \right\|_2^2 
\ge s_0 \mu^2.$$
{Based on the decomposition $\tilde H^{t+1}_{ij}
 =  \langle{\Phi}_{(i,j)}^\top,\tilde \beta^*- \tilde \beta^{t}\rangle +  \Xi_{ij}$ (holds for all $ (i,j) \in (S^*)^c$) and triangle inequality, we derive that
\begin{equation}\label{exactOG}
    \begin{aligned}
    \sqrt{ Ass_0 }\mu
    \le& \sqrt{\sum_{(i,j) \in S_{OG}'} \langle{\Phi}_{(i,j)}^\top, \tilde\beta^* - \tilde\beta^{t} \rangle^2 } 
        + \sqrt{\sum_{(i,j) \in S_{OG}'}  \tilde\Xi_{ij}^2 \mathbf1\Big\{ \mathcal T_{\mu,s_0} 
    \big(\tilde H^{t+1}\big)_{ij}\ne 0\Big\} }\\
    \overset{(i)}{\le} & \delta \|\tilde \beta^t - \tilde\beta^* \|_2 +  \frac{1-\delta}{\sqrt3}  \|\tilde \beta^t - \tilde\beta^* \|_2\\
    < & \|\tilde \beta^t - \tilde\beta^* \|_2 \\
     \overset{(ii)}\le& \|\tilde \beta^0- \beta^* \|_2 + \| \beta^* - \tilde\beta^* \|_2\\
  \overset{(iii)}{\le}&\left( 1- \frac{\sqrt{10}}{C_\lambda}\right) \frac{2\sqrt2 \kappa}{\kappa-\delta} \cdot \sqrt{ss_0} \lambda_{(\infty)}
     + \frac{\kappa }{\kappa - \delta} \sqrt{5ss_0} \frac{\lambda_{(\infty)}}{C_\lambda}\\
  <& \sqrt{A ss_0 }\mu,
\end{aligned}
\end{equation}
where inequality (i) follows from \eqref{tech} and Lemma \ref{exactOGChi2}, inequality (ii) follows from the first result in \eqref{eq: 2.2dsresult} in the $t$-th iteration (held by the assumption of induction).
Inequality (iii) follows from Theorem \ref{upperbound} and the second inequality in \eqref{eq:exactprob}, and the final inequality follows the definition of $\mu$, as shown in \eqref{eq: def mu}.
Therefore, we find an absurdity, demonstrating that only fewer than $As$ groups can be falsely discovered in the $(t+1)$-th iteration. 
}

Next, we prove that fewer than $Ass_0$ elements will be falsely discovered outside true groups $G^*$. 
If not so, we can construct set $S_{OG}'' \in \mathcal S(As,s_0)$, which satisfies that $S_{OG}'' \subseteq \tilde S_{OG}^{t+1}$, $|S_{OG}''|=Ass_0$ and each $\tilde \beta_{ij}^{t+1}$ indexed from $S_{OG}''$ is falsely discovered.
The construction is similar to $S_{OG}'$: 
For any falsely discovered group $G_j$,
if $\|\tilde \beta_{G_j}^{t+1}  \|_0 < s_0$, then choose all non-zero entries from $\tilde \beta_{G_j}^{t+1}$ into $S_{OG}''$.
If $\|\tilde \beta_{G_j}^{t+1}  \|_0 \ge s_0$, then choose at least $s_0$ of its nonzero entries in the set $S_{OG}''$---crucially, we carefully calibrate the exact number of selections across all such groups so that the total cardinality
$\bigl|S_{OG}''\bigr|= A ss_0$.
Therefore, based on the element-wise thresholding operator and Lemma \ref{exactOGChi2}, we have 
\begin{equation}\label{step1.5}
    \begin{aligned}
    \sqrt{ Ass_0 }\mu
    \le& \sqrt{\sum_{(i,j) \in S_{OG}''} \langle{\Phi}_{(i,j)}^\top, \tilde \beta^* - \tilde\beta^{t} \rangle^2 } 
        + \sqrt{\sum_{(i,j) \in S_{OG}''} \Xi_{ij}^2 \mathbf1\Big\{ \mathcal T_{\mu,s_0} 
    \big(\tilde H^{t+1}\big)_{ij}\ne 0\Big\} }\\
    {\le} & \|\tilde \beta^t - \tilde \beta^* \|_2 ~ < \sqrt{Ass_0} \mu.
    \end{aligned}
\end{equation}
Then, we similarly find an absurdity as \eqref{exactOG} again.
Therefore, we prove that $ \tilde S_{OG}^{t+1} = S_{G^*}^c \cap \tilde S^{t+1} \in \mathcal S(As,s_0) $.

\textbf{Step 2 (Control the shape of $ \tilde S_{IG}^{t+1} $ ).} 

By using contradiction, at first, we assume the opposite, that is, more than $Ass_0$ entries in $S_{G^*}$ are falsely discovered. 
Then we can construct an index set $S_{IG}' \subseteq \tilde S_{IG}^{t+1}$ satisfying $|S_{IG}'| = Ass_0$ and $S_{IG}' \in \mathcal S(s, As_0)$, with $|\mathcal T_{\mu,s_0} (\tilde H^{t+1})_{ij}| \ge \mu$ for all $ (i,j) \in S_{IG}'$. 
Therefore, based on the element-wise thresholding operator, we obtain 
\begin{equation}\label{exactIG}
   \begin{aligned}
    \sqrt{Ass_0}\mu 
    \le & \sqrt{\sum_{(i,j) \in S_{IG}'} \langle{\Phi}_{(i,j)}^\top, \tilde\beta^* - \tilde\beta^{t} \rangle^2 } \\
        &+\sqrt{\sum_{(i,j)\in S_{IG}'}\tilde\Xi_{ij}^2 \mathbf1\Big\{ |\tilde\Xi_{ij}+\langle{\Phi}_{(i,j)}^\top, \tilde\beta^* - \tilde\beta^{t} \rangle|\ge \mu \Big\} }\\
    \le & \delta \|\tilde \beta^t - \tilde\beta^* \|_2
        +\sqrt{\sum_{(i,j)\in S_{IG}'}\tilde\Xi_{ij}^2 \mathbf1\Big\{ |\tilde\Xi_{ij}|\ge \mu'\Big\} }\\ 
        &+\sqrt{\sum_{(i,j)\in S_{IG}'}\tilde\Xi_{ij}^2 \mathbf1\Big\{ |\tilde\Xi_{ij}|<\mu',~|\tilde\Xi_{ij}+ \langle{\Phi}_{(i,j)}^\top, \tilde\beta^* - \tilde\beta^{t} \rangle |\ge \mu \Big\} }\\
    \overset{(i)}{\le}&  \delta \|\tilde \beta^t - \tilde\beta^* \|_2 +\sqrt{\sum_{(i,j)\in S_{IG}'}\tilde\Xi_{ij}^2 \mathbf1\Big\{ |\tilde\Xi_{ij}|<\mu' \le \frac{1-\delta}{\delta \sqrt6}|\langle{\Phi}_{(i,j)}^\top, \tilde\beta^* - \tilde\beta^{t} \rangle|\Big\} }\\
    \le &  \|\tilde \beta^t - \tilde\beta^* \|_2 ,
\end{aligned}
\end{equation}
where recall $ \mu' = \sqrt{\frac{10\sigma^2}{n}\left[ \Delta(1,s_0) + \log(ss_0)\right]}$ and $\mu \ge \sqrt{4+ \frac{12\delta^2}{(1-\delta)^2}}\cdot\mu'$, and inequality (i) follows from the forth inequality in \eqref{eq:exactprob}. 
Then, by the last three lines in \eqref{exactOG}, we get a contradiction again, demonstrating $\tilde S_{IG}^{t+1} = S_{G^*} \cap \tilde S^{t+1} \cap (S^*)^c \in \mathcal S (s, A s_0)$.

\textbf{Step 3 ($\ell_2$ error inequality of $\tilde \beta^{t+1}$).} 
Now we prove that the $\ell_2$ error bound in the $(t+1)$-th iteration.
Note that
\begin{equation}
\begin{aligned}
    &\tilde \beta_{ij}^{t+1} - \tilde \beta_{ij}^* \\
    =& - (\underbrace{\tilde\beta_{ij}^* + \langle{\Phi}_{(i,j)}^\top, \tilde\beta^* - \tilde\beta^{t} \rangle + \tilde\Xi_{ij}}_{\tilde H_{ij}^{t+1}}) \cdot \mathbf1 \left( (i,j) \notin \tilde S^{t+1} \right) + \langle{\Phi}_{(i,j)}^\top, \tilde\beta^* - \tilde\beta^{t} \rangle + \tilde\Xi_{ij},
\end{aligned}
\end{equation}
where recall $ \tilde\Xi_{ij} =0$ for all $ (i,j)\in S^*$.
{
We then have
\begin{equation} \label{eq: 2.2 l21}
    \begin{aligned}
& \left\|\tilde \beta ^{t+1} - \tilde\beta ^* \right\|_2 \\
\le& \left[ \sum_{(i,j) \in S^*} \left\{\langle\Phi_{(i,j)}^\top, \tilde\beta^* -\tilde\beta^{t} \rangle - \tilde H_{ij}^{t+1} \cdot \mathbf1 \left( (i,j) \notin \tilde S^{t+1} \right) \right\}^2 \right. \\
&\quad + \left.\sum_{(i,j) \in \tilde S^{t+1}\setminus S^*} 
   \left\{\langle\Phi_{(i,j)}^\top,\tilde \beta^* - \tilde \beta^{t} \rangle+ \tilde\Xi_{ij} \right\}^2 \right]^{1/2}\\
\overset{(i)}\le& \sqrt{ \sum_{(i,j) \in \tilde S^{t+1}\cup S^*} \langle\Phi_{(i,j)}^\top, \tilde \beta^* - \tilde \beta^{t} \rangle^2}\\
& + \left[ \sum_{(i,j) \in  \tilde S_{OG}^{t+1}} \tilde \Xi_{ij}^2 \mathbf1\Big\{ \mathcal T_{\mu,s_0} \big(\tilde H^{t+1}\big)_{ij}\ne 0\Big\}  
   + \sum_{(i,j) \in  \tilde S_{IG}^{t+1}} \tilde \Xi_{ij}^2 \mathbf1\Big\{ \mathcal T_{\mu,s_0} \big(\tilde H^{t+1}\big)_{ij}\ne 0\Big\}\right.\\
& \quad \quad \left. + \sum_{(i,j) \in S^*} \left\{ \tilde H_{ij}^{t+1} \right)^2 \mathbf1 \left( (i,j) \notin \tilde S^{t+1} \right\} \right]^{1/2}\\
\overset{(ii)}\le & \left( 1+ \sqrt{\frac56} \frac{1-\delta}{\delta}\right) \delta \left\| \tilde \beta^{t}- \tilde \beta^* \right\|_2 \\
\le &\left[ \sqrt{5/6} + \left(1-\sqrt{5/6} \right) \delta \right]^{t+1} \left\|\tilde \beta^0 - \tilde \beta^* \right\|_2,
\end{aligned}
\end{equation}
where inequality (i) follows from H\"{o}lder inequality, and inequality (ii) follows from Lemma \ref{exactOGChi2}, \eqref{exactIG}, and Lemma \ref{exactsupporterror}.
The last inequality follows from the first result in \eqref{eq: 2.2dsresult} in the $t$-th iteration, which holds by the assumption of induction.}

Therefore, we prove that all three results in \eqref{eq: 2.2dsresult} still hold in the $(t+1)$-th iteration.
This completes the proof of Proposition \ref{T8}.

\subsection{Proof of Theorem \ref{T8.5}}

We follow the results derived in Proposition \ref{T8}.
By the sample size assumption and inequalities (ii), (iii) in \eqref{exactOG}, we have $\left\| \tilde \beta^0 - \tilde \beta^* \right\|_2 < \sqrt{Ass_0} \mu \le C$ for a sufficient large absolute constant $C>0$, therefore
\begin{equation}\label{eq: decay rate}
\left\|\tilde \beta^{t} - \tilde \beta^* \right\|_2 \le\left[ \sqrt{5/6} + \left(1-\sqrt{5/6} \right) \delta \right]^t\cdot  C\sigma, \quad \text{for every } t \ge 0.
\end{equation}
Then, let $t >  C_{\delta}\log n$, and we conclude
\begin{equation}\label{eq: error in Th2.2}
  \|\tilde \beta^{t} - \tilde \beta^*  \|_2 < \frac{ \sigma }n,
\end{equation}
which leads that 
{
\begin{equation}\label{eq:exactfinal}
\left\|\tilde \beta^t -\beta^* \right\|_\infty
\le \left\|\tilde \beta^t -\tilde \beta^* \right\|_\infty + \left\|\tilde \beta^* - \beta^* \right\|_\infty
< \mu/2,
\end{equation}
where the last inequality follows from the third inequality in \eqref{eq:exactprob} and $\mu > \frac{6 \sigma}{\sqrt n}$.}

Then, we conclude that:
\begin{enumerate} 
    \item For every $ (i,j) \in (S^*)^c$, by \eqref{eq:exactfinal}, we have $|\tilde \beta^t_{ij} | <\mu/2 < \mu $, which proves that we can not falsely discover any entry by using thresholding parameter $\mu$ (just recall the element-wise operator $\mathcal T_{\mu}^{(1)}$), therefore $\tilde \beta^t_{ij} =0$.

    \item For every $ (i,j) \in S^*$, by \eqref{eq:exactfinal} and the element-wise beta-min condition, we derive that
    \begin{equation*}
        | \tilde \beta_{ij}^t| \ge | \beta_{ij}^*| - \left\|\tilde \beta^t -\beta^* \right\|_\infty
        > 3\mu/2 >0,
    \end{equation*}
    which proves that the whole support set $S^*$ is recovered with sign consistency.
\end{enumerate}

Therefore, we prove that with probability greater than $ 1-O\left( e^{-\frac13 \left[ \Delta(1,s_0)+ \log(ss_0) \right] } \right)$, the true support set $S^*$ can be exactly recovered without any falsely discovered entry or group for all $ t > C\log n$.

{
We now show a sharper estimation rate by using Theorem 2.1 in \cite{HK12}, that is, for all $t >0$, we have 
$$
\begin{aligned}
    P\Bigg(  &\left\| (X_{S^*}^\top X_{S^*})^{-1}X_{S^*}^\top \xi  \right\|_2^2 \\
    &\quad\ge \text{tr}((X_{S^*}^\top X_{S^*})^{-1} )+
     2\sqrt{\text{tr}\left ((X_{S^*}^\top X_{S^*})^{-1} (X_{S^*}^\top X_{S^*})^{-1}  \right)t} \\
     &\qquad+ 2 \left\| (X_{S^*}^\top X_{S^*})^{-1} \right\|_2 t \Bigg)
     \le e^{-t}.
\end{aligned}
$$
Based on DSRIP$(s,s_0, \delta)$ condition, we have $\Lambda_i\left ((X_{S^*}^\top X_{S^*})^{-1} \right)\le \frac1{n(1-\delta)}$,
which leads
\begin{align*}
\text{tr}((X_{S^*}^\top X_{S^*})^{-1} ) &\le \frac{ss_0}{n(1-\delta)},\\
\text{tr}\left ((X_{S^*}^\top X_{S^*})^{-1} (X_{S^*}^\top X_{S^*})^{-1}  \right) &\le  \frac{ss_0}{n^2(1-\delta)^2}, \\
\left\| (X_{S^*}^\top X_{S^*})^{-1} \right\|_2 & \le  \frac{1}{n(1-\delta)}.
\end{align*}
Therefore, for every $\epsilon \in (0,1)$, by taking $t = \log(1/\epsilon)$, we have 
$$
  P\Bigg( \frac1{\sigma^2} \left\|\tilde \beta^* - \beta^* \right\|_2^2 \le \frac{2ss_0}{n(1-\delta)} + \frac{3\log(1/\epsilon)}{n(1-\delta)}  \Bigg)
     \ge 1-\epsilon,
$$
combining with \eqref{eq: error in Th2.2} we complete the proof of Theorem \ref{T8.5}.
}

\subsection{Proof of Theorem \ref{T9}}
By Cram\'er--Wold Theorem, we only need to prove the asymptotic normality for $\sqrt n g^\top (\tilde\beta^t_{S^*} - \beta^*_{S^*})$, where $g \in \mathbb R^{|S^*| \times 1}$ is an arbitrary vector with bounded Euclid norm.
By taking  $t >  C_\delta \log n$, the output $\tilde\beta^t$ of Algorithm \ref{scaledIHT} leads that
\begin{align*}
    \left|\sqrt n g^\top (\tilde\beta^t_{S^*} - \beta^*_{S^*}) 
-\sqrt n  g^\top (\tilde\beta^*_{S^*} - \beta^*_{S^*}) \right|
\le&\sqrt n \|g\|_2 \cdot  \left\| \tilde\beta^t_{S^*} - \tilde\beta^*_{S^*} \right\|_2\\
\le& \|g\|_2 \frac{\sigma}{\sqrt n}
\to 0, 
\end{align*}
as $n,~ p(=d\times m) \to \infty$, where the last inequality follows from Step 4 in the proof of Theorem \ref{T8.5}.
Therefore we only need to focus on the term $\sqrt ng^\top (\tilde\beta^*_{S^*} - \beta^*_{S^*})$. By definition, we get 
\begin{equation} 
\sqrt n g^\top (\tilde\beta^*_{S^*} - \beta^*_{S^*})
= \sum_{k=1}^n  \sigma\sqrt n \cdot g^\top (X_{S^*}^\top X_{S^*})^{-1} X_{S^*}^{(k)} \xi_k
:= \sum_{k=1}^n \zeta_k,
\end{equation}
where we denote by $ X_{S^*}^{(k)} \in \mathbb R^{|S^*| \times 1}$ the $k$-th observation of the covariates on the support $S^*$.

Additionally, by DSRIP condition, we have 
\begin{equation}
    \sum_{k=1}^n\text{Var} \left (  \zeta_k \right ) 
    = \sigma^2 nc_\xi g^\top (X_{S^*}^\top X_{S^*})^{-1} g
\ge \frac{\sigma^2 c_\xi \|g\|_2^2}{ 1+\delta },
\end{equation}
where recall we define $c_\xi := \text{Var}(\xi_k)$. 
We also have 
{
\begin{equation}
\begin{aligned}
 \sum_{k=1}^n \mathbf E\left( |\zeta_k|^3 \right)
=& \sigma^3 n^{3/2}\sum_{k=1}^n \left| g^\top (X_{S^*}^\top X_{S^*})^{-1} X_{S^*}^{(k)}\right|^3 
   \mathbf E\left( |\xi_k|^3 \right)\\
\overset{(i)}\le& C_1 \sigma^3 n^{3/2} \| g\|_2^3  \left\| (X_{S^*}^\top X_{S^*})^{-1}\right\|_2^3
   \cdot\sum_{k=1}^n\left\|X_{S^*}^{(k)}\right\|_2^3 \\
\overset{(ii)}\le& \frac{C_1 \sigma^3 \| g\|_2^3 }{n^{3/2} (1-\delta)^3} 
   \cdot n B_{S^*}^3\\ 
= & \frac{C_1 \sigma^3 \|g\|_2^3 B_{S^*}^3 }{n^{1/2} (1-\delta)^3} ,
\end{aligned}
\end{equation}
where inequality (i) follows from the subGaussian property, i.e.,
$$
\mathbf E |\xi_k|^3 = \int_{t\ge0} \mathbf P\left( |\xi_k|^3 \ge t \right)\mathrm d t
\le \int_{t\ge 0} 6t^2 e^{-t^2/2}\mathrm d t 
= 3 \sqrt{2 \pi} =: C_1,
$$
and inequality (ii) follows from the assumption $B_{S^*} = \max_{i \in [n]} \| X_{S^*}^{(i)} \|_2$ in Theorem \ref{T9}.

Therefore, we have 
\begin{equation}
\begin{aligned}
\frac{1}{\left ( \sum_{k=1}^n\text{Var} \left( \zeta_k \right) \right )^{3/2} }
\sum_{k=1}^n \mathbf E\left( |\zeta_k|^3 \right)
\le & \frac{C_1 \sigma^3 \| g\|_2^3 B_{S^*}^3 (1+\delta)^{3/2}}
{n^{1/2} (1-\delta)^3 \sigma^3 c_\xi^{3/2} \|g\|_2^3  }\\
=& \frac{C_1 (1+\delta)^{3/2} }
{c_\xi^{3/2} (1-\delta)^3 } \cdot \frac{B_{S^*}^3}{\sqrt n}
\to 0,
\end{aligned}
\end{equation}
as $B_{S^*}^3 = o(\sqrt n )$ and $n\to \infty$.}
Finally, by Lyapunov's central limit theorem, we conclude that 
$$
\frac{\sqrt n g^\top (\tilde\beta^*_{S^*} - \beta^*_{S^*})}
{\sqrt{\sigma^2 c_\xi g^\top \left( \frac1n X_{S^*}^\top X_{S^*} \right)^{-1} g }} \to N(0,1),
$$
as $n, d, m \to \infty$ and $B_{S^*}^3 = o_p(\sqrt n )$, which completes the proof of Theorem \ref{T9}. 

{
\subsection{Proof of Theorem \ref{upperbound} }
}
The proof of Theorem \ref{upperbound} is quite similar to Proposition \ref{T8}, with fewer scale techniques. 
We first introduce some abbreviations used in this subsection:
\begin{equation}\label{eq: th2.1 relation}
    C_\lambda =\sqrt{40} \cdot \frac{\kappa+ (\sqrt3 -1)\delta}{\kappa-\delta},
\quad A = \frac{8\delta^2 }{ (\kappa-\delta)^2},
\quad B := \left( 1- \sqrt{10}/C_\lambda \right)^2 \cdot \frac{8 \kappa^2}{ (\kappa-\delta)^2},
\end{equation}
where recall $\delta \in (0,1)$ is the parameter in DSRIP$\left((1+2A)s, ~\frac{1+4A}{1+2A}s_0, \delta \right)$ condition, and $
\kappa \in \left(\delta,~1\wedge (\delta+2\sqrt2 \delta) \right)$ is a tuning parameter.
We then introduce the event
$$
\mathcal E_{2.1} := \left\{ \max_{S \in \mathcal S(s,As_0) \bigcup  \mathcal S(As,s_0) }~\sum_{(i,j) \in S} \Xi_{ij}^2 < 10A \cdot \frac{\sigma^2 ss_0\Delta(s,s_0)}{n} \right\}.
$$
Folowing Lemma \ref{good event}, we learn that $\mathbf P(\mathcal E_{2.1}) \ge 1- 2 \exp \left( -\frac A3 ss_0 \Delta(s,s_0) \right)$.  

Under this event, we next prove that
\begin{equation}\label{eq: dsresult}
\begin{aligned}
    \|\hat \beta^t - \beta^* \|_2 \le&~ \sqrt{B ss_0 } \lambda_{t} \quad,\\
\hat S_{OG}^{t} := (S_{G^*})^c 
 \cap \hat S^{t} \in& ~\mathcal S (As,s_0) \quad,\\
\hat S_{IG}^{t} := S_{G^*} \cap \hat S^{t } \cap (S^*)^c \in& ~\mathcal S (s, A s_0) \quad,
\end{aligned}
\end{equation}
where $\hat S_{OG}^{t}, \hat S_{IG}^{t}$ represent similar meaning as $\tilde S_{OG}^{t}$ and $\tilde S_{IG}^{t}$ in Table \ref{sphericcase} (the superscript $\hat{}$ denotes the analysis of the first-step algorithm, and the superscript $\tilde{}$ denotes the analysis of the second-step algorithm).

When $t=0$, we have $\hat \beta^0 = \mathbf 0_p$ and $\hat S^0 = \emptyset$, therefore all three conclusions in \eqref{eq: dsresult} hold by choosing proper $\lambda_{(0)}$ satisfying {$\|\beta^*\|_2 \le \frac{3(1+ \sqrt{2})}{2}\sqrt{ss_0} \lambda_{(0)}$ (which will be discussed in the last of this subsection).}
Then, by using mathematical induction, we first assume all three results hold in the $t$-th iteration $(t\ge0)$, and aim to prove that all of them still hold in the $(t+1)$-th iteration. 

\paragraph{Step 1 (Control the shape of $ \hat S_{OG}^{t+1}= (S_{G^*})^c 
 \cap \hat S^{t+1}$).}  
By contradiction, we initially assume that more than $As$ groups are falsely discovered in the $(t+1)$-th iteration.
Then, we can choose arbitrary $As$ false discovered groups and construct an exact $(As ,s_0)$-shape $S_{OG}' \in \mathcal S (As,s_0)$ (the process of constructing $S_{OG}'$ is the same as the step 1 in the proof of Proposition \ref{T8}). 
Based on the element-wise thresholding and the group-wise thresholding operators, for any group $G_j$ selected in $S_{OG}'$, it holds that $\| \hat \beta_{G_j \cap S_{OG}'}^{t+1} \|_2^2 \ge s_0 \lambda_{(t)}^2 $, which yields that
\begin{equation}\label{OGeasy}
    \begin{aligned}
    \sqrt{A ss_0 }\lambda_{(t+1)}  
    \le& \sqrt{\sum_{(i,j) \in S_{OG}'} \langle\Phi_{(i,j)}^\top, \beta^* - \hat \beta^{t} \rangle^2 } 
        + \sqrt{\sum_{(i,j) \in S_{OG}'} \Xi_{ij}^2 }\\
    \overset{(i)}{<} & \delta \|\hat \beta^t - \beta^* \|_2 + \sqrt{\frac{10A \sigma^2 ss_0\Delta(s,s_0)}{n} }\\
    \overset{(ii)}{\le} & \delta \sqrt{Bss_0} \lambda_{(t)} + \frac{\sqrt{10A}}{C_\lambda} \sqrt{ss_0} \lambda_{(\infty)} \\
    \overset{(iii)}{\le} & \left( \frac{\delta \sqrt B}{\kappa} + \frac{\sqrt{10A}}{C_\lambda}\right) \sqrt{ ss_0 }\lambda_{(t+1)}\\
    = & \sqrt{ A ss_0 }\lambda_{(t+1)},
    \end{aligned}
\end{equation}
where inequality (i) follows from the DSRIP$\left((1+2A)s, ~\frac{1+4A}{1+2A}s_0, \delta \right)$ condition and event $\mathcal E_{2.1}$, inequality (ii) follows from the induction hypothesis \eqref{eq: dsresult} in the t-th iteration, and also follows from the relationship \eqref{eq: th2.1 relation}.
Inequality (iii) follows from $\lambda_{(t+1)} = (\kappa\lambda_{(t)} ) \vee \lambda_{\infty}$, and the final equality follows from the relationship \eqref{eq: th2.1 relation} again.
Thus, we find an absurdity, demonstrating that only fewer than $As$ groups can be falsely discovered in the $(t+1)$-th iteration.

Again, by contradiction, we can prove that fewer than $Ass_0$ entries are falsely discovered in the falsely discovered groups.
If not so, we can construct a set $S_{OG}'' \subseteq \hat S_{OG}^{t+1}$ satisfying $|S_{OG}''|=Ass_0$ and $S_{OG}'' \in \mathcal S(As,s_0)$, which leads an absurdity as the same as \eqref{OGeasy} again.

\paragraph{Step 2 (Control the shape of $ \hat S_{IG}^{t+1}= S_{G^*} 
 \cap (S^{*})^c \cap \hat S^{t+1}$ ).} 
By contradiction, we initially assume that more than $Ass_0$ entries are falsely discovered in $S_{G^*} \cap (S^*)^c$. 
Then we can construct a index set $S_{IG}'$ satisfying $S_{IG}' \subseteq \hat S_{IG}^{t+1}, S_{IG}' \in \mathcal S(s, As_0) $ and $|S_{IG}''|=Ass_0$. 
Therefore, similar to \eqref{OGeasy}, we can find the absurdity again.

\paragraph{Step 3 ($\ell_2$ error inequality of $\hat \beta^{t+1}$).} 
Now we prove that the $\ell_2$ error bound in the $(t+1)$-th iteration.
Note that
$$
    \hat \beta_{ij}^{t+1} - \beta_{ij}^* = - \hat H_{ij}^{t+1} \cdot \mathbf1 \left( (i,j) \notin \hat S^{t+1} \right) + \langle\Phi_{(i,j)}^\top, \beta^* - \hat \beta^{t} \rangle + \Xi_{i,j},
$$
where
$$
 \hat H_{ij}^{t+1} = \beta_{ij}^* + \langle\Phi_{(i,j)}^\top, \beta^* - \hat \beta^{t} \rangle + \Xi_{ij}.
$$

We then have
\begin{equation} \label{eq: 2.1 l21}
    \begin{aligned}
& \left\|\hat \beta^{t+1} - \beta^* \right\|_2 \\
\le& \left[ \sum_{(i,j) \in S^*} \left\{\langle\Phi_{(i,j)}^\top, \beta^* - \hat \beta^{t} \rangle
   +\Xi_{ij} - \hat H_{ij}^{t+1} \cdot \mathbf1 \left( (i,j) \notin \hat S^{t+1} \right) \right\}^2 \right. \\
&\quad + \left.\sum_{(i,j) \in \hat S^{t+1}\setminus S^*} 
   \left\{\langle\Phi_{(i,j)}^\top, \beta^* - \hat \beta^{t} \rangle+\Xi_{ij} \right\}^2 \right]^{1/2}\\
\le& \sqrt{ \sum_{(i,j) \in \hat S^{t+1}\cup S^*} \left\{\langle\Phi_{(i,j)}^\top, \beta^* - \hat \beta^{t} \rangle +\Xi_{ij} \right\}^2}\\
& + \left\{ \sum_{(i,j) \in S^*} \left( \hat H_{ij}^{t+1} \right)^2 \mathbf1 \left( \left|\hat H_{ij}^{t+1}\right| <\lambda_{(t+1)}\right)  \right.\\
&\quad\quad + \sum_{(i,j) \in S^*} \left( \hat H_{ij}^{t+1} \right)^2 
   \mathbf1\left( \left|\hat H_{ij}^{t+1}\right|\ge\lambda_{(t+1)}\right)\\
&\quad\quad\quad\quad\quad \left. \times \mathbf1 \left(\sum_{k: (k,j)\in S^* }\left(\hat H_{kj}^{t+1}\right)^2 
   \mathbf1\left( \left|\hat H_{kj}^{t+1}\right|\ge\lambda_{(t+1)} \right) < s_0\lambda_{(t+1)}^2 \right) \right\}^{1/2}\\
\le & \delta \left\|\hat \beta^{t}-\beta^* \right\|_2 
   + \sqrt{\sum_{(i,j)\in S^*} \Xi_{ij}^2+ \sum_{(i,j)\in \hat S_{IG}^{t+1}} \Xi_{ij}^2 + \sum_{(i,j)\in \hat S_{OG}^{t+1}} \Xi_{ij}^2 }
   + \sqrt{2ss_0} \lambda_{(t+1)},
\end{aligned}
\end{equation}
where the second inequality follows from H\"{o}lder inequality and the definition of the double sparse thresholding operator $\mathcal T_{\lambda_{(t+1)},s_0}$ in the ($t+1$)-th iteration.
The last inequality follows from the DSRIP$\left((1+2A)s, ~\frac{1+4A}{1+2A}s_0, \delta \right)$ condition, since by the induction hypothesis \eqref{eq: dsresult} and the first two step proofs, we have $S^* \cup \hat S^{t} \cup \hat S^{t+1} \in \mathcal S \left((1+2A)s, ~\frac{1+4A}{1+2A}s_0\right)$.
Consequently, by the event $\mathcal E_{2.1}$ and \eqref{eq: dsresult}, we conclude that
\begin{equation} \label{eq: 2.1 l22}
 \begin{aligned}
\left\|\hat \beta_{S^*}^{t+1} - \beta_{S^*}^* \right\|_2 
\le & \delta \left\|\hat \beta^{t}-\beta^* \right\|_2 
   + \sqrt{\frac{30A\sigma^2 s s_0 \Delta(s,s_0)}{n}}
   + \sqrt{2ss_0} \lambda_{(t+1)}\\
\le & \left(\frac{\delta \sqrt B}{\kappa} + \frac{\sqrt{30A}}{C_{\lambda}} + \sqrt2 \right) \sqrt{ss_0} \lambda_{(t+1)}\\
=& \sqrt{Bss_0 } \lambda_{(t+1)},
\end{aligned}
\end{equation}
where the last equality follows from the relationship \eqref{eq: th2.1 relation}.
 
Therefore, we prove that all three results in \eqref{eq: dsresult} still hold in the $(t+1)$-th iteration.

\paragraph{Step 4 (Feasible initial thresholding parameter).}
We finally end the proof of Theorem \ref{upperbound} by providing a suitable initial thresholding parameter 
$$
\lambda_{(0)} := \frac{ \|X^\top Y /n\|_\infty + \sqrt{10\sigma^2 (\log p)/n}}{ \sqrt2 \kappa}.
$$
By the relationship \eqref{eq: th2.1 relation}, we learn that $C_\lambda > \sqrt{40}$ and $\sqrt{B} \ge \frac{\sqrt 2 \kappa}{\kappa- \delta} \ge \frac{\sqrt 2 \kappa}{1- \delta}$. 
Consequently,
$$
\begin{aligned}
\sqrt{Bss_0} \lambda_{(0)} \ge & \sqrt{ss_0} \cdot \frac{\sqrt 2 \kappa}{1- \delta} \cdot \left( \frac{ \|X^\top Y /n\|_\infty + \sqrt{10\sigma^2 (\log p)/n}}{ \sqrt2 \kappa} \right)\\
\overset{(i)}\ge & \frac{1}{1-\delta} \cdot \left(\sqrt{ss_0} \left\| \frac1n X_{S^*}^\top (X_{S^*}  \beta_{S^*}^* + \sigma \xi) \right\|_\infty + \| \Xi_{S^*} \|_2 \right)\\
\ge & \frac{1}{1-\delta} \cdot \left( \left\| \frac1n X_{S^*}^\top X_{S^*}  \beta_{S^*}^* \right\|_2  -  \left\| \frac\sigma n X_{S^*}^\top \xi \right\|_2 + \| \Xi_{S^*} \|_2 \right)\\
\ge & \frac{1}{1-\delta} \left\| \frac1n X_{S^*}^\top X_{S^*} \right\|_2 \cdot \left\|\beta_{S^*}^* \right\|_2  \\
\ge & \left\|\beta^* \right\|_2 ,
\end{aligned}
$$
where inequality (i) follows from Lemma \ref{good event}, holding with probability greater than $1- \exp(ss_0 \Delta(s,s_0)/3)$, and the last inequality follows from DSRIP condition.

Therefore, combining Step 1-4 we complete the proof of Theorem \ref{upperbound}. 

{
\subsection{Miscellaneous}\label{sec: random}}
This subsection provides an in-depth discussion of two assumptions of the design matrix: the DSRIP condition and the rate $B_{S^*} = \max_{i\in[n]}\|X_{i,S^*}\|_2 $.

{
\subsubsection{Connection between DSRIP and double sparse Riesz condition}}
We first introduce the double sparse Riesz condition.
\begin{definition}\label{def: DSRC}
    We say that $X \in \mathbb R^{n\times p}$ satisfies the Double Sparse Riesz Condition $DSRC(as,bs_0, C_U, C_L)$ if and only if 
$$
C_L\|u\|_2^2  \le \frac1n\bigl\|X_{S} ~u\bigr\|_2^2 \le C_U\|u\|_2^2,
 \text{ for every } S\in\mathcal S(as,\,bs_0) \text{ and } u \in  \mathbb{R}^{|S|}\setminus \{ \mathbf 0_{|S|}\},
$$
where $C_U\ge C_L>0$ are two arbitrary constants and $a,b >0$ depend on $C_U$ and $C_L$. 
\end{definition}
The sparse Riesz condition is a well-known structural assumption in sparse regression \citep{BickelTsy09lassodantzig, yuan18grad}, and by Definition \ref{def: DSRC}, we extend this condition to the double sparse space.
Now we assume $X$ satisfies DSRC $\left( (1+2A)s,\frac{1+4A}{1+2A}s_0, C_U, C_L \right)$, and rewrite the gradient descent \eqref{eq: grad learning} with a learning rate $\gamma$ as 
\begin{equation}\label{eq: new grad learning}
\begin{aligned}
\tilde H^{t+1} &=  \tilde \beta^{t } + \frac\gamma nX^\top \left(Y-X\tilde \beta^{t } \right) \\
&=  \tilde \beta^{t } + \frac\gamma nX^\top \left(X \tilde \beta^* + \sigma \tilde \xi -X\tilde \beta^{t } \right) \\
&= \tilde \beta^* +  \left( \frac\gamma nX^\top X -I_p\right)(\tilde\beta^*- \tilde \beta^t) + \gamma\tilde \Xi.
\end{aligned}
\end{equation}
By solving the system of inequalities in $\gamma$ and $\delta$
$$
\left\{\begin{matrix}
|\gamma C_U -1|\le  \delta,\\
|\gamma C_L -1|\le  \delta,
\end{matrix}\right.
$$
we obtain the feasible region of learning rate $\gamma$ is $(0, 2/C_U)$, leading 
$$
\left\| \frac\gamma n X_S^\top X_S -I_p\right\|_2\le \underbrace{ \max\Big\{ |\gamma C_U-1|,~|1- \gamma C_L| \Big\}}_{=: ~\delta'} <1
$$
for every set $S \in \mathcal S \left( (1+2A)s,\frac{1+4A}{1+2A}s_0 \right)$.
By incorporating the revised decomposition \eqref{eq: new grad learning}, we can rescale inequality \eqref{tech} exactly as under the original DSRIP condition (with parameter $\delta'$), and thus the proof and conclusions of Theorem \ref{upperbound} and Proposition \ref{T8} remain valid, requiring only minor scaling adjustments involving $\delta'$, $\gamma$, $C_U$, and $C_L$.

The above analysis establishes the equivalence of DSRIP and DSRC while relaxing our original DSRIP assumption.  
Fundamentally, both conditions bound the deviation of the sample covariance matrix from the identity matrix $I_p$, thereby enabling a (Gaussian) location model perspective \cite{CM18, MN19, LZ22} on the double sparse regression.


{
\subsubsection{Realizability of double sparse Riesz condition}\label{subsec: realize DSRC}
}
We next consider the realizability of the double sparse Riesz condition in the sub-Gaussian setting:
For each $i \in [n]$, we assume the $i$-th observation $X^{(i)} \overset{d}{=} \Sigma^{1/2} Z^{(i)} $, where $\Sigma \in \mathbb{R}^{p \times p}$ is the population covariance matrix and $Z^{(1)},\cdots, Z^{(n)} \in \mathbb R^p$ are i.i.d. centered 1-sub-Gaussian random vectors such that $\mathbf E ( Z^{(i)} Z^{(i)\top} ) = I_p$.
Assume the $\Sigma$ has bounded eigenvalues as $C_L' \le \Lambda_j (\Sigma )  \leq C_U'$ for every $j \in [p]$, where $C_U'\ge C_L'>0$ are two absolute constants.

\begin{proposition}\label{prop: dsrc}
  Under the above sub-Gaussian setting, assume
\begin{equation}\label{eq: sample size}
    n \ge \left( \frac{C+ \sqrt{2/c}}{1\wedge (C_L'/2)} \right)^2 \times \Big\{ (2A+1)s\log (em/s) + (4A+1)ss_0 \log(ed/s_0) \Big\}.
\end{equation}
Then, with a probability greater than $1-  2e^{ -(2A+1)s\log (em/s) - (4A+1)ss_0 \log(ed/s_0) }$, the design matrix $X \in \mathbb R^{ n \times p}$ satisfies DSRC $\left( (1+2A)s,\frac{1+4A}{1+2A}s_0, C_U, C_L \right)$, where
$$
A = \frac{8\delta^2}{(\kappa-\delta)^2}, 
\quad \delta = \frac{C_U'}{C_U'+ C_L'}, 
\quad \kappa \in (\delta,1),
\quad C_U = C_U' + \frac{C_L'}2,
\quad C_L  = \frac{C_L'}2,
$$
and $C,c>0$ are two absolute constants.

\end{proposition}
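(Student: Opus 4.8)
The plan is to reduce $DSRC$ to a single uniform operator-norm concentration event for the sample Gram matrix restricted to double-sparse supports, and then to control that event by a Bernstein inequality combined with an $\varepsilon$-net and a union bound over all admissible supports. Write $a_0 := (1+2A)s$, $b_0 := \frac{1+4A}{1+2A}s_0$, so the target condition is $DSRC(a_0,b_0,C_U,C_L)$, and observe that every $S\in\mathcal S(a_0,b_0)$ satisfies $|S|\le a_0 b_0 = (1+4A)ss_0 =: k$. First I would show that it suffices to establish the event
$$
\mathcal E := \left\{ \sup_{S\in\mathcal S(a_0,b_0)} \left\| \tfrac1n X_S^\top X_S - \Sigma_{S,S}\right\|_2 \le \tfrac{C_L'}{2}\right\}.
$$
For any $S\in\mathcal S(a_0,b_0)$ and $v\in\mathbb R^{|S|}$, extending $v$ by zeros outside $S$ to $\bar v\in\mathbb R^p$ gives $v^\top\Sigma_{S,S}v = \bar v^\top\Sigma\bar v\in[C_L'\|v\|_2^2, C_U'\|v\|_2^2]$, hence $\Lambda_{\min}(\Sigma_{S,S})\ge C_L'$ and $\Lambda_{\max}(\Sigma_{S,S})\le C_U'$. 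On $\mathcal E$ this yields $\frac1n\|X_S v\|_2^2 \ge (C_L'-\frac{C_L'}2)\|v\|_2^2 = C_L\|v\|_2^2$ and $\frac1n\|X_S v\|_2^2 \le (C_U'+\frac{C_L'}2)\|v\|_2^2 = C_U\|v\|_2^2$, i.e.\ exactly $DSRC$ with the stated $C_L=\frac{C_L'}2$ and $C_U=C_U'+\frac{C_L'}2$.

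Next I would control the deviation for a single fixed support $S$. Since $X_S^{(i)} = (\Sigma^{1/2})_{S,:}Z^{(i)}$ is a centered sub-Gaussian vector with covariance $\Sigma_{S,S}$, for any unit $u\in\mathbb R^{|S|}$ the scalar $\langle X_S^{(i)},u\rangle$ has $\psi_2$-norm at most $\sqrt{u^\top\Sigma_{S,S}u}\le\sqrt{C_U'}$, so $\langle X_S^{(i)},u\rangle^2 - u^\top\Sigma_{S,S}u$ is centered sub-exponential. Bernstein's inequality then bounds $\bigl|\frac1n\sum_i\langle X_S^{(i)},u\rangle^2 - u^\top\Sigma_{S,S}u\bigr|$ for each fixed $u$, with the usual two-regime tail $2\exp\!\bigl(-c\,n\min\{\tau^2/C_U'^2,\ \tau/C_U'\}\bigr)$. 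Passing from a fixed direction to the operator norm over the $|S|$-dimensional sphere uses a standard $1/4$-net $\mathcal N_S$ with $|\mathcal N_S|\le 9^{|S|}$ and the comparison $\|\frac1n X_S^\top X_S - \Sigma_{S,S}\|_2 \le 2\max_{u\in\mathcal N_S}|\,\cdot\,|$.

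The final step is the union bound over supports. Choosing at most $a_0$ active groups ($\binom{m}{a_0}\le(em/a_0)^{a_0}\le(em/s)^{a_0}$) and at most $b_0$ elements from $d$ within each gives
$$
\log|\mathcal S(a_0,b_0)| \le a_0\log(em/s) + a_0 b_0\log(ed/s_0) = (2A+1)s\log(em/s) + (4A+1)ss_0\log(ed/s_0) =: E,
$$
so that the total log-complexity, including the nets, is $E + k\log 9 \lesssim E$ (here one checks $k=(1+4A)ss_0\le E$, using $\log(ed/s_0)\ge1$, so the ambient-dimension term never dominates). Requiring the Bernstein deviation, union-bounded over this complexity, to be at most $\tfrac{C_L'}2$ places us in the square-root regime of the tail; the resulting deviation is at most $(C+\sqrt{2/c})\sqrt{E/n}$, and imposing $(C+\sqrt{2/c})\sqrt{E/n}\le 1\wedge\frac{C_L'}2$ both keeps us in that regime and forces the deviation below $\frac{C_L'}2$. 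Solving for $n$ reproduces the stated sample size \eqref{eq: sample size}, while the exponent of the union bound comes out to $-E$, giving failure probability $\le 2e^{-E}$.

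I expect the main obstacle to be the constant bookkeeping rather than any conceptual difficulty: one must fuse the linear and quadratic regimes of Bernstein's inequality into the single threshold $1\wedge(C_L'/2)$, absorb the net cardinality $9^{|S|}$ and the dimension $k$ into the single entropy $E$, and verify that the union-bound exponent lands exactly at $E$ so that the probability is precisely $1-2e^{-E}$. The sub-Gaussian scale $\sqrt{C_U'}$ enters the per-direction tail, so the $C_U'$-dependence is carried by the absolute constant $C$ (treating $C_U'$ as a fixed spectral bound), and the explicit $C_L'$ in the denominator of \eqref{eq: sample size} records the gap $\frac{C_L'}2$ used in the reduction to $\mathcal E$.
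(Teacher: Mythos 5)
Your proposal is correct and follows essentially the same route as the paper: both reduce DSRC to the uniform event $\sup_{S}\left\|\tfrac1n X_S^\top X_S-\Sigma_{SS}\right\|_2\le C_L'/2$ over double-sparse supports, control each fixed support by sub-Gaussian covariance concentration, and union-bound over supports with the same entropy $E=(2A+1)s\log(em/s)+(4A+1)ss_0\log(ed/s_0)$, solving for $n$ to land in the square-root regime. The only difference is that the paper invokes Remark 5.40 of Vershynin as a black box for the per-support step (the $\max(\iota,\iota^2)$ tail bound), whereas you re-derive that ingredient from Bernstein's inequality plus a $1/4$-net, which is precisely how the cited result is proven.
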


\begin{proof}[proof of Proposition \ref{prop: dsrc}]
For any set $S$ satisfying $S \in \mathcal S \left( (1+2A)s,\frac{1+4A}{1+2A}s_0 \right)$ and $|S| = (1+4A)ss_0$, by Remark 5.40 in \cite{vershynin2010introduction}, we have
$$
\mathbf P \left( \left\|\frac1n X_S^\top X_S- \Sigma_{SS} \right\|_2 > \max(\iota , \iota^2)  \right) \le 2e^{-ct^2},
$$
where $\iota = C\sqrt{\frac{(1+4A)ss_0}n } +\frac{t}{\sqrt n}$ and $C,c$ are two fixed positive constants.

Then, by taking $t =\sqrt{\frac2c}  \cdot \sqrt{ (2A+1)s\log (em/s) + (4A+1)ss_0 \log(ed/s_0)}$ and following the sample size assumption \eqref{eq: sample size}, we get $\iota \vee \iota^2 = \iota \le C_L'/2 $, which yields that
\begin{equation}\label{eq: maxop}
\begin{aligned}
&\mathbf P \left(  \max_{S \in \mathcal S \left( (1+2A)s,~\frac{1+4A}{1+2A}s_0 \right)} \left\|\frac1n X_S^\top X_S- \Sigma_{SS} \right\|_2 > \frac{C_L'}{2}\right)\\
\le& \mathbf P \left( \bigcup_{\substack{
    S\in\mathcal S \left( (1+2A)s,~\frac{1+4A}{1+2A}s_0 \right), \\
    \lvert S\rvert=(1+4A) ss_0 }} \left\{ \left\|\frac1n X_S^\top X_S- \Sigma_{SS} \right\|_2 > \iota \vee \iota^2 \right\}\right)\\
\le& 2\binom m{ (1+2A)s} \binom{(1+2A)sd}{ (1+4A)ss_0} \exp\left(-ct^2 \right)\\
\le& 2 \exp\Big\{ -(2A+1)s\log (em/s) - (4A+1)ss_0 \log(ed/s_0) \Big\},
\end{aligned}
\end{equation}
where the last inequality follows from $\binom yx \le (ey/x)^x$ for every $0<x<y$.
Therefore, with a probability greater than $1-  2e^{ -(2A+1)s\log (em/s) - (4A+1)ss_0 \log(ed/s_0) }$, for every $S \in \mathcal S \left( (1+2A)s,~\frac{1+4A}{1+2A}s_0 \right)$, we have
$$
\left\|\frac1n X_S^\top X_S \right\|_2 
~\le~ \|  \Sigma_{SS} \|_2 + {C_L'}/{2}
~\le~ C_U' +  {C_L'}/{2} ~=:~ C_U, 
$$
and 
$$
\left\|\frac1n X_S^\top X_S \right\|_2 
~\ge~ \|  \Sigma_{SS} \|_2 - {C_L'}/{2}
~\ge~  {C_L'}/{2}~ =:~ C_L. 
$$
Hence, by choosing learn rate $\gamma = \frac{2}{C_U + C_L} $ and $\delta = \frac{C_U - C_L}{C_U + C_L}$, we complete the proof of Proposition \ref{prop: dsrc}.
\end{proof}

\paragraph{Some examples satisfying DSRC}
Here we provide some illustrative examples satisfying DSRC.
Consider an exponential-decay Toeplitz covariance matrix $\Sigma$ with entries $\Sigma_{ij}=\rho^{|i-j|}$ for a constant $\rho \in [0,1)$.
This covariance structure is ubiquitous in high-dimensional studies \cite{JMLR:v11:raskutti10a, fan2014fcp, zhao2022high}.
It can be proved that
$$
\Lambda_k(\Sigma)
=\frac{1-\rho^2}
{1 -2\rho\cos\bigl(\tfrac{\pi k}{p+1}\bigr)
+\rho^2}, \text{ for every } k \in [p],
$$
leading 
$$
 C_L' = \frac{1-\rho}{1+ \rho} \le \Lambda_{k}(\Sigma) \le \frac{1+ \rho}{1-\rho} = C_U'.
$$
Therefore, this $\Sigma$ satisfies our DSRC assumption.

Additionally, consider the special case of an i.i.d. Gaussian design $X_{ij}\sim N(0,1)$ (so $\rho=0$ and $C_U'=C_L'=1$), which is standard in compressed sensing \cite{Ndaoud2020NCS} and high-dimensional sparse regression \cite{ARW2025Zhu}.
Then the design matrix $X$ even satisfies the DSRIP condition with high probability as soon as $n \gtrsim s \log(em/s) + ss_0 \log(ed/s_0)$.

{
\subsubsection{A high probability bound of $B_{S^*}$}\label{subsec: B}
}
Here we provide the rate of $B_{S^*}$ (appears in Theorem \ref{T9}) in the context of sub-Gaussian design.
For each $i \in [n]$, we consider the vector $X^{(i)} \overset{d}{=} \Sigma^{1/2} Z^{(i)} \in \mathbb{R}^{p}$ as in Appendix \ref{subsec: realize DSRC}
, and assume the submatrix $\Sigma_{S^*, S^*} \in \mathbb{R}^{|S^*| \times |S^*|}$ has bounded spectral norm as $ \Lambda_{\max}(\Sigma_{S^*, S^*} )  \leq C_{S^*}$, where $C_{S^*}$ is an absolute constant.
Then, 
by the Hanson-Wright inequality \citep{vershynin13HW}, for a fixed $i \in [n]$, we have
$$
\mathbf{P}\left\{ \left\|X_{S^*}^{(i)} \right\|_2^2 \ge tr(\Sigma_{S^*, S^*}) +K^2 \left( \sqrt{\frac tc} \|\Sigma_{S^*,S^*}\|_F + \frac tc \|\Sigma_{S^*,S^*}\|_{2} \right) \right\} \leq e^{-t}
$$
with two constants $K>0, c \in (0,1)$.
We take $t = 2 \log n$, by the inequalities
$$
\begin{aligned}
    &tr(\Sigma_{S^*, S^*}) +K^2 \left( \sqrt{\frac tc} \|\Sigma_{S^*,S^*}\|_F + \frac tc \|\Sigma_{S^*,S^*}\|_{2} \right)\\
    \le & C_{S^*} ss_0 + \frac{C_{S^*} K^2}{c} \left( \sqrt{ss_0 t} + t\right)\\
    \le & \frac32 C_{S^*} \left( 1+ \frac{K^2}{c} \right) \left( ss_0 + t\right),
\end{aligned}
$$
we have
\begin{equation}\label{eq: hw}
\begin{aligned}
  & \mathbf{P}\left[\bigcup_{i \in [n]} \left\{ \left\|X_{S^*}^{(i)} \right\|_2^2 \ge \frac32 C_{S^*} \left( 1+ \frac{K^2}{c} \right) \left( ss_0 + 2 \log n\right) \right\} \right] \\
  \le& \sum_{i \in [n]}\mathbf{P} \left\{ \left\|X_{S^*}^{(i)} \right\|_2^2 \ge \frac32 C_{S^*} \left( 1+ \frac{K^2}{c} \right) \left( ss_0 + 2 \log n\right) \right\} \\
  \le & n e^{-2 \log n} = n^{-1}.
\end{aligned}
\end{equation}
With a probability greater than $1-1/n$, \eqref{eq: hw} leads $B_{S^*} \lesssim \sqrt{ss_0 + \log n}$.
Since $\log n \prec n^{1/3}$ as $n \to \infty$, it follows that
$$
\left\{ B_{S^*}^3 = o_p(\sqrt n)\right\} \Leftarrow \left\{ (ss_0)^3 + \log^3 n = o_p(n)\right\} \Leftrightarrow \left\{ ss_0 = o_p(n^{1/3})\right\},
$$
which provides a clear understanding of the technical assumption in Theorem \ref{T9}.

\section{Proof of the minimax lower bounds}\label{lowerproof}
First, we consider the minimax lower bound for signal estimation and transform the minimax risk into Bayesian risk for in-depth analysis. This analytical framework is inspired by \cite{MN19}.

\begin{lemma}\label{lemma:minimaxori}
For any $1\le s<m,~1\le s_0<d$, any subset ${\Theta} \subseteq \mathbb R^p$ and for any prior probability distribution $\pi$ on $\mathbb R^p$, as we assume the design matrix $X \in \mathbb R^{n \times p}$ is fixed, we have
\begin{equation}\label{lemma1.ineq}
\begin{aligned} 
&\inf_{\hat \beta} \sup_{\beta^* \in {\Theta}} \underset{Y \sim P_{\beta^*}}{\mathbf{E}}\|\hat \beta - \beta^* \|_2^2 \\
\ge & \inf_{\hat T  } \underset{\beta^* \sim \pi}{\mathbf{E}}~ 
      \underset{Y \sim P_{\beta^*}}{\mathbf{E}} \|\hat T(Y,X) - \beta^* \|_2^2\\
& - 2\underset{\beta^* \sim \pi}{\mathbf {E}}~ \underset{Y \sim P_{\beta^*}}{\mathbf{E}}
      \Bigg\{\bigg(\underset{\beta^{ {\Theta}} | Y}{\mathbf{E} }\Big(\left\|\beta^{ {\Theta}}\right\|_2^2 \big| Y\Big)+\|\beta^*\|_2^2\bigg) \mathbf{1}\left(\beta^* \notin {\Theta} \right)\Bigg\},
\end{aligned}
\end{equation}
where $\inf_{\hat \beta}, \inf_{\hat T  }$ are taken over all estimator of $\beta^*$, and $\beta^{ {\Theta}}:= \beta \mathbf1(\beta \in {\Theta})$.
We denote by $\underset{\beta^{ {\Theta}} | Y}{\mathbf{E} }(\cdot|Y)$ an expectation based on the conditional distribution $\frac{ P(Y|\beta) \pi_{\Theta}(\beta) }{ \int  P(Y|\beta ) \pi_{\Theta}(\beta ) \mathrm d \beta}$, where $\pi_{\Theta}$ denotes the probability measure $\pi$ conditioned by the event $\{ \beta \in {\Theta} \}$.
\end{lemma}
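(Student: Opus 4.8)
The plan is to run the classical reduction from the minimax risk over $\Theta$ to a Bayes risk, but against the prior $\pi$ that is supported on all of $\mathbb R^p$ rather than only on $\Theta$; the two subtracted terms on the right-hand side are precisely the price paid for the mass that $\pi$ places outside $\Theta$. Write $M$ for the left-hand side (the minimax risk) and $R(\pi):=\inf_{\hat T}\mathbf E_{\beta^*\sim\pi}\mathbf E_{Y\sim P_{\beta^*}}\|\hat T(Y,X)-\beta^*\|_2^2$ for the full Bayes risk. First I would use that, for any estimator, the supremum over $\Theta$ dominates the average against the conditioned prior $\pi_{\Theta}$ (which is supported on $\Theta$):
\[
M \;\ge\; \inf_{\hat\beta}\,\mathbf E_{\beta^*\sim\pi_{\Theta}}\mathbf E_{Y\sim P_{\beta^*}}\|\hat\beta-\beta^*\|_2^2 \;=\; \frac{1}{\pi(\Theta)}\,\inf_{\hat\beta}\,\mathbf E_{\beta^*\sim\pi}\big[\mathbf1(\beta^*\in\Theta)\,\mathbf E_{Y\sim P_{\beta^*}}\|\hat\beta-\beta^*\|_2^2\big].
\]
Denoting the infimum on the far right by $J$ and using $\pi(\Theta)\le 1$, this already gives $M\ge J$, so the whole task reduces to proving $J\ge R(\pi)-2(\text{corr})$, where $(\text{corr})$ is the correction expectation in the statement.

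Next I would identify the minimizer defining $J$. Minimizing the integrand pointwise in $Y$ shows that $J$ is attained at the $\Theta$-restricted posterior mean $\hat\beta_J(Y)=\mathbf E_{\beta^{\Theta}|Y}(\beta^{\Theta}\mid Y)$, which is exactly the conditional expectation named in the lemma (note that $\beta^{\Theta}=\beta$ holds $\pi_{\Theta}$-almost surely, so this restricted posterior mean and the restricted posterior second moment are genuinely the written quantities). Decomposing the full Bayes risk according to the events $\{\beta^*\in\Theta\}$ and $\{\beta^*\notin\Theta\}$, and feeding in $\hat\beta_J$ as a (sub-optimal) competitor for the full problem, yields $R(\pi)\le J+g(\hat\beta_J)$, where $g(\hat\beta_J):=\mathbf E_{\beta^*\sim\pi}\big[\mathbf1(\beta^*\notin\Theta)\,\mathbf E_{Y\sim P_{\beta^*}}\|\hat\beta_J(Y)-\beta^*\|_2^2\big]$ collects exactly the contribution from outside $\Theta$.

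Finally I would control $g(\hat\beta_J)$ by the elementary bound $\|a-b\|_2^2\le 2\|a\|_2^2+2\|b\|_2^2$, which on the event $\{\beta^*\notin\Theta\}$ splits $g(\hat\beta_J)$ into $2\mathbf E[\mathbf1(\beta^*\notin\Theta)\|\beta^*\|_2^2]$ and $2\mathbf E[\mathbf1(\beta^*\notin\Theta)\,\mathbf E_Y\|\hat\beta_J(Y)\|_2^2]$; applying Jensen's inequality to the latter, $\|\mathbf E_{\beta^{\Theta}|Y}(\beta^{\Theta}|Y)\|_2^2\le \mathbf E_{\beta^{\Theta}|Y}(\|\beta^{\Theta}\|_2^2|Y)$, reproduces precisely the bracket $\mathbf E_{\beta^{\Theta}|Y}(\|\beta^{\Theta}\|_2^2|Y)+\|\beta^*\|_2^2$ appearing in the statement, so $g(\hat\beta_J)\le 2(\text{corr})$. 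Chaining $M\ge J\ge R(\pi)-g(\hat\beta_J)\ge R(\pi)-2(\text{corr})$ closes the argument. I expect the only delicate point to be bookkeeping rather than any analytic difficulty: keeping the conditioned prior $\pi_{\Theta}$, the restricted posterior mean, and the Jensen step aligned so that the bound on $g(\hat\beta_J)$ matches the stated correction term \emph{exactly} (not merely up to a multiplicative constant), together with the implicit nondegeneracy assumption $\pi(\Theta)>0$ needed to define $\pi_{\Theta}$.
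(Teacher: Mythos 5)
Your proof is correct and follows essentially the same route as the paper's: lower-bound the minimax risk by the Bayes risk under the conditioned prior $\pi_{\Theta}$, identify the restricted posterior mean as the exact Bayes rule, feed it into the full Bayes problem as a competitor, split on $\{\beta^*\in\Theta\}$ versus $\{\beta^*\notin\Theta\}$, and bound the outside contribution via $\|a-b\|_2^2\le 2\|a\|_2^2+2\|b\|_2^2$ together with Jensen's inequality. The only differences are cosmetic: the paper carries out the Bayes-estimator step coordinatewise (citing Lehmann--Casella) while you argue directly with the vector posterior mean, and the slack $\pi(\Theta)\le 1$ is absorbed at a slightly different point in the chain of inequalities.
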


\subsection{Proof of Theorem \ref{allselectorD1D2}}\label{sec: pi12}
Recall that we mainly focus on two subsets of ${\Theta}_e(s,s_0,a)$, which can be described as 
$$
\begin{aligned}
\Theta_{e,1} &:= \left\{ \beta \in \Theta_e \left(s,s_0, a \right) ~\left|~
\begin{aligned}
    & G^*(\beta) = [s], \\
    & \beta_{ij} = a ~\text{ for every } (i,j) \in \operatorname{supp}(\beta)
\end{aligned} \right.\right\},\\
\Theta_{e,2} &:= \left\{ \beta \in \Theta_e\left(s,s_0, a \right) ~\left|~
\beta_{ij} =\begin{cases}
 a, & \text{ if } i \in [s_0] \text{ and } j \in G^*(\beta) \\
 0, & \text{ otherwise}
\end{cases} \right.\right\}.
\end{aligned}
$$

In ${\Theta}_{e,1}$, information is limited to the location of support groups, omitting details on their support entries. 
The subspace ${\Theta}_{e,2}$ provides insight into the support entries within each support group, but it lacks information on the location of support groups.
Given the settings above, we frame the signal estimation as a support identification problem, i.e., to estimate the decoder $\eta^* = \{\mathbf{1}(\beta^*_{ij} \ne 0) \}_{ij}\in \{0,1\}^{p}$. 
Plus, we define two functions:
\begin{equation}\label{aux2fun}
\begin{aligned}
    \psi(d,s_0,a,\sigma) &:=(d-s_0){\Phi}\left(-\frac{t(a,d,s_0,\sigma)}{\sigma}\right) + s_0 {\Phi}\left(-\frac{a\sqrt n-t(a,d,s_0,\sigma)}{\sigma}\right),\\
    t(a,d,s_0,\sigma) &:= \frac{a\sqrt n}{2} + \frac{\sigma^2}{a\sqrt n} \log \frac{d-s_0}{s_0}.
\end{aligned}
\end{equation}

 
Now, we define two prior distributions $\pi_1$ and $\pi_2$ corresponding to parameter subsets ${\Theta}_{e,1}$ and ${\Theta}_{e,2}$ respectively.
For $\pi_1$, suppose that for each $(i,j) \in [d] \times [s]$ (by the definition of ${\Theta}_{e,1}$, the support index must be in $[d] \times [s]$), the number of support entries is from a binomial distribution, i.e., $\sum_{(i,j) \in [d] \times [s]} \eta_{ij}^* \sim Bin (ds, s_0'/d)$, where $ s_0' $ satisfies $1\le s_0' < s_0$ is an integer determined later. 
And assume that $\eta_{G_j}^* \equiv \mathbf 0_d$ for all $ j >s$.

For $\pi_2$, suppose that the number of support groups is from a binomial distribution $Bin(m, s^{\prime} / m)$, where $ s' < s$ is also an integer determined later.
In each support group, only the first $s_0$ entries are support entries. Therefore $\eta_{ij}^* \equiv 0$ for all $i > s_0, j \in [m]$.

Note that $\pi_1$ and $\pi_2$ are two prior distributions of $\eta^*$, while $\beta^* = a\cdot\eta^*$ in ${\Theta}_{e,1}$ and ${\Theta}_{e,2}$.
Therefore, the term $a \hat \eta$ can also be an estimator of $\beta^*$, and we replace $\beta^*$ and $\hat \beta$ (in Lemma \ref{lemma:minimaxori}) by $\eta^*$ and $\hat \eta$ with prior $\pi_k$ (for $k=1,2$).
Then we have 
\begin{equation}\label{eq:priork}
\begin{aligned}
&\inf_{\hat \eta \in \{0,1\}^p} \sup_{\beta^* \in  {\Theta}_{e,k}} 
    \mathbf{E}_{Y\sim P_{\beta^*} } \|\hat \eta - \eta^*\|_2^2 \\
\ge &\inf_{\hat \eta \in [0,1]^p} \sup_{\beta^* \in  {\Theta}_{e,k}} 
    \mathbf{E}_{Y\sim P_{\beta^*} } \|\hat \eta - \eta^*\|_2^2 \\
\ge &\inf_{\hat T } \underset{\eta^* \sim \pi_k}{\mathbf E} \underset{Y \sim P_{\beta^*} }{\mathbf E} \|\hat T(Y,X) - \eta^*\|_2^2 \\
    &-2 \underset{\eta^* \sim \pi_k}{\mathbf E} \underset{Y \sim P_{\beta^*} }{\mathbf E}  \Bigg\{ \bigg( \underset{\eta^{ {\Theta}_{e,k}}|Y}{\mathbf{E}}   \Big( \|\eta^{ {\Theta}_{e,k}} \|_2^2 \big| Y \Big) +
    \| \eta^* \|_2^2  \bigg) \mathbf1( a\eta^* \notin {\Theta}_{e,k}) \Bigg\} \\
\ge &\inf_{\hat T } \underset{\eta^* \sim \pi_k}{\mathbf E} \underset{Y \sim P_{\beta^*} }{\mathbf E}  \|\hat T(Y,X) - \eta^* \|_2^2 \\
    &-2 ss_0 \underset{\eta^* \sim \pi_k}{\mathbf P} \left( a\eta^* \notin {\Theta}_{e,k} \right) 
    -2 \underset{\eta^* \sim \pi_k}{\mathbf E} \big( \| \eta^*\|_2^2 \mathbf1( a\eta^* \notin {\Theta}_{e,k}) \big),
\end{aligned}
\end{equation}
where $\inf_{\hat T}$ denotes the infimum over all estimators $\hat T \in [0,1]^p$, and the last inequality follows from $\|\eta^{ {\Theta}_{e,k}} \|_2^2 =  \|\eta \|_2^2\cdot \mathbf1( a\eta \in  {\Theta}_{e,k} )\le ss_0$.

\begin{proof}[Proof of \eqref{allselectorD1}]

Recall that by $\beta^* \in {\Theta}_{e,1}$, only the first $s$ groups are support groups.
And based on the prior $\pi_1$, the total number of support entries $v: = \sum_{j\in [m]} v_j$ is from a binomial distribution $Bin(ds, s_0' / d)$, whence we have 
\begin{equation}\label{eq:23term}
   \begin{aligned}
&2ss_0 \mathbf P_{\eta^* \sim \pi_1} \Big( a\eta^* \notin {\Theta}_{e,1} \Big) + 2\mathbf E_{\eta^* \sim \pi_1}  \Big( \|\eta^*\|_2^2 \mathbf1(a\eta^* \notin {\Theta}_{e,1}) \Big)\\
=& 2ss_0 \mathbf P \left( v > ss_0\right) + 2\mathbf E \Big\{ v \cdot \mathbf1(v  >ss_0)  \Big\} \\
\le& 2s s_0 \exp\left(-\frac{3s(s_0-s_0')^2}{2(s_0+2s_0')}\right) 
    +2ss_0' \exp\left(-\frac{3s(s_0-s_0')^2}{2(s_0+2s_0')}\right),
\end{aligned}
\end{equation}
where the first equality follows from the definition of $\Theta_1$, and the last inequality follows from Lemma \ref{berncen} and the bound
\begin{equation*}
\begin{aligned}
&\mathbf E \Big\{ v \cdot \mathbf1(v  >ss_0)  \Big\}\\
=&\sum_{i=ss_0+1}^{ds} i \binom{ds}{i} \left( \frac{s_0' }{d} \right)^i \left(1- \frac{s_0'}{d} \right)^{ds-i}\\
=& ss_0' \sum_{i-1=ss_0}^{ds-1} \frac{(ds-1)!}{(i-1)!~[(ds-1)-(i-1)]!} \left( \frac{s_0'}{d} \right)^{i-1} \left(1- \frac{s_0'}{d} \right)^{(ds-1)-(i-1)}\\
=& ss_0' P\left( Bin(ds-1 , \frac{s_0'}{d} ) \ge ss_0\right)\\
\le& s s_0' P\left( Bin(ds , \frac{s_0'}{d} ) \ge s s_0\right)\\
\le& ss_0' \exp\left(-\frac{3s(s_0-s_0')^2}{2(s_0+2s_0')}\right) . 
\end{aligned}
\end{equation*}

For the first term in \eqref{eq:priork}, define
\begin{equation*}
\tilde Y_{ij}:= Y - \sum_{(k,\ell) \ne (i,j)} X_{(k\ell)} ~ \beta_{k\ell}^* = X_{(ij)} \beta_{ij}^* + \sigma \xi.
\end{equation*}
Then we have
\begin{align*}
& \inf_{\hat T }\underset{\eta^* \sim \pi_1}{\mathbf E}
    \underset{Y \sim P_{\beta^*} }{\mathbf E} \|\hat T(Y,X) - \eta^* \|_2^2 \\
\overset{(i)}{\ge} & \sum_{j=1}^s \sum_{i=1}^d \underset{\eta_{\setminus (ij)}^* }{\mathbf E}
    \left\{ \inf_{\hat T_{ij} }
    \underset{\eta_{ij}^* }{\mathbf E} \underset{Y}{\mathbf E}
    \left( (\hat T_{ij}(Y,X) - \eta_{ij}^*)^2 \Big| \eta_{\setminus (ij)}^* \right) \right\} \\
\overset{(ii)}{\ge}& \sum_{j=1}^s \sum_{i=1}^d \underset{\eta_{\setminus (ij)}^* }{\mathbf E} \left\{ \inf_{\hat T_{ij} }
    \underset{\eta_{ij}^* }{\mathbf E} \underset{Y}{\mathbf E}
    \left( (\hat T_{ij}(\tilde Y_{ij},X) - \eta_{ij}^*)^2 \Big| \eta_{\setminus (ij)}^* \right) \right\} \\
=& \sum_{j=1}^s \sum_{i=1}^d  \inf_{\hat T_{ij}} \left\{
    \left(1-\frac{s_0'}{d}\right)
    \underset{\tilde Y_{ij}\sim N(\mathbf 0,\sigma^2 \mathbf I_n)}{\mathbf E}
     \left( \hat T_{ij}(\tilde Y_{ij},X) \right)^2\right. \\
    &  \qquad\qquad \qquad \left.+ \frac{s_0'}{d} \left(
    \underset{\tilde Y_{ij} \sim  N(aX_{ij},\sigma^2 \mathbf I_n)}{\mathbf E} \left( 1 -\hat T_{ij}(\tilde Y_{ij},X) \right)^2
    \right) \right\}.
\end{align*}
where inequality (i) only involves the first $s$ groups since we can always set $\hat T_{ij} = 0$ for all $ j>s, i \in [d]$ by the construction of $\pi_1$.
Inequality (ii) is based on that under independent prior distributions of the entries of $\eta^*$, the oracle selector of a given component $\eta_{ij}^*$ does not depend on the rest of the components. 
And the infimum in the last equality can be achieved by the selector 
\begin{equation*}
\hat T_{ij}^{*}(\tilde Y_{ij},X) 
:= \frac{1}{1+ \frac{(d-s_0')\boldsymbol \varphi_{\sigma} ( \tilde Y_{ij})  }
    {s_0' \boldsymbol \varphi_{\sigma} (\tilde Y_{ij}-aX_{ij} ) } }
=\frac{1}{1+ \frac{d-s_0'}{s_0'}
   \frac{\boldsymbol \varphi_{0,\sigma}(\tilde Y_{ij})}{\boldsymbol \varphi_{a,\sigma}(\tilde Y_{ij}) } },
\end{equation*}
where we abbreviate $\boldsymbol \varphi_{0,\sigma}({\tilde Y_{ij}}):= \boldsymbol \varphi_{\sigma} ( \tilde Y_{ij})$, $\boldsymbol \varphi_{a,\sigma} (\tilde Y_{ij} ):= \boldsymbol \varphi_{\sigma} (\tilde Y_{ij}-aX_{ij} )$, where $\boldsymbol \varphi_{ \sigma}(y) = \frac{1}{\sqrt{2\pi}\sigma} \exp \left( - \frac{y^2}{2\sigma^2} \right)$.
Then, define $\mathcal A := \left\{\mathbf y \in \mathbb R^n : ~\frac{d-s_0'}{s_0'} \frac{\boldsymbol \varphi_{0,\sigma}(\mathbf y)}{\boldsymbol \varphi_{a,\sigma}(\mathbf y) } >1 \right\}$, after some simple calculation we have 
\begin{equation}\label{D1T}
\begin{aligned}
&\inf_{\hat T\in[0,1]^p}\underset{\eta^* \sim \pi_1}{\mathbf E} 
   \underset{Y \sim P_{\beta^*} }{\mathbf E} \|\hat T(Y,X) - \eta^* \|_2^2 \\
\ge& sd\int_{{\bf y}\in \mathcal A} \frac{s_0'}{d} \frac{\frac{d-s_0'}{s_0'} \frac{\boldsymbol\varphi_{0,\sigma}({\bf y})}{\boldsymbol \varphi_{a,\sigma}({\bf y})}}
   {1+\frac{d-s_0'}{s_0'} \frac{ \boldsymbol \varphi_{0,\sigma}({\bf y})}{\boldsymbol \varphi_{a,\sigma}({\bf y})}}
   \boldsymbol \varphi_{a,\sigma}({\bf y}) \mathrm d {\bf y}
    + sd\int_{{\bf y}\in \mathcal A^c} \frac{s_0'}d \frac{\frac{d-s_0'}{s_0'} \frac{\boldsymbol\varphi_{0,\sigma}({\bf y})}{\boldsymbol \varphi_{a,\sigma}({\bf y})}}
   {1+\frac{d-s_0'}{s_0'} \frac{ \boldsymbol\varphi_{0,\sigma}({\bf y})}{\boldsymbol \varphi_{a,\sigma}({\bf y})}}
   \boldsymbol \varphi_{a,\sigma}({\bf y}) \mathrm d {\bf y}\\
\ge&sd \int_{{\bf y}\in \mathcal A} \frac{s_0'}{d} \cdot \frac12 \boldsymbol \varphi_{a,\sigma}({\bf y})\mathrm d {\bf y}
    +sd \int_{{\bf y}\in \mathcal A^c} \frac{d-s_0'}{d} \cdot\frac12 \boldsymbol \varphi_{0,\sigma}({\bf y})\mathrm d {\bf y}\\
=& \frac{sd}{2} \left\{ \frac{s_0'}{d} {\Phi}\left(-\frac{a \sqrt n}{2\sigma}+\frac{\sigma}{a\sqrt n} \log\frac{d-s_0'}{s_0'}\right) \right.\\
  &\qquad \left.  + \frac{d-s_0'}{d} {\Phi}\left(-\frac{a \sqrt n}{2\sigma}-\frac{\sigma}{a\sqrt n} \log\frac{d-s_0'}{s_0'}\right) \right\}\\
=& \frac{s}{2} \psi(d, s_0', a, \sigma),
\end{aligned}
\end{equation}
where the function $\psi$ follows from \eqref{aux2fun}.

Combining \eqref{eq:priork}, \eqref{eq:23term} and \eqref{D1T} together, we have
\begin{equation}\label{Theta1}
\begin{aligned}
& \inf_{\hat \eta \in \{ 0,1\}^p} \sup_{\beta^* \in {\Theta}_{e,1} } \mathbf{E}_{Y\sim P_{\beta^*} } \| \hat \eta(Y,X) - \eta \|_2^2 \\
\ge& \frac{s}{2} \psi(d, s_0', a, \sigma)  -  2s( s_0+s_0') \exp\left(-\frac{3s(s_0-s_0')^2}{2(s_0+2s_0')}\right)  \\
{\ge}& \frac{ss_0'}{2s_0}\psi(d, s_0, a, \sigma)  -  2s( s_0+s_0') \exp\left(-\frac{3s(s_0-s_0')^2}{2(s_0+2s_0')}\right) ,
\end{aligned}
\end{equation}
where the last inequality follows from Lemma \ref{mdPsi}.
\end{proof}

\begin{proof}[Proof of \eqref{allselectorD2}]
Similar to \eqref{eq:priork}, by using the Bayesian risk we have 
\begin{equation}\label{eq:prior2}
\begin{aligned}
&\inf_{\hat \eta_G \in \{0,1\}^m} \sup_{\beta^* \in  {\Theta}_{e,2} } 
    \mathbf{E}_{Y\sim P_{\beta^*} } \|\hat \eta_G - \eta^*_G\|_2^2 \\
\ge &\inf_{\hat T_G \in [0,1]^m} \underset{\eta^* \sim \pi_2}{\mathbf E} \underset{Y \sim P_{\beta^*} }{\mathbf E} \|\hat T_G(Y,X) - \eta^*_G\|_2^2 \\
    &-2 \underset{\eta^* \sim \pi_2}{\mathbf E} \underset{Y \sim P_{\beta^*} }{\mathbf E}  \Bigg\{ \bigg( \underset{\eta^{ {\Theta}_{e,2}}|Y}{\mathbf{E}}   \Big( \|\eta^{ {\Theta}_{e,2}}_G \|_2^2 \big| Y \Big) +
    \| \eta^*_G \|_2^2  \bigg) \mathbf1( a\eta^* \notin {\Theta}_{e,2}) \Bigg\} \\
\ge &\inf_{\hat T_G \in [0,1]^m} \underset{\eta^* \sim \pi_2}{\mathbf E} \underset{Y \sim P_{\beta^*} }{\mathbf E} \|\hat T_G(Y,X) - \eta^*_G\|_2^2\\
    &-2 s \underset{\eta^* \sim \pi_2}{\mathbf P} \left( a\eta^* \notin {\Theta}_{e,2} \right) 
    -2 \underset{\eta^* \sim \pi_2}{\mathbf E} \big( \| \eta^*_G \|_2^2 \mathbf1( a\eta^* \notin {\Theta}_{e,2}) \big),
\end{aligned}
\end{equation}
where for a $\eta^* \sim \pi_2$, we have $\| \eta_G^*\|_2^2 = \sum_{j\in[m]}(\eta_G^*)_j \sim Bin(m,s'/m)$.
By the construction of $\pi_2$ and ${\Theta}_{e,2}$, similar to \eqref{eq:23term}, we have 
$$
2 s \underset{\eta^* \sim \pi_2}{\mathbf P} \left( a\eta^* \notin {\Theta}_{e,2} \right) 
    + 2 \underset{\eta^* \sim \pi_2}{\mathbf E} \big( \| \eta^*_G \|_2^2 \mathbf1( a\eta^* \notin {\Theta}_{e,2}) \big) 
\le2(s+s')\exp\left(-\frac{3(s-s')^2}{2(s+2s')} \right).
$$

For the first term of \eqref{eq:prior2}, we have
\begin{align*}
& \inf_{\hat T_G \in [0,1]^m} \underset{\eta^* \sim \pi_2}{\mathbf E} \underset{Y \sim P_{\beta^*} }{\mathbf E} \|\hat T_G(Y,X) - \eta^*_G\|_2^2\\
\ge &\sum_{j=1}^m \underset{(\eta_{G}^*)_{\setminus j} }{\mathbf E}  \left\{ \inf_{\hat T_{G_j} }~
    \underset{(\eta_{G}^*)_j }{\mathbf E}\underset{Y }{\mathbf E} \left(   |\hat T_{G_j}(Y,X) - (\eta_G^*)_j|^2 \Big| (\eta_{G}^*)_{\setminus j}\right) \right\}\\
\ge &\sum_{j=1}^m \underset{(\eta_{G}^*)_{\setminus j} }{\mathbf E}  \left\{ \inf_{\hat T_{G_j} }~
    \underset{(\eta_{G}^*)_j }{\mathbf E}\underset{Y }{\mathbf E} \left(   |\hat T_{G_j}(\tilde Y_{G_j},X) - (\eta_G^*)_j|^2 \Big| (\eta_{G}^*)_{\setminus j}\right) \right\}\\ 
= &  \sum_{j=1}^m \underset{(\eta_{G}^*)_{\setminus j} }{\mathbf E} 
    \left\{ \inf_{\hat T_{G_j} }~ \left[ \frac{m-s'}{m}
    \underset{\tilde Y_{G_j}|(\eta_G^*)_j = 0}{\mathbf E} \left(
    \hat T_{G_j}^2 \Big| (\eta_{G}^*)_{\setminus j} \right)  \right.\right. \\
    & \qquad\qquad\qquad\qquad \left.\left.+ \frac{s^\prime}{m}~\underset{\tilde Y_{G_j}| (\eta_G^*)_j = 1 }{\mathbf E}
    \left( (1-\hat T_{G_j})^2 \Big| (\eta_{G}^*)_{\setminus j} \right) \right] \right\}\\
= &  \sum_{j=1}^m \underset{(\eta_{G}^*)_{\setminus j} }{\mathbf E} 
    \left\{ \frac{m-s'}{m}
    \underset{\tilde Y_{G_j}|(\eta_G^*)_j = 0}{\mathbf E} \left(
    (\hat T_{G_j}^*)^2 \Big| (\eta_{G}^*)_{\setminus j} \right)  \right.  \\
    & \qquad\qquad \quad \left.+ \frac{s^\prime}{m}~\underset{\tilde Y_{G_j}| (\eta_G^*)_j = 1 }{\mathbf E}
    \left( (1-\hat T_{G_j}^*)^2 \Big| (\eta_{G}^*)_{\setminus j} \right)  \right\},
\end{align*}
where in the second inequality, we use
\begin{equation*}
\tilde Y_{G_j}:= Y - \sum_{k \ne j}\sum_{i \in[s_0]} \beta_{ik}^* X_{(ik)} = \sum_{i\in[s_0]} a \eta_{ij}^* X_{(ij)} + \sigma \xi \in \mathbb R^n
\end{equation*}
to represent the marginal observation under the information of the other groups.
And in the last equality, we achieve the infimum by using the selector
\begin{equation*} 
\hat T_{G_j}^*(\tilde Y_{G_j},X) 
:=  \frac1{ 1+ \frac{m-s'}{s'} \frac {\boldsymbol \varphi_{\sigma}(\tilde Y_{G_j})}
    {\boldsymbol \varphi_{\sigma} \left( \tilde Y_{G_j}-\sum_{i\in [s_0]}a X_{(ij)}\right)} }.
\end{equation*}

Therefore, leveraging a technique similar to \eqref{D1T}, we obtain
\begin{equation}\label{D2T}
\begin{aligned}
& \inf_{\hat \eta_G \in \{0,1\}^m} \sup_{\beta^* \in  {\Theta}_{e,2}} 
    \mathbf{E}_{Y\sim P_{\beta^*} } \|\hat \eta_G(Y,X) - \eta^*_G\|_2^2\\
\ge& \frac{s'}{2} \left\{ \frac{m-s'}{s'}{\Phi}\left(-\frac{a \|\sum_{i\in[s_0]}X_{(ij)}\|_2}{2\sigma}-\frac{\sigma \log(m/s'-1)}{a\|\sum_{i\in[s_0]}X_{(ij)}\|_2 } \right)  \right.\\
  & \quad\quad\quad \left.+ {\Phi}\left(-\frac{a \|\sum_{i\in[s_0]}X_{(ij)}\|_2 }{2\sigma} + \frac{\sigma \log(m/s'-1)}{a \|\sum_{i\in[s_0]}X_{(ij)}\|_2 } \right) \right\}\\ 
\ge & \frac{s'}{2} \left\{ \frac{m-s}{s}{\Phi}\left(-\frac{a \|\sum_{i\in[s_0]}X_{(ij)}\|_2}{2\sigma}-\frac{\sigma \log(m/s-1)}{a\|\sum_{i\in[s_0]}X_{(ij)}\|_2 } \right)  \right.\\
  & \quad\quad\quad \left.+ {\Phi}\left(-\frac{a \|\sum_{i\in[s_0]}X_{(ij)}\|_2 }{2\sigma} + \frac{\sigma \log(m/s-1)}{a \|\sum_{i\in[s_0]}X_{(ij)}\|_2 } \right) \right\} ,
\end{aligned}
\end{equation}
where the last inequality follows Lemma \ref{mdPsi}.

Combining \eqref{eq:prior2} and \eqref{D2T} together, we have
\begin{equation}\label{eq: thetae2 general lb}
\begin{aligned}
& \inf_{\hat \eta} \sup_{\beta \in {\Theta}_{e,2}} \mathbf{E}_{Y\sim P_\beta} \|\hat \eta(Y,X) - \eta \|_2^2 \\
\ge& \frac{ s'}{2s}\left\{  (m-s) {\Phi}\left(-\frac{a \|\sum_{i\in[s_0]}X_{(ij)}\|_2 }{2\sigma} -
     \frac{\sigma \log(m/s-1)}{a \|\sum_{i\in[s_0]}X_{(ij)}\|_2 } \right) \right.  \\
     & \qquad\quad \left. +s {\Phi}\left(-\frac{a \|\sum_{i\in[s_0]}X_{(ij)}\|_2 }{2\sigma} + 
     \frac{\sigma \log(m/s-1)}{a \|\sum_{i\in[s_0]}X_{(ij)}\|_2 } \right) \right\} \\
    &- 2(s+s')\exp\left(-\frac{3(s-s')^2}{2(s+2s')} \right).
\end{aligned}
\end{equation}
Therefore, we complete the proof of Theorem \ref{allselectorD1D2}.
\end{proof}

\subsection{Proof of Theorem \ref{LB2priors}}\label{proof of th 3.2}
We next prove \eqref{eq: elementwise LB 1} and \eqref{eq: elementwise LB 2}, respectively.

\paragraph{Proof of \eqref{eq: elementwise LB 1}.}
Assume { $ss_0 \ge 54$ and $a^2 = \frac{\sigma^2}{5n} \log(ds-ss_0)$.}
We stress that the constants appearing in this proof may not be optimal.
Regardless, the existence of these lower bounds is assured.
Define $B:= \frac{\log(ss_0)}{10}+ \log(d/s_0 -1)$, by $a\le \frac{\sqrt2\sigma}{\sqrt n} \sqrt{B+ \sqrt{B^2 - \log^2(d/s_0-1)}}$, we can calculate that
\begin{equation}\label{eq: y}
\begin{aligned}
& -\frac{a\sqrt n}{2\sigma} +\frac{\sigma\log(d/s_0-1)}{a\sqrt n} \\
\ge&  \frac{\sqrt{B- \sqrt{B^2 - \log^2(d/s_0-1)}} - \sqrt{B+ \sqrt{B^2 - \log^2(d/s_0-1)}}}{\sqrt2} \\
=&-\sqrt{B - \log(d/s_0-1) }\\
=& -\sqrt{\frac{\log(ss_0)}{10} },
\end{aligned}
\end{equation}
where the first inequality holds because the function $f(x) = -\frac{x}{2\sigma} +\frac{\sigma\log(d/s_0-1)}{x} $ is monotonically decreasing for $x > 0 $.

Take $s_0' = s_0/2$. Then, by combining \eqref{eq: y} with \eqref{allselectorD1}, we have
\begin{equation}
\begin{aligned}
& \inf_{\hat \eta\in \{0,1\}^p} \sup_{\beta^* \in {\Theta}_{e,1} } \mathbf{E}_{Y\sim P_{\beta^*} }\| \hat \eta(Y,X) - \eta^* \|_2^2 \\
\ge& \frac{ss_0 }{4}{\Phi}\left( -\frac{a \sqrt n}{2\sigma} +\frac{\sigma\log(d/s_0-1)}{a \sqrt n}\right) 
    -3 s s_0 \exp\left(-\frac{3s s_0 }{16}\right) \\
\overset{(i)}\ge& \frac{(ss_0)^{19/20}}{8} \frac{\sqrt{2/\pi}}{1+\sqrt{(\log(ss_0))/10}} 
    -3 (s s_0)^{4/5} \cdot (ss_0)^{1/5} \exp\left(-\frac{3s s_0 }{16}\right)\\
\overset{(ii)}\ge& \frac{1}{9} (ss_0)^{\frac45} 
    - \frac{1}{90} (ss_0)^{\frac45} \\
=& \frac{1}{10} (ss_0)^{\frac45},
\end{aligned}
\end{equation}
where inequality (i) follows from \eqref{eq: y} and ${\Phi}(-y) \ge \sqrt{\frac2\pi} \frac{\exp(-y^2/2)}{2(y+1)}$ for every $y>0$, and inequality (ii) follows from the assumption $ss_0 \ge 54$.

\paragraph{Proof of \eqref{eq: elementwise LB 2}.}
Using a technique similar to \eqref{eq: y}, we can also demonstrate that, if $ a^2 s_0 \le \frac{\sigma^2 \log(m-s)}{5(1+\delta)n}$, then
\begin{equation}\label{eq: lower bound subspace 2}
\begin{aligned}
&\inf_{\hat \eta_G \in \{0,1\}^m} \sup_{\beta^* \in  {\Theta}_{e,2}} 
    \mathbf{E}_{Y\sim P_{\beta^*} } \left\{ \sum_{j\in [m]} \left| (\hat \eta_G)_j(Y,X) - (\eta^*_G)_j \right| \right\} \\
\ge& \frac{s}{4} \Phi\left( -\sqrt{\frac{\log s}{10}}\right) - 3s \exp\left(-\frac{3s}{16} \right)\\
\ge& \frac{1}8s^{\frac7{10}} - s^{\frac7{10}} \times 3 s^{\frac3{10}} e^{-3s/16}\\
\ge& \frac1{20}s^{\frac7{10}},
\end{aligned}
\end{equation}
where the last two inequalities follow from $s \ge 25$.

Combining the results derived from ${\Theta}_{e,1}$ and ${\Theta}_{e,2}$, if
\begin{align*}
a^2 \le& \frac{\sigma^2}{10n} \left( \frac{\log(m-s)}{s_0(1+\delta)} + \log(sd-ss_0)\right)\\
\le& \max\left(\frac{\sigma^2\log(m-s)}{5 s_0 n(1+\delta)} , \frac{\sigma^2\log(sd-ss_0)}{5n} \right),
\end{align*}
then it is impossible for both element-wise and group-wise selection to be consistent---at least one level of selection is unattainable.
Therefore, we conclude the proof of Theorem \ref{LB2priors}.

{
\subsection{Proof of Theorem \ref{th: LB group}}
}
\paragraph{Preliminary}
The group-wise signal analysis is quite similar to the proof of Theorem \ref{allselectorD1D2} and \ref{LB2priors}. We construct two subspaces of $\Theta_g(s,s_0,b)$ as
$$
\begin{aligned}
\Theta_{g,1} &:= \left\{ \beta \in \Theta\left(s,s_0 \right) ~\left|~
\begin{aligned}
    & G^*(\beta) = [s], \\
    & \| \beta_{G_j} \|_0 \ge {s_0}/{10}~ \text{ for every } j \in [s],\\
    & \beta_{ij} = a ~\text{ for every } (i,j) \in \operatorname{supp}(\beta)
\end{aligned} \right.\right\},\\
\Theta_{g,2} &:= \left\{ \beta \in \Theta\left(s,s_0 \right) ~\left|~
\beta_{ij} =\begin{cases}
 b/\sqrt{s_0}, & \text{ if } i \in [s_0] \text{ and } j \in G^*(\beta) \\
 0, & \text{ otherwise}
\end{cases} \right.\right\}.
\end{aligned}
$$
where in $\Theta_{g,1}$, the parameter $a$ satisfies that $\| \beta_{G_j}\|_2= \sqrt{a^2 \| \beta_{G_j}\|_0} \ge b$ for every $j \in [s]$.
The proof of Theorem \ref{th: LB group} uses Lemma \ref{lemma:minimaxori} with the priors $\pi_1$ and $\pi_2$ defined in Appendix \ref{sec: pi12}. 

\paragraph{Proof of \eqref{eq: elementwise LB 4}.}
We first concentrate on the lower bound for group-wise support recovery over $\Theta_{g,2}$. 
Since $\Theta_{g,2}$ and $\Theta_{e,2}$ share the same structural properties, the arguments in \eqref{eq:prior2}, \eqref{D2T}, \eqref{eq: thetae2 general lb}, and \eqref{eq: lower bound subspace 2} carry over directly.
In particular, taking $s' = s/2 \ge 12.5$ and $b^2 \le \frac{\sigma^2 \log(m-s)}{5(1+\delta)n} $, we obtain
\begin{equation*}\label{eq: lower bound subspace 3}
\inf_{\hat \eta_G \in \{0,1\}^m} \sup_{\beta^* \in  {\Theta}_{g,2}} \mathbf{E}_{Y\sim P_{\beta^*} } \left\{ \sum_{j\in [m]} \left| (\hat \eta_G)_j(Y,X) - (\eta^*_G)_j \right| \right\}  
\ge \frac1{20}s^{\frac7{10}}.
\end{equation*}

\paragraph{Proof of \eqref{eq: elementwise LB 3}.}
We then focus on the element-wise selection error in $\Theta_{g,1}$. 
With the prior $\pi_1$ and $1\le s_0'\le s_0$, we have 
\begin{equation}
\begin{aligned}
&\inf_{\hat \eta \in \{0,1\}^{d\times m}} \sup_{\beta^* \in  {\Theta}_{g,1}} 
    \mathbf{E}_{Y\sim P_{\beta^*} } \|\hat \eta - \eta^*\|_2^2 \\ 
\ge &\inf_{\hat T } ~\underset{\eta^* \sim \pi_1}{\mathbf E}~ \underset{Y \sim P_{\beta^*} }{\mathbf E}  \|\hat T(Y,X) - \eta^* \|_2^2 \\
    &-2 ss_0 \underset{\eta^* \sim \pi_1}{\mathbf P} \left( a\eta^* \notin {\Theta}_{g,1} \right) 
    -2 \underset{\eta^* \sim \pi_1}{\mathbf E} \big( \| \eta^*\|_2^2 \mathbf1( a\eta^* \notin {\Theta}_{g,1}) \big)\\
\ge & \frac{ss_0'}{2s_0}\left\{ (d-s_0) {\Phi}\left(-\frac{a \sqrt n}{2\sigma}-\frac{\sigma\log(d/s_0-1)}{a\sqrt n} \right) \right.\\ 
 & \qquad~~ \left. + s_0 {\Phi}\left(-\frac{a \sqrt n}{2\sigma}+\frac{\sigma \log(d/s_0-1)}{a\sqrt n} \right) \right\} \\
    & -  2s( s_0+s_0') \exp\left(-\frac{3s(s_0-s_0')^2}{2(s_0+2s_0')}\right) - 4s^2s_0 \exp\left(-\frac{15(s_0'-s_0/10)^2}{40s_0' - s_0}\right),
\end{aligned}
\end{equation}
where the first inequality follows from \eqref{eq:priork}, and in the last inequality, the first term (Bayesian risk) is derived from \eqref{D1T} and Lemma \ref{mdPsi}, and the last two terms are from the following:
\begin{equation*} 
   \begin{aligned}
&2ss_0 \mathbf P_{\eta^* \sim \pi_1} \Big( a\eta^* \notin {\Theta}_{g,1} \Big) + 2\mathbf E_{\eta^* \sim \pi_1}  \Big( \|\eta^*\|_2^2 \mathbf1(a\eta^* \notin {\Theta}_{g,1}) \Big)\\
=& 2ss_0 \Big\{ \mathbf P \left( v > ss_0\right) + \mathbf P \left( v \le ss_0, \text{ and } v_j < s_0 /10 \text{ for some } j \in [s]\right)\Big\}\\
& + 2\mathbf E \Big\{ v \Big[ \mathbf1(v  >ss_0)  + \mathbf1(v \le ss_0, \text{ and } v_j < s_0 /10 \text{ for some } j \in [s]) \Big]  \Big\} \\
\le& 2ss_0 \mathbf P \left( v > ss_0\right) + 2\mathbf E \left\{ v \mathbf1(v  >ss_0) \right\}  + 4ss_0 \sum_{j\in[s]} \mathbf P\left( v_j < s_0/10 \right)\\
\le& 2s (s_0+s_0') \exp\left(-\frac{3s(s_0-s_0')^2}{2(s_0+2s_0')}\right)
   +4s^2s_0 \exp\left(-\frac{15(s_0'-s_0/10)^2}{40s_0' - s_0}\right).
\end{aligned}
\end{equation*}

We now take $s_0'=2s_0/3$. Following the similar proof technique used in the first part of Appendix \ref{proof of th 3.2}, as $a^2 = \frac{\sigma^2 \log(ds-ss_0)}{10n}$ (thus leading $b^2\le \frac{\sigma^2 s_0 \log(ds-ss_0)}{100n}$), $ss_0 \ge 87$, and $s^{6/5} \le \frac7{200} s_0^{-1/5} \exp(0.1876s_0)$, we have
\begin{equation*}
\begin{aligned}
& \inf_{\hat \eta\in \{0,1\}^{d\times m}} \sup_{\beta^* \in {\Theta}_{g,1} } \mathbf{E}_{Y\sim P_{\beta^*}} \| \hat \eta(Y,X) - \eta^* \|_2^2 \\
\ge& \frac{(ss_0)^{39/40}}{6} \frac{\sqrt{2/\pi}}{1+\sqrt{(\log(ss_0))/20}} 
    - \frac{10}3s s_0\exp\left(-\frac{s s_0 }{14}\right)
    - 4s^2 s_0\exp\left(-0.1876s_0 \right)\\
\ge& \frac{1}{6} (ss_0)^{\frac45} - 0.016 (ss_0)^{\frac45} - 0.14(ss_0)^{\frac45} \\
> & \frac{1}{100} (ss_0)^{\frac45}.
\end{aligned}
\end{equation*}

Therefore, we complete the proof of Theorem \ref{th: LB group}.

\section{Auxiliary Lemmas for the oracle properties}\label{appC}
Firstly, we introduce a useful lemma from Theorem 2.1 in \cite{HK12}.
The conclusion of this lemma is not limited to the true sparsity level $(s,s_0)$; in fact, it can be applied to any $0< s' < m $ and $0<s_0' < d/e$. 
\begin{lemma}\label{good event}
Assume that $X\in \mathbb R^{n \times p}$ satisfies { DSRIP$( s',s_0', \delta )$ with $\delta  \in (0,1) $.}
For all $ k \in [n]$, assume that each $\xi_k$ is independent sub-Gaussian random variable with zero mean and $\|\xi\|_{\psi_2}^2 \le 2$. 
Define $\Xi_{ij} = \frac\sigma n X_{(ij)}^\top \xi$ and assume that $d \ge e s_0'$.
Then the event
\begin{equation*}
   \mathcal E(s',s_0'):= \left\{ \text{For all }S \in \mathcal S(s',s_0'),~\sum_{(i,j) \in S} \Xi_{ij}^2 < \frac{10\sigma^2s's_0'\Delta(s',s_0')}n \right\}
\end{equation*}
holds with probability greater than $1-\exp\left(-\frac13 s's_0' \Delta(s',s_0')\right)$, where
\begin{equation*}
    \Delta(s',s_0') := \frac1{s_0'} \log \frac{{\rm e} m}{s'} + \log \frac{{\rm e} d}{s_0'}.
\end{equation*}
\end{lemma}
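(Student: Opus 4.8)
The plan is to prove the bound for a single fixed support $S$ and then take a union bound over all maximal supports. First I would use monotonicity: the map $S \mapsto \sum_{(i,j)\in S}\Xi_{ij}^2$ is nondecreasing under set inclusion, so it suffices to control the event over \emph{maximal} patterns $S$ having exactly $s'$ support groups and exactly $s's_0'$ support elements located inside those groups. The number of such patterns is at most $\binom{m}{s'}\binom{s'd}{s's_0'}$, and applying $\binom{y}{x}\le (ey/x)^x$ its logarithm is bounded by $s'\log(em/s') + s's_0'\log(ed/s_0') = s's_0'\Delta(s',s_0') =: L$. Any $S\in\mathcal S(s',s_0')$ is contained in some maximal pattern, so controlling the maximal patterns controls all of $\mathcal S(s',s_0')$.

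For a fixed maximal $S$ with $|S|=s's_0'$, I would write $\Xi_S = B_S\xi$ with $B_S = \tfrac{\sigma}{n}X_S^\top$, so that $\sum_{(i,j)\in S}\Xi_{ij}^2 = \|\Xi_S\|_2^2 = \xi^\top \Sigma_S \xi$ where $\Sigma_S := B_S^\top B_S = \tfrac{\sigma^2}{n^2} X_S X_S^\top$ is positive semidefinite. Then I would invoke the subgaussian quadratic-form tail inequality (Theorem 2.1 of \cite{HK12}), exactly as already used in the proof of Theorem \ref{T8.5}: for the $1$-subgaussian $\xi$ and every $t>0$,
$$
\mathbf P\left( \|\Xi_S\|_2^2 > \mathrm{tr}(\Sigma_S) + 2\sqrt{\mathrm{tr}(\Sigma_S^2)\,t} + 2\|\Sigma_S\|_2\, t \right)\le e^{-t}.
$$
The three spectral quantities are controlled by DSRIP: since the eigenvalues of $X_S^\top X_S$ lie in $[n(1-\delta),n(1+\delta)]$, I obtain $\mathrm{tr}(\Sigma_S)\le \tfrac{\sigma^2(1+\delta)}{n}|S|$, $\mathrm{tr}(\Sigma_S^2)\le \tfrac{\sigma^4(1+\delta)^2}{n^2}|S|$, and $\|\Sigma_S\|_2\le \tfrac{\sigma^2(1+\delta)}{n}$.

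Finally I would choose $t = \tfrac43 L = \tfrac43 s's_0'\Delta$, so that the union-bound failure probability is $e^{L}\cdot e^{-t}=e^{-L/3}=e^{-\frac13 s's_0'\Delta(s',s_0')}$, matching the claimed rate. Substituting this $t$ and $|S|=s's_0'$ into the tail threshold gives a bound of the form $\tfrac{\sigma^2(1+\delta)s's_0'}{n}\bigl(1 + 2\sqrt{\tfrac43\Delta} + \tfrac83\Delta\bigr)$, and it remains to check this is strictly below $\tfrac{10\sigma^2 s's_0'\Delta}{n}$. This numeric verification is the main obstacle, since the constant $10$ is tight at the boundary $\delta\to 1$: using $1+\delta<2$ it reduces to $2 + 4\sqrt{\tfrac43\Delta} < \tfrac{14}{3}\Delta$. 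The standing hypothesis $d\ge e s_0'$ forces $\Delta \ge \log(ed/s_0')\ge 2$, and because the left-hand side grows like $\sqrt{\Delta}$ while the right-hand side grows linearly with a strictly positive margin already at $\Delta = 2$, the inequality holds for every admissible $\Delta$. Transferring the maximal-pattern bound to all $S\in\mathcal S(s',s_0')$ by monotonicity then yields $\mathbf P(\mathcal E(s',s_0'))\ge 1 - e^{-\frac13 s's_0'\Delta(s',s_0')}$, completing the proof.
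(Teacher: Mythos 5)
Your proposal is correct and follows essentially the same route as the paper's proof: both invoke the sub-Gaussian quadratic-form tail bound of Theorem 2.1 in \cite{HK12}, bound the trace and spectral-norm quantities via the DSRIP condition, take $t=\tfrac43 s's_0'\Delta(s',s_0')$, and union-bound over the at most $\binom{m}{s'}\binom{ds'}{s's_0'}\le e^{s's_0'\Delta(s',s_0')}$ maximal support patterns. Your explicit monotonicity reduction to maximal patterns and the numerical verification using $\Delta(s',s_0')\ge\log(ed/s_0')\ge 2$ merely spell out steps that the paper's argument leaves implicit.
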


\begin{proof}[{Proof of Lemma \ref{good event}}]
From Theorem 2.1 in \cite{HK12}, for all $t >0$ and all $ S \in \mathcal S(s',s_0')$, we have 
    \begin{equation}\label{eq:Hsu}
        P\left( \|X_S^\top\xi\|_2^2 \ge \text{tr}(X_S X_S^\top)+
     2\sqrt{\text{tr}(X_S X_S^\top X_S X_S^\top)t} + 2 \|X_S X_S^\top \|_2 t \right)
     \le e^{-t},
    \end{equation}
    where we also use $\| \cdot \|_2$ to denote the spectral norm of a matrix.
    Based on DSRIP$(s,s_0, \delta)$ condition, we have $
   n(1-\delta) \le \left\| X_S^\top {X}_{S} \right\|_2 \le n(1+\delta)$,
which leads
\begin{align*}
\text{tr}(X_S X_S^\top) &= \text{tr}(X_S^\top X_S ) \le 2s's_0'n,\\
\text{tr}(X_S X_S^\top X_S X_S^\top) &= \text{tr}(X_S^\top X_S \cdot X_S^\top X_S) \le  |S| \cdot \|X_S^\top X_S \|_2^2 \le 4 n^2 s's_0', \\
\|X_S X_S^\top \|_2 &=\|X_S^\top X_S \|_2 \le 2n.
\end{align*}

Back to \eqref{eq:Hsu}, we have
     \begin{equation}\label{ineq:t}
         P\left( \|X_S^\top\xi\|_2^2 \ge 2s's_0'n +
     4n \sqrt{s's_0't} + 4n t \right) \le e^{-t}.
    \end{equation}
Then take $t =\frac43 s's_0'\Delta(s',s_0')$, we obtain
\begin{align*}
        2s's_0'n +
     4n \sqrt{s's_0't} + 4n t  < 10ns's_0' \Delta(s',s_0').
\end{align*}
Therefore, for a fixed $ S \in \mathcal S (s',s_0')$ satisfying $|S| = s's_0'$, we have
    \begin{equation*}
         P\left( \sum_{(i,j) \in S}\Xi_{ij}^2 \ge \frac{ 10 \sigma^2}{n}s's_0'\Delta(s',s_0') \right)
        \le \exp \left( -\frac43 s's_0'\Delta(s',s_0') \right).
    \end{equation*}

Finally, by the probability union bound, we have 
\begin{align*}
& P\left( \forall S \in \mathcal S (s',s_0'), ~\sum_{(i,j)\in S} \Xi_{ij}^2 < \frac{10 \sigma^2s's_0'\Delta(s',s_0') }n \right) \\
= & 1 - P \left( \bigcup_{S \in \mathcal S (s',s_0'):~ |S|= s's_0} \left\{  ~ \sum_{(i,j)\in S} \Xi_{ij}^2 \ge \frac{ 10 \sigma^2s's_0'\Delta(s',s_0') }n \right\} \right)\\
{\ge} & 1 - \binom m{s'} \binom{ds'}{s's_0'} \exp \left( -\frac43 s's_0' \Delta(s',s_0') \right)\\
\ge& 1 - \exp \left( -\frac13 s's_0'\Delta(s',s_0') \right),\\
\end{align*} 
where the last inequality follows from $\binom yx \le (ey/x)^x$ for every $0<x<y$.
Thus, we complete the proof of Lemma \ref{good event}.
\end{proof}

\subsection{Lemma \ref{exactOGChi2} and its proof}
{
Now we focus on some inequalities used in the proof of Proposition \ref{T8}. 
Specifically, the following lemma bounds the squared sum of $\tilde \Xi$ on some $S_{OG}\in \mathcal S(As,s_0)$.
Recall that $\tilde \Xi := \frac\sigma n X^\top \left(\mathbf I_n - X_{S^*} (X_{S^*}^\top X_{S^*})^{-1}X_{S^*}^\top \right) \xi \in \mathbb R^p$, $A = \frac{8\delta^2}{(\kappa- \delta)^2} $, $\mu' = \sigma  \sqrt{ \frac{10}{n} \left[ \Delta(1,s_0) + \log(ss_0) \right] }$, and $\mu \ge \sqrt{4+ \frac{12 \delta^2}{(1-\delta)^2} } \mu'$.

\begin{lemma}\label{exactOGChi2}
	Under conditions of Proposition \ref{T8}, for any falsely discovered set $S_{OG} \in \mathcal S(As,s_0)$ which does not contain any support group, we have 
	\begin{align*}
		& P\left(  \sum_{(i,j) \in S_{OG}} \tilde \Xi_{ij}^2 \mathbf1\Big\{ \mathcal T_{\mu,s_0} \big(\tilde H^{t+1}\big)_{ij}\ne 0\Big\} 
		\le   \frac{(1-\delta)^2}{3} \|\tilde \beta^t - \tilde \beta^* \|_2^2 \right) \\
		\ge& 1- \exp\left( -\frac13 s_0 \Delta(1,s_0) \right).
	\end{align*} 
\end{lemma}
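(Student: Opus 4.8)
The plan is to control the survival indicator through the thresholding operator and then split the coordinates of $S_{OG}$ according to the magnitude of the noise term $\tilde\Xi_{ij}$, exactly paralleling the companion estimate \eqref{exactIG} for the inside-group set. Because $S_{OG}$ contains no support group, every $(i,j)\in S_{OG}$ lies in $(S^*)^c$, so $\tilde H^{t+1}_{ij}=\langle\Phi_{(i,j)}^\top,\tilde\beta^*-\tilde\beta^t\rangle+\tilde\Xi_{ij}$, and $\mathcal T_{\mu,s_0}(\tilde H^{t+1})_{ij}\ne0$ forces both the element-wise condition $|\tilde H^{t+1}_{ij}|\ge\mu$ (from $\mathcal T_\mu^{(1)}$ in \eqref{element}) and the group-wise condition $\sum_{(k,j)\ \text{kept}}(\tilde H^{t+1}_{kj})^2\ge s_0\mu^2$ (from $\mathcal T_{\mu,s_0}^{(2)}$ in \eqref{group}). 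The elementary inequality $\sqrt{4+12\delta^2/(1-\delta)^2}\ge 1+\sqrt6\,\delta/(1-\delta)$, obtained by squaring (the quadratic $6x^2-2\sqrt6\,x+3$ in $x=\delta/(1-\delta)$ has negative discriminant) and combined with $\mu\ge\sqrt{4+12\delta^2/(1-\delta)^2}\,\mu'$, will be used repeatedly; it yields $\mu-\mu'\ge\tfrac{\sqrt6\,\delta}{1-\delta}\mu'$.

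On the coordinates with $|\tilde\Xi_{ij}|<\mu'$ the estimate is deterministic. Survival gives $|\langle\Phi_{(i,j)}^\top,\tilde\beta^*-\tilde\beta^t\rangle|\ge\mu-\mu'\ge\tfrac{\sqrt6\,\delta}{1-\delta}\mu'$, whence $\tilde\Xi_{ij}^2<\mu'^2\le\tfrac{(1-\delta)^2}{6\delta^2}\langle\Phi_{(i,j)}^\top,\tilde\beta^*-\tilde\beta^t\rangle^2$. Summing over $S_{OG}$ and applying the DSRIP contraction \eqref{tech} --- legitimate since $S_{OG}\cup\operatorname{supp}(\tilde\beta^*-\tilde\beta^t)$ is $\big((1+2A)s,\tfrac{1+4A}{1+2A}s_0\big)$-sparse by the induction carried out in Proposition \ref{T8} --- bounds this part by $\tfrac{(1-\delta)^2}{6}\|\tilde\beta^t-\tilde\beta^*\|_2^2$.

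The coordinates with $|\tilde\Xi_{ij}|\ge\mu'$ form the probabilistic part and the main obstacle. Here I would work on the good event $\mathcal E(1,s_0)$ of Lemma \ref{good event} taken with $s'=1$, $s_0'=s_0$, which holds with probability at least $1-\exp(-\tfrac13 s_0\Delta(1,s_0))$ --- exactly the probability claimed in the statement. On $\mathcal E(1,s_0)$ the noise energy of the $s_0$ largest coordinates of any single group is below $10\sigma^2 s_0\Delta(1,s_0)/n\le s_0\mu'^2$, so $\|\tilde\Xi_{G_j\cap S_{OG}}\|_2\le\sqrt{s_0}\,\mu'$ for every group met by $S_{OG}$. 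The key step is to exploit the group-wise survival: a falsely discovered group can pass the threshold in \eqref{group} only if its kept energy is at least $s_0\mu^2$, so the selected coordinates satisfy $\|\tilde H^{t+1}_{G_j\cap S_{OG}}\|_2\ge\sqrt{s_0}\,\mu$; since the noise contributes at most $\sqrt{s_0}\,\mu'$, the deterministic part must dominate, giving $\|\langle\Phi_{G_j\cap S_{OG}},\tilde\beta^*-\tilde\beta^t\rangle\|_2\ge\sqrt{s_0}(\mu-\mu')\ge\tfrac{\sqrt6\,\delta}{1-\delta}\|\tilde\Xi_{G_j\cap S_{OG}}\|_2$. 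Squaring, summing over the surviving groups of $S_{OG}$, and invoking \eqref{tech} once more controls this part by $\tfrac{(1-\delta)^2}{6}\|\tilde\beta^t-\tilde\beta^*\|_2^2$ as well.

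Adding the two halves gives the asserted bound $\tfrac{(1-\delta)^2}{3}\|\tilde\beta^t-\tilde\beta^*\|_2^2$ on the event $\mathcal E(1,s_0)$, completing the proof. The delicate point is the calibration of $\mu$ and $\mu'$ so that the group-survival energy $s_0\mu^2$ strictly dominates the single-group noise level supplied by Lemma \ref{good event}; this is precisely where the constant $\sqrt{40+120\delta^2/(1-\delta)^2}$ built into the definition of $\mu$ is needed, and where the bookkeeping that $S_{OG}$ retains at most $s_0$ entries per group while the survival threshold concerns the whole group must be handled with care.
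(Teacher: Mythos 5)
Your proposal is correct and follows essentially the same route as the paper's proof: the same event $\mathcal{E}(1,s_0)$ from Lemma \ref{good event} with $s'=1$, $s_0'=s_0$ (hence the same probability bound), the same deterministic estimate on coordinates with $|\tilde\Xi_{ij}|<\mu'$, and the same group-level energy-domination argument (kept energy at least $s_0\mu^2$ against noise energy below $s_0\mu'^2$) closed by \eqref{tech}, with each half contributing $\tfrac{(1-\delta)^2}{6}\|\tilde\beta^t-\tilde\beta^*\|_2^2$. The only difference is organizational: the paper splits the large-noise coordinates further across Lemmas \ref{exactOGChi2:1st} and \ref{OGChi2:3rd} (showing noise can never survive $\mathcal T_{\mu',s_0}$ on the event), whereas you fold that conclusion directly into your second case --- including the same bookkeeping subtlety, which the paper's \eqref{OGgroupscaled} shares, about evaluating the group-survival energy on $S_{OG}\cap G_j$.
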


\begin{proof}[Proof of Lemma \ref{exactOGChi2}]
	Based on the definition of the double sparse operator $\mathcal T_{\mu,s_0}$ and $\mathcal T_{\mu',s_0}$, we have 
	\begin{equation*} 
		\begin{aligned}
			&   \sum_{(i,j) \in S_{OG}} \tilde \Xi_{ij}^2 \mathbf1\Big\{ \mathcal T_{\mu,s_0} 
			\big(\tilde H^{t+1}\big)_{ij}\ne 0\Big\} \\
			= &   \sum_{(i,j) \in S_{OG}} \tilde\Xi_{ij}^2 \mathbf1\Big\{\mathcal T_{\mu',s_0} \big(\tilde \Xi \big)_{ij}\ne 0, ~ \mathcal T_{\mu,s_0} \big(\tilde H^{t+1}\big)_{ij}\ne 0\Big\}\\
			& + \sum_{(i,j) \in S_{OG}} \tilde \Xi_{ij}^2 \mathbf1\Big\{\mathcal T_{\mu',s_0} 
			\big(\tilde \Xi \big)_{ij}= 0,~ \mathcal T_{\mu,s_0} \big(\tilde H^{t+1}\big)_{ij}\ne 0\Big\} \\
			\le & \sum_{(i,j) \in S_{OG}} \tilde \Xi_{ij}^2 \mathbf1\Big\{\mathcal T_{\mu',s_0} \big( \tilde\Xi \big)_{ij}\ne 0 \Big\} \\
			&+ \sum_{(i,j) \in S_{OG}} \tilde \Xi_{ij}^2\mathbf1\Big\{|\tilde\Xi_{ij}|< \mu',
			~ \mathcal T_{\mu,s_0} \big(\tilde H^{t+1}\big)_{ij}\ne 0\Big\} \\ 
			& + \sum_{(i,j) \in S_{OG}} \tilde \Xi_{ij}^2\mathbf1\Bigg\{|\tilde \Xi_{ij}|\ge \mu', \\
			& \qquad \qquad \qquad \qquad \sum_{k\in[d]} \tilde \Xi_{kj}^2 \mathbf1 (|\tilde \Xi_{kj}|\ge \mu')< s_0\mu'^2,~
			\mathcal T_{\mu,s_0} \big(\tilde H^{t+1}\big)_{ij}\ne 0\Bigg  \}.  \\ 
		\end{aligned}
	\end{equation*}
	
	Then, by combining Lemma \ref{exactOGChi2:1st}, \ref{OGChi2:2nd} and \ref{OGChi2:3rd} together, we complete the proof of Lemma \ref{exactOGChi2}.    
\end{proof}

\begin{lemma}\label{exactOGChi2:1st}
	Under all conditions of Proposition \ref{T8}, we have
	\begin{equation*} 
		P \left( \sum_{(i,j) \in [d] \times [m]} \mathbf1\Big\{\mathcal T_{\mu',s_0} \big(\tilde \Xi \big)_{ij} \ne 0 \Big\} = 0 \right) \ge 1- \exp\left( -\frac13 s_0 \Delta(1,s_0) \right),
	\end{equation*}
	that is, no element or group of $\tilde \Xi $ can be selected by the operator $ \mathcal T_{\mu',s_0} = \mathcal T_{\mu',s_0}^{(2)} \circ \mathcal T_{\mu'}^{(1)}$.
\end{lemma}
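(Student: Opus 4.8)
The plan is to reduce the event ``some entry of $\mathcal T_{\mu',s_0}(\tilde\Xi)$ is nonzero'' to the deviation event already controlled by Lemma \ref{good event}, exploiting that the threshold $\mu'$ is deliberately chosen larger than the concentration bound on the restricted energy of $\tilde\Xi$. First I would observe that, because the group-wise operator $\mathcal T_{\mu',s_0}^{(2)}$ is applied after the element-wise operator $\mathcal T_{\mu'}^{(1)}$, the count $\sum_{(i,j)\in[d]\times[m]} \mathbf 1\{\mathcal T_{\mu',s_0}(\tilde\Xi)_{ij} \ne 0\}$ is strictly positive if and only if at least one group $G_j$ survives the group test, i.e.
$$
\sum_{k \in [d]} \tilde\Xi_{kj}^2 \, \mathbf 1\bigl(|\tilde\Xi_{kj}| \ge \mu'\bigr) \ge s_0 (\mu')^2 .
$$
Hence it suffices to show that, with the stated probability, no group survives.

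Next I would carry out the key combinatorial reduction. Fixing a group $G_j$ and setting $T_j := \{k \in [d] : |\tilde\Xi_{kj}| \ge \mu'\}$, I split the survival event into two cases according to $|T_j|$: when $|T_j| \le s_0$, the index set $S := \{(k,j) : k \in T_j\}$ lies in $\mathcal S(1,s_0)$ and satisfies $\|\tilde\Xi_S\|_2^2 = \sum_{k\in[d]} \tilde\Xi_{kj}^2 \mathbf 1(|\tilde\Xi_{kj}| \ge \mu') \ge s_0(\mu')^2$; when $|T_j| > s_0$, taking $S$ to be any $s_0$ indices of $T_j$ (all placed inside $G_j$) again gives $S \in \mathcal S(1,s_0)$ with $\|\tilde\Xi_S\|_2^2 \ge s_0(\mu')^2$, since each retained entry exceeds $\mu'$ in magnitude. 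In either case the survival of some group forces
$$
\sup_{S \in \mathcal S(1,s_0)} \|\tilde\Xi_S\|_2^2 \ge s_0 (\mu')^2 .
$$
I would then invoke Lemma \ref{good event} with $s'=1$ and $s_0'=s_0$ applied to $\tilde\Xi$: on an event of probability at least $1 - \exp(-\tfrac13 s_0 \Delta(1,s_0))$ one has $\sup_{S \in \mathcal S(1,s_0)} \|\tilde\Xi_S\|_2^2 < \frac{10\sigma^2 s_0 \Delta(1,s_0)}{n}$. Since $\mu' = \sigma\sqrt{\tfrac{10}{n}[\Delta(1,s_0) + \log(ss_0)]}$ gives $s_0(\mu')^2 = \frac{10\sigma^2 s_0}{n}[\Delta(1,s_0) + \log(ss_0)] > \frac{10\sigma^2 s_0 \Delta(1,s_0)}{n}$, the extra summand $\log(ss_0)$ produces a strict gap. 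Thus on this event the restricted energy never reaches $s_0(\mu')^2$, so no group survives and the count equals $0$, yielding the claim with the matching probability.

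The main obstacle I anticipate is the legitimate application of Lemma \ref{good event} to the projected noise $\tilde\Xi = \frac\sigma n X^\top(\mathbf I_n - P_{S^*})\xi$ rather than to $\Xi = \frac\sigma n X^\top \xi$, for which that lemma is literally stated. The fix is to note that $\tilde\Xi_S = \frac\sigma n \bigl((\mathbf I_n - P_{S^*})X_S\bigr)^\top \xi$, so the Hsu--Kakade--Zhang tail bound underlying Lemma \ref{good event} applies with $X_S$ replaced by $(\mathbf I_n - P_{S^*})X_S$; because $\mathbf I_n - P_{S^*}$ is a contraction, the trace and spectral-norm quantities of $(\mathbf I_n - P_{S^*})X_S$ are all dominated by those of $X_S$ (which are controlled by the DSRIP condition, valid for every $S \in \mathcal S(1,s_0) \subseteq \mathcal S((1+2A)s, \tfrac{1+4A}{1+2A}s_0)$), so the identical tail inequality and hence the identical probability bound carry over. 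A minor point to record is that this containment is why no new sparsity budget is consumed, keeping the probability exactly $1 - \exp(-\tfrac13 s_0 \Delta(1,s_0))$.
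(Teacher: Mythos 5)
Your proposal is correct and follows essentially the same route as the paper: both invoke Lemma \ref{good event} with $(s',s_0')=(1,s_0)$ and run the identical two-case analysis (more than $s_0$ versus at most $s_0$ entries above the element threshold in a surviving group) to show that any selected group would force $\sup_{S\in\mathcal S(1,s_0)}\|\tilde\Xi_S\|_2^2\ge s_0(\mu')^2$, contradicting the high-probability energy bound. Your contraction argument for applying the tail bound to the projected noise $\tilde\Xi$ is a more explicit justification of the step the paper dispatches with its one-line remark that $\|\tilde\Xi_{ij}\|_{\psi_2}^2\le 2\sigma^2/n$, which is a welcome tightening rather than a different approach.
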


\begin{proof}[Proof of Lemma \ref{exactOGChi2:1st}]
	Firstly note that $\| \tilde \Xi_{ij} \|_{\psi_2}^2 \le \frac{2\sigma^2}{n} $.
	Then we take $s' = 1$, $s_0'=s_0$ and conclude that event $\mathcal E(1,s_0)$ (defined in Lemma \ref{good event}) holds with probability greater than $1- \exp\left(-\frac13 s_0 \Delta(1,s_0) \right)$.
	
	Based on the event $\mathcal E(1,s_0)$, we next prove that no group in $\tilde \Xi $ can be discovered under $ \mathcal T_{\mu',s_0}$ by using contradiction. 
	Specifically, if there exists a group $G_{j_0}$ satisfies $\mathcal T_{\mu',s_0}(\tilde \Xi_{G_{j_0}} ) \ne \mathbf 0_d $, we can separate the proof into two cases:
	\begin{enumerate}
		\item If more than $s_0$ entries in $\tilde \Xi_{G_{j_0}}$ are discovered, we can select arbitrary $s_0$ discovered entries from $\tilde \Xi_{G_{j_0}}$, and use $S'$ to denote their index set.
		Then, by the definition of the element-wise thresholding operator $\mathcal T_{\mu'}^{(1)}$ we obtain
		\begin{equation*} 
			\sum_{(i,j) \in S'} \tilde \Xi_{ij}^2  
			\ge  s_0 \mu'^2 
			>\frac{10\sigma^2 s_0 \Delta(1,s_0)}{n},
		\end{equation*}
		which contradicts the event $\mathcal E(1,s_0)$.
		
		\item If only less than $s_0$ entries in $\tilde \Xi_{ G_{j_0}}$ is discovered, we select all discovered entries from $\tilde \Xi_{ G_{j_0}}$, and use $S''$ to denote their index set.
		Then, by the definition of the group-wise thresholding operator $\mathcal T_{\mu',s_0}^{(2)}$ we obtain
		\begin{equation*} 
			\sum_{(i,j) \in S''} \tilde \Xi_{ij}^2  
			\ge  s_0 \mu'^2 
			>\frac{10\sigma^2 s_0 \Delta(1,s_0)}{n},
		\end{equation*}
		which contradicts the event $\mathcal E(1,s_0)$ again.
	\end{enumerate}
	
	Therefore, we prove that no group in $\tilde \Xi$ can be selected, which completes the proof of Lemma \ref{exactOGChi2:1st}.
\end{proof}

\begin{lemma}\label{OGChi2:2nd}
	Under conditions of Proposition \ref{T8}, for any falsely discovered set $S_{OG} \in \mathcal S(As,s_0)$ which does not contain any support group
	, we have 
	\begin{equation}\label{eq:exactOGChi2:2nd}
		\sum_{(i,j) \in S_{OG}} \tilde \Xi_{ij}^2\mathbf1\Big\{| \tilde\Xi_{ij}|< \mu',
		~ \mathcal T_{\mu,s_0} \big(\tilde H^{t+1}\big)_{ij}\ne 0\Big\}
		\le  \frac{(1-\delta)^2}{6} \left\|  \tilde \beta^* -\tilde \beta^t \right\|_2^2.
	\end{equation} 
\end{lemma}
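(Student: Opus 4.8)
The plan is to establish this as a purely deterministic inequality, relying only on the defining property of the thresholding operator together with the induction hypotheses on the sparsity of $\tilde\beta^t$ inherited from the proof of Proposition \ref{T8}; no concentration argument is needed here. First I would exploit that $S_{OG}$ contains no support group, so every $(i,j)\in S_{OG}$ lies in $(S^*)^c$, whence $\tilde\beta^*_{ij}=0$ and the decomposition from \eqref{eq: grad learning} reads $\tilde H^{t+1}_{ij}=\langle\Phi_{(i,j)}^\top,\tilde\beta^*-\tilde\beta^t\rangle+\tilde\Xi_{ij}$. Since $\mathcal T_{\mu,s_0}=\mathcal T^{(2)}_{\mu,s_0}\circ\mathcal T^{(1)}_\mu$, any coordinate surviving the composite operator must first survive the element-wise operator $\mathcal T^{(1)}_\mu$; hence on the event $\{\mathcal T_{\mu,s_0}(\tilde H^{t+1})_{ij}\ne 0\}$ we necessarily have $|\tilde H^{t+1}_{ij}|\ge\mu$.

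The crux is a pointwise domination of the noise by the bias term. On the indicator set we combine $|\tilde H^{t+1}_{ij}|\ge\mu$ with $|\tilde\Xi_{ij}|<\mu'$ via the triangle inequality to get $|\langle\Phi_{(i,j)}^\top,\tilde\beta^*-\tilde\beta^t\rangle|\ge\mu-|\tilde\Xi_{ij}|>\mu-\mu'$. Using the recalled bound $\mu\ge\sqrt{4+\tfrac{12\delta^2}{(1-\delta)^2}}\,\mu'$, it remains to verify the elementary inequality $\sqrt{4+\tfrac{12\delta^2}{(1-\delta)^2}}\ge 1+\tfrac{\sqrt6\,\delta}{1-\delta}$; writing $u=\tfrac{\delta}{1-\delta}\ge0$ and squaring, this reduces to $6u^2-2\sqrt6\,u+3\ge0$, whose discriminant $24-72<0$ makes it sign-definite, so the inequality always holds. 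Consequently $\mu-\mu'\ge\tfrac{\sqrt6\,\delta}{1-\delta}\mu'>\tfrac{\sqrt6\,\delta}{1-\delta}|\tilde\Xi_{ij}|$, which rearranges to the pointwise bound $\tilde\Xi_{ij}^2<\tfrac{(1-\delta)^2}{6\delta^2}\langle\Phi_{(i,j)}^\top,\tilde\beta^*-\tilde\beta^t\rangle^2$; since the right-hand side is nonnegative, this extends trivially to all of $S_{OG}$ after inserting the indicator on the left.

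Summing over $S_{OG}$ and invoking the RIP-type estimate \eqref{tech} then closes the argument:
\[
\sum_{(i,j)\in S_{OG}}\tilde\Xi_{ij}^2\,\mathbf1\{\cdots\}
\le \frac{(1-\delta)^2}{6\delta^2}\sum_{(i,j)\in S_{OG}}\langle\Phi_{(i,j)}^\top,\tilde\beta^*-\tilde\beta^t\rangle^2
\le \frac{(1-\delta)^2}{6\delta^2}\cdot\delta^2\,\|\tilde\beta^*-\tilde\beta^t\|_2^2,
\]
which is exactly the claimed $\tfrac{(1-\delta)^2}{6}\|\tilde\beta^*-\tilde\beta^t\|_2^2$. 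The one point requiring care — and the main obstacle — is justifying that \eqref{tech} applies with $S=S_{OG}$: I must check that $S_{OG}\cup\operatorname{supp}(\tilde\beta^*-\tilde\beta^t)$ lies in $\mathcal S\!\left((1+2A)s,\tfrac{1+4A}{1+2A}s_0\right)$. This follows by the group/element bookkeeping furnished by the induction hypothesis \eqref{eq: 2.2dsresult}, namely $\operatorname{supp}(\tilde\beta^*-\tilde\beta^t)\subseteq S^*\cup\tilde S^t_{OG}\cup\tilde S^t_{IG}$ with $S^*\in\mathcal S(s,s_0)$, $\tilde S^t_{OG}\in\mathcal S(As,s_0)$ and $\tilde S^t_{IG}\in\mathcal S(s,As_0)$ (the latter nested inside the support groups), so the union has at most $(1+2A)s$ active groups and at most $(1+4A)ss_0$ active elements, placing it in the DSRIP-controlled class.
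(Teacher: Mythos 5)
Your proof is correct and follows essentially the same route as the paper's: survival of the composite operator forces $|\tilde H^{t+1}_{ij}|\ge\mu$, the triangle inequality together with $|\tilde\Xi_{ij}|<\mu'$ and the relation $\mu\ge\bigl(1+\tfrac{\sqrt6\,\delta}{1-\delta}\bigr)\mu'$ dominates the noise by the bias term $\langle\Phi_{(i,j)}^\top,\tilde\beta^*-\tilde\beta^t\rangle$, and \eqref{tech} closes the sum. Your explicit verifications of the elementary inequality $\sqrt{4+\tfrac{12\delta^2}{(1-\delta)^2}}\ge 1+\tfrac{\sqrt6\,\delta}{1-\delta}$ and of the sparsity bookkeeping needed to invoke \eqref{tech} are details the paper leaves implicit, and both check out.
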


\begin{proof}[Proof of Lemma \ref{OGChi2:2nd}]
	
	By the decomposition $\tilde H_{ij}^{t+1} = \langle{\Phi}_{(i,j)}^\top, \tilde \beta^* - \tilde\beta^{t} \rangle + \tilde \Xi_{ij}$ for every $ (i,j) \in (S^*)^c$ and the DSRIP$\left( (1+2A)s,\frac{1+4A}{1+2A}s_0,\delta \right)$ condition, we have
	\begin{equation}
		\begin{aligned}\label{ElementXifailHsuccess}
			& \sum_{(i,j) \in S_{OG}} \tilde \Xi_{ij}^2\mathbf1\Big\{|\tilde \Xi_{ij}|< \mu',
			~ \mathcal T_{\mu,s_0} \big(\tilde H^{t+1}\big)_{ij}\ne 0\Big\}\\
			\le & \sum_{(i,j) \in S_{OG}} \mu'^2\mathbf1\Big\{|\tilde \Xi_{ij}|< \mu',  ~ |\tilde \Xi_{ij}+ \langle{\Phi}_{(i,j)}^\top, \tilde \beta^* - \tilde\beta^{t} \rangle| \ge \mu \Big\}\\
			\le & \sum_{(i,j) \in S_{OG}} \mu'^2\mathbf1\Big\{|\tilde \Xi_{ij}|< \mu',  ~ |\langle{\Phi}_{(i,j)}^\top, \tilde \beta^* - \tilde\beta^{t} \rangle| \ge \mu -\mu' \Big\}\\
			\overset{(i)}\le & \sum_{(i,j) \in S_{OG}} \mu'^2 \mathbf1\left\{ \mu' \le \frac{1-\delta}{\sqrt6 \delta} |\langle{\Phi}_{(i,j)}^\top, \tilde \beta^* - \tilde\beta^{t} \rangle|  \right\}\\
			\le &\frac{(1-\delta)^2}{6\delta^2} \sum_{(i,j) \in S_{OG} } \langle {\Phi}_{(i,j)}^\top, \tilde \beta^* -\tilde \beta^t \rangle^2
            ~\le~\frac{(1-\delta)^2}6 \left\|  \tilde \beta^* -\tilde \beta^t \right\|_2^2,
		\end{aligned}
	\end{equation}
where inequality (i) follows from $\mu > \left(1+ \frac{\sqrt6 \delta}{1-\delta}\right)\mu'$.
Therefore we complete the proof of Lemma \ref{OGChi2:2nd}. 
\end{proof}

\begin{lemma}\label{OGChi2:3rd}
	Under conditions of Proposition \ref{T8}, for any falsely discovered set $S_{OG} \in \mathcal S(As,s_0)$ which does not contain any support group, we use $G_{OG}$ to represent the group index set of $S_{OG}$. 
	Then we have
	\begin{align*}\label{eq:exactOGChi2:3rd}
		\sum_{(i,j) \in S_{OG}} \tilde \Xi_{ij}^2\mathbf1 &\Bigg\{|\tilde \Xi_{ij}|\ge \mu',
		~ \sum_{k\in [d]} \tilde \Xi_{kj}^2 \mathbf1 (|\tilde \Xi_{kj}|\ge \mu')< s_0\mu'^2,~ \mathcal T_{\mu,s_0} \big(\tilde H^{t+1}\big)_{ij}\ne 0 \Bigg\}\\ 
		\le&  \frac{(1-\delta)^2}6 \left\| \tilde \beta^t - \tilde \beta^* \right\|_2^2. 
	\end{align*} 
\end{lemma}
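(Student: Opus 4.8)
The plan is to prove the estimate group by group and then assemble the pieces through the double-sparse RIP inequality \eqref{tech}. Fix a falsely discovered group $j$ that contributes to the sum and write $R_j := S_{OG}\cap G_j$ for its (at most $s_0$) surviving entries; for $(k,j)\in R_j$ I would use the decomposition $\tilde H^{t+1}_{kj}=\langle{\Phi}_{(k,j)}^\top,\tilde\beta^*-\tilde\beta^{t}\rangle+\tilde\Xi_{kj}$, which is valid since $j\notin G^*$. Two group-level facts drive the argument. First, because each $(k,j)\in R_j$ survives $\mathcal T_{\mu,s_0}(\tilde H^{t+1})$, every such entry satisfies $|\tilde H^{t+1}_{kj}|\ge\mu$ and the retained group energy obeys $\sum_{(k,j)\in R_j}(\tilde H^{t+1}_{kj})^2\ge s_0\mu^2$ (the selection keeps $s_0$ entries, each of size at least $\mu^2$, or the entire surviving group when fewer than $s_0$ entries survive). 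Second, the hypothesis of the lemma is exactly the failure of the noise group, $\sum_{k\in[d]}\tilde\Xi_{kj}^2\mathbf1(|\tilde\Xi_{kj}|\ge\mu')<s_0\mu'^2$.

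Exploiting $|R_j|\le s_0$, I would first control the total noise mass on $R_j$: splitting according to $|\tilde\Xi_{kj}|<\mu'$ and $|\tilde\Xi_{kj}|\ge\mu'$ gives $\sum_{(k,j)\in R_j}\tilde\Xi_{kj}^2<s_0\mu'^2+s_0\mu'^2=2s_0\mu'^2$, where the first term is bounded because there are at most $s_0$ small-noise entries and the second by the noise-group failure above. Next, Young's inequality $(\phi+\xi)^2\le(1+\tau)\phi^2+(1+\tau^{-1})\xi^2$ applied entrywise and summed over $R_j$ yields $s_0\mu^2\le(1+\tau)P_j+2(1+\tau^{-1})s_0\mu'^2$, where I abbreviate $P_j:=\sum_{(k,j)\in R_j}\langle{\Phi}_{(k,j)}^\top,\tilde\beta^*-\tilde\beta^{t}\rangle^2$. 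Solving for $P_j$ produces a lower bound on the group's $\Phi$-energy purely in terms of $s_0$, $\mu$, $\mu'$, and $\tau$.

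It then suffices to show that the relevant noise part $N_j:=\sum_{(k,j)\in R_j}\tilde\Xi_{kj}^2\mathbf1(|\tilde\Xi_{kj}|\ge\mu')<s_0\mu'^2$ is at most $\frac{(1-\delta)^2}{6\delta^2}P_j$; summing this over the groups in $G_{OG}$ and invoking \eqref{tech} with $S=S_{OG}\in\mathcal S(As,s_0)$ turns $\sum_j P_j\le\delta^2\|\tilde\beta^*-\tilde\beta^{t}\|_2^2$ into the claimed $\frac{(1-\delta)^2}{6}\|\tilde\beta^*-\tilde\beta^{t}\|_2^2$. Using the lower bound on $P_j$, the required per-group inequality reduces to the scalar condition $\mu^2\ge\mu'^2\bigl[\,2(1+\tau^{-1})+\tfrac{6\delta^2}{(1-\delta)^2}(1+\tau)\,\bigr]$. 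This is exactly where the choice of $\mu$ from \eqref{eq: def mu} enters: taking $\tau=\frac{1-\delta}{\sqrt3\,\delta}$ collapses the bracket to $2+\frac{4\sqrt3\,\delta}{1-\delta}+\frac{6\delta^2}{(1-\delta)^2}$, and the surplus over the available bound $\mu\ge\sqrt{4+\frac{12\delta^2}{(1-\delta)^2}}\,\mu'$ is governed by $6u^2-4\sqrt3\,u+2=6\bigl(u-\tfrac1{\sqrt3}\bigr)^2\ge0$ with $u=\delta/(1-\delta)$, which holds for every $\delta\in(0,1)$.

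The main obstacle is precisely this constant bookkeeping, since the bound must hold uniformly over the full range $\delta\in(0,1)$ and therefore the coefficients cannot be estimated crudely. The observation $|R_j|\le s_0$ is what keeps the small-noise contribution at the scale $s_0\mu'^2$, rather than at a scale proportional to the uncontrolled cardinality of the thresholded support of $\tilde H^{t+1}$; and the free parameter $\tau$ must be tuned to its optimum $\frac{1-\delta}{\sqrt3\,\delta}$ so that the discriminant of the resulting quadratic vanishes and the verification succeeds at the worst-case $\delta$ (near $\delta/(1-\delta)=1/\sqrt3$). Everything else is routine: the entrywise inequalities, the summation over $j\in G_{OG}$, and the final application of \eqref{tech} to pass from $\sum_j P_j$ to $\delta^2\|\tilde\beta^*-\tilde\beta^{t}\|_2^2$.
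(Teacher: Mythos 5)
Your proof is correct, and it follows the same skeleton as the paper's argument: work group by group, use survival under $\mathcal T_{\mu,s_0}$ to lower-bound the group energy by $s_0\mu^2$, use the failure of the noise group under $\mathcal T_{\mu',s_0}$ to cap the large-noise mass by $s_0\mu'^2$, deduce the per-group bound $P_j \ge \frac{6\delta^2}{(1-\delta)^2}s_0\mu'^2$, and finish by summing over $G_{OG}$ and applying \eqref{tech}. Where you genuinely diverge is in how the cross terms are handled. The paper applies $(a+b)^2\le 2a^2+2b^2$ and then splits the noise \emph{entrywise}: entries with $|\tilde\Xi_{kj}|\ge\mu'$ are absorbed by the group-failure hypothesis, while entries with $|\tilde\Xi_{kj}|<\mu'$ that nevertheless survive force $|\langle\Phi_{(k,j)}^\top,\tilde\beta^*-\tilde\beta^t\rangle|\ge\mu-\mu'\ge\frac{\sqrt6\delta}{1-\delta}\mu'$, so each such noise term is converted into $\frac{(1-\delta)^2}{6\delta^2}$ times a $\Phi$-energy term (this is the mechanism inside \eqref{OGgroupscaled}, mirroring Lemma \ref{OGChi2:2nd}). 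You never perform this conversion: you bound the entire small-noise mass crudely by $|R_j|\mu'^2\le s_0\mu'^2$ and recover the slack with a tunable Young parameter, optimized at $\tau=\frac{1-\delta}{\sqrt3\delta}$. Both routes land on exactly the same constant; your identity $6u^2-4\sqrt3u+2=6(u-1/\sqrt3)^2$ with $u=\delta/(1-\delta)$ has the added value of showing that the factor $\sqrt{4+\frac{12\delta^2}{(1-\delta)^2}}$ relating $\mu$ to $\mu'$ is precisely what this argument requires, with the worst case at $\delta/(1-\delta)=1/\sqrt3$, uniformly over $\delta\in(0,1)$.

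One caveat to flag: your small-noise step relies on $|R_j|\le s_0$, but membership $S_{OG}\in\mathcal S(As,s_0)$ only constrains the number of groups and the \emph{total} cardinality, not the per-group cardinality; and the lemma is also invoked (e.g., with $\tilde S_{OG}^{t+1}$ in \eqref{eq: 2.2 l21}, or with $S_{OG}''$ in \eqref{step1.5}) on sets whose intersection with a single group can exceed $s_0$ surviving entries. Your parenthetical reading ($R_j$ is either $s_0$ surviving entries or the entire surviving group) matches the construction used in \eqref{exactOG} but not those cases. The repair is one line: when $|R_j|>s_0$ and all counted entries survive individually, the group energy is at least $|R_j|\mu^2$, the small-noise mass is at most $|R_j|\mu'^2$, and the large-noise mass is at most $s_0\mu'^2\le|R_j|\mu'^2$, so the same scalar inequality closes the argument with $s_0$ replaced by $|R_j|$. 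The paper sidesteps this entirely by reading the survival condition as $\mathcal T_{\mu,s_0}\big(\tilde H^{t+1}_{S_{OG}\cap G_j}\big)\ne\mathbf 0$, i.e., restricted to $S_{OG}\cap G_j$, which makes its chain of inequalities insensitive to per-group cardinality.
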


\begin{proof}[Proof of Lemma \ref{OGChi2:3rd}]
	For any non-support group $G_j$ satisfies that
	$$\mathcal T_{\mu',s_0} \big( \tilde \Xi_{ G_j}\big)=\mathbf 0_d,
	\quad \mathcal T_{\mu,s_0} \big(\tilde H^{t+1}_{S_{OG} \cap G_j}\big) \ne \mathbf 0,$$
	by the double sparse operator $\mathcal T_{\mu,s_0}$ and $\mu \ge\sqrt{ 4+ \frac{12 \delta^2}{(1-\delta)^2}} \mu'$, 
	we have 
	\begin{equation}\label{OGgroupscaled}
		\begin{aligned}
			s_0\mu^2  
	\le& \sum_{k: (k,j) \in S_{OG}} \Big( \underbrace{\tilde \Xi_{kj} + \langle {\Phi}_{(k,j)}^\top, \tilde \beta^* -\tilde \beta^t \rangle}_{\tilde H_{kj}^{t+1}} \Big)^2  \mathbf1\Big( |\tilde \Xi_{kj} + \langle {\Phi}_{(k,j)}^\top, \tilde \beta^* -\tilde \beta^t \rangle| \ge \mu \Big)\\
	\le& \sum_{k: (k,j) \in S_{OG}} 2 \langle {\Phi}_{(k,j)}^\top, \tilde \beta^* -\tilde \beta^t \rangle^2 \mathbf1\Big( |\tilde \Xi_{kj}+\langle {\Phi}_{(k,j)}^\top, \tilde \beta^* -\tilde \beta^t \rangle |\ge\mu\Big)\\
			& +\sum_{k: (k,j) \in S_{OG}} 2 \tilde \Xi_{kj}^2 \mathbf1\Big( |\tilde \Xi_{kj}+\langle {\Phi}_{(k,j)}^\top, \tilde \beta^* -\tilde \beta^t \rangle |\ge\mu\Big) \\
	\le& 2 \sum_{k: (k,j) \in S_{OG}} \langle {\Phi}_{(k,j)}^\top, \tilde \beta^* -\tilde \beta^t \rangle^2 + 2\sum_{k: (k,j) \in S_{OG}}  \tilde \Xi_{kj}^2 \mathbf1\Big( |\tilde \Xi_{kj}|\ge\mu'\Big)\\
			&+ \sum_{k: (k,j) \in S_{OG}} 2 \tilde \Xi_{kj}^2 \mathbf1\Big( |\tilde \Xi_{kj}|<\mu' \le \frac{1-\delta}{\sqrt6 \delta}|\langle {\Phi}_{(k,j)}^\top, \tilde \beta^* -\tilde \beta^t \rangle|\Big)\\
	\le & 2s_0 \mu'^2 + 2\left( 1+ \frac{(1-\delta)^2}{6\delta^2}\right)\sum_{k: (k,j) \in S_{OG}} \langle {\Phi}_{(k,j)}^\top, \tilde \beta^* -\tilde \beta^t \rangle^2,
		\end{aligned}
	\end{equation}
	where the last inequality follows from 
	$$\sum_{k\in[d]} \tilde \Xi_{kj}^2 \mathbf1 (|\tilde \Xi_{kj}|\ge \mu')< s_0\mu'^2$$
	by using $\mathcal T_{\mu',s_0} \big( \tilde \Xi_{G_j}\big)=\mathbf 0_d$.
	Therefore, based on \eqref{OGgroupscaled}, we conclude that 
	$$\frac{6\delta^2}{(1-\delta)^2}s_0 \mu'^2 \le  \sum_{k: (k,j) \in S_{OG}} \langle {\Phi}_{(k,j)}^\top, \tilde \beta^* -\tilde \beta^t \rangle^2. $$
	Then, we get the upper bound as
	\begin{equation}\label{GroupXifailHsuccess}
		\begin{aligned}
			&  \sum_{(i,j) \in S_{OG}} \tilde \Xi_{ij}^2\mathbf1\left\{|\tilde \Xi_{ij}|\ge \mu', 
            \sum_{k\in [d]} \tilde \Xi_{kj}^2 \mathbf1 (|\tilde \Xi_{kj}|\ge \mu' )< s_0\mu'^2,~  \mathcal T_{\mu,s_0} \big(\tilde H^{t+1}\big)_{ij}\ne 0 \right\} \\
			=&\sum_{ j \in G_{OG}}~ \sum_{i:(i,j) \in S_{OG}} \tilde \Xi_{ij}^2 \mathbf1\Big\{|\tilde \Xi_{ij}|\ge \mu'\Big\}\\
			&\qquad\times  \mathbf1\left\{\sum_{k\in [d]} \tilde \Xi_{kj}^2 \mathbf1 (|\tilde \Xi_{kj}|\ge \mu')< s_0\mu'^2,~  \mathcal T_{\mu,s_0} \big(\tilde H^{t+1}\big)_{ij}\ne 0 \right\} \\
			\le & \sum_{j\in G_{OG}} s_0\mu'^2 \mathbf1 \left\{s_0 \mu'^2 \le  \frac{(1-\delta)^2}{6\delta^2} \sum_{k: (k,j) \in S_{OG}} \langle {\Phi}_{(k,j)}^\top, \tilde \beta^* -\tilde \beta^t \rangle^2 \right\} \\
			\le & \frac{(1-\delta)^2}{6\delta^2} \sum_{(i,j) \in S_{OG}} \langle {\Phi}_{(i,j)}^\top, \tilde \beta^*-\tilde\beta^t \rangle^2
            ~ \le ~ \frac{(1-\delta)^2}6 \left\| \tilde \beta^t - \tilde \beta^* \right\|_2^2 .
		\end{aligned}
	\end{equation}
	Therefore, we complete the proof of Lemma \ref{OGChi2:3rd}.
\end{proof}
}

\subsection{Lemma \ref{exactsupporterror} and its proof}
Finally, we analyze one essential error term associated with the true support set $S^*$.

\begin{lemma}\label{exactsupporterror}
{Under conditions of Proposition \ref{T8}, recall that the element-wise beta-min condition and the group-wise beta-min condition 
		\begin{equation*}
			\begin{aligned}
				&\min_{(i,j)\in S^*}|\beta^*_{ij}| \ge \left( 2 + \frac{\sqrt6 \delta}{1-\delta} \right) \mu,\\
				&\min_{j \in G^*}\|\beta^*_{G_j}\|_2 \ge 
				 \left( 2 + \frac{\sqrt6 \delta}{1-\delta} \right) \sqrt{s_0} \mu.
			\end{aligned}
		\end{equation*}
	}
	Then, we have
	\begin{align*} 
		&P\left( \sum_{(i,j) \in S^*} \left( \tilde H_{ij}^{t+1}\right)^2 \mathbf1\left\{ (i,j) \notin \tilde S^{t+1}\right\}
		< \frac{(1-\delta)^2}{3} \left\| \tilde\beta^t - \tilde \beta^* \right\|_2^2   \right)\\
		\ge& 1-O\left( e^{-\frac13 \left[ \Delta(1,s_0)+ \log(ss_0) \right] } \right).
	\end{align*}
\end{lemma}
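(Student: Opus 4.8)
The plan is to classify each true-support coordinate $(i,j)\in S^*$ that fails to be selected (i.e. $(i,j)\notin\tilde S^{t+1}$) according to why the double-sparse operator $\mathcal T_{\mu,s_0}$ discards it, and in each case to dominate $(\tilde H_{ij}^{t+1})^2$ by the local perturbation $\langle\Phi_{(i,j)}^\top,\tilde\beta^*-\tilde\beta^t\rangle^2$, so that the final bound follows from the DSRIP estimate \eqref{tech}. Throughout I would work on the event in \eqref{eq:exactprob}, which already carries the stated probability $1-O(e^{-\frac13[\Delta(1,s_0)+\log(ss_0)]})$; the key simplification is that $\tilde\Xi_{ij}=0$ for $(i,j)\in S^*$, so $\tilde H_{ij}^{t+1}=\tilde\beta^*_{ij}+\langle\Phi_{(i,j)}^\top,\tilde\beta^*-\tilde\beta^t\rangle$ and the only randomness entering this lemma is the oracle deviation, controlled by $\max_{(i,j)\in S^*}|\tilde\beta^*_{ij}-\beta^*_{ij}|\le\mu/3$.

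First case (discarded element-wise). If $(i,j)\in S^*$ has $|\tilde H_{ij}^{t+1}|<\mu$, the element-wise beta-min condition together with the oracle deviation bound gives $|\tilde\beta^*_{ij}|\ge(2+\frac{\sqrt6\delta}{1-\delta})\mu-\mu/3$, so by the triangle inequality $|\langle\Phi_{(i,j)}^\top,\tilde\beta^*-\tilde\beta^t\rangle|\ge|\tilde\beta^*_{ij}|-|\tilde H_{ij}^{t+1}|>(\tfrac23+\frac{\sqrt6\delta}{1-\delta})\mu\ge\frac{\sqrt6\delta}{1-\delta}\mu$. Since $(\tilde H_{ij}^{t+1})^2<\mu^2$, this yields the pointwise comparison $(\tilde H_{ij}^{t+1})^2\le\frac{(1-\delta)^2}{6\delta^2}\langle\Phi_{(i,j)}^\top,\tilde\beta^*-\tilde\beta^t\rangle^2$. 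This step mirrors Lemma \ref{OGChi2:2nd}, with the role of the noise threshold $\mu'$ replaced by the beta-min gap.

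Second case (discarded by group thresholding). Suppose instead that $(i,j)\in S^*$ with $|\tilde H_{ij}^{t+1}|\ge\mu$ is discarded because its true-support group $G_j$, $j\in G^*$, is dropped, i.e. $\sum_{k\in T_j}(\tilde H_{kj}^{t+1})^2<s_0\mu^2$ with $T_j=\{k:|\tilde H_{kj}^{t+1}|\ge\mu\}$. Writing $P_j^2:=\sum_{i:(i,j)\in S^*}\langle\Phi_{(i,j)}^\top,\tilde\beta^*-\tilde\beta^t\rangle^2$, I would reproduce the reverse-triangle scaling of \eqref{OGgroupscaled} in Lemma \ref{OGChi2:3rd}: the group-wise beta-min $\|\beta^*_{G_j}\|_2\ge(2+\frac{\sqrt6\delta}{1-\delta})\sqrt{s_0}\mu$ forces $\|\tilde\beta^*_{G_j}\|_2$ to be large, while $\|\tilde H^{t+1}_{S^*\cap G_j}\|_2\ge\|\tilde\beta^*_{G_j}\|_2-P_j$ and the splitting of $\|\tilde H^{t+1}_{S^*\cap G_j}\|_2^2$ into its above-threshold part ($<s_0\mu^2$) and its below-threshold part (bounded by the first case, hence by $\frac{(1-\delta)^2}{6\delta^2}P_j^2$) give $\|\tilde H^{t+1}_{S^*\cap G_j}\|_2<\sqrt{s_0}\mu+\frac{1-\delta}{\sqrt6\delta}P_j$. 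Combining these two inequalities forces $s_0\mu^2<\frac{(1-\delta)^2}{6\delta^2}P_j^2$, so the above-threshold support mass of a dropped group, which is at most $s_0\mu^2$, is itself dominated by the group perturbation $P_j^2$.

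Finally I would aggregate. Let $G_{\mathrm{drop}}\subseteq G^*$ denote the dropped support groups. The below-threshold contributions over all groups sum, via the first-case comparison and \eqref{tech}, to at most $\frac{(1-\delta)^2}{6\delta^2}\cdot\delta^2\|\tilde\beta^t-\tilde\beta^*\|_2^2=\frac{(1-\delta)^2}6\|\tilde\beta^t-\tilde\beta^*\|_2^2$; the above-threshold contributions from dropped groups sum to at most $\sum_{j\in G_{\mathrm{drop}}}s_0\mu^2<\frac{(1-\delta)^2}{6\delta^2}\sum_{j\in G_{\mathrm{drop}}}P_j^2\le\frac{(1-\delta)^2}6\|\tilde\beta^t-\tilde\beta^*\|_2^2$ by \eqref{tech} again. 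Adding the two halves gives the claimed bound $\frac{(1-\delta)^2}3\|\tilde\beta^t-\tilde\beta^*\|_2^2$. The main obstacle is the second case: making the reverse-triangle scaling quantitative enough that the constant $\frac{(1-\delta)^2}{6\delta^2}$ survives, which is precisely why the beta-min constants are chosen as $2+\frac{\sqrt6\delta}{1-\delta}$. The subtle point here is to prevent the oracle-estimator deviation $\|\tilde\beta^*_{G_j}-\beta^*_{G_j}\|_2$ on each support group from eroding this constant, which I would control through the element-wise deviation bound $\mu/3$ supplied by \eqref{eq:exactprob}.
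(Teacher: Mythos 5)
Your overall architecture matches the paper's proof: work on the event \eqref{eq:exactprob}, use $\tilde\Xi_{S^*}=\mathbf 0$, split the non-selected support coordinates into (element-discarded) and (above threshold but group-dropped), dominate each contribution by $\langle\Phi_{(i,j)}^\top,\tilde\beta^*-\tilde\beta^t\rangle^2$, and aggregate with \eqref{tech}. Your first case is exactly the paper's \eqref{eq:exactsupport1} and is correct. However, your second case contains two genuine gaps.

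First, the claimed implication does not follow from the two inequalities you display. Write $r=\frac{\sqrt6\delta}{1-\delta}$ and $P_j^2=\sum_{k:(k,j)\in S^*}\langle\Phi_{(k,j)}^\top,\tilde\beta^*-\tilde\beta^t\rangle^2$. Combining $\|\tilde H^{t+1}_{S^*\cap G_j}\|_2\ge\|\tilde\beta^*_{G_j}\|_2-P_j$ with $\|\tilde H^{t+1}_{S^*\cap G_j}\|_2<\sqrt{s_0}\mu+\tfrac1r P_j$, and using $\|\tilde\beta^*_{G_j}\|_2\ge(\tfrac53+r)\sqrt{s_0}\mu$, yields only $P_j>\frac{r(\frac23+r)}{1+r}\sqrt{s_0}\mu$, which falls short of the needed $P_j>r\sqrt{s_0}\mu$ by the factor $\frac{\frac23+r}{1+r}<1$; for small $\delta$ this inflates the per-group bound by roughly $9/4$ and the final constant to about $\frac{13}{24}(1-\delta)^2>\frac{(1-\delta)^2}{3}$, so the lemma as stated (whose constant is consumed downstream in the contraction \eqref{eq: 2.2 l21}) is not proved. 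The sub-additive step $\sqrt{a+b}\le\sqrt a+\sqrt b$ is where the margin is lost: your recycled case-1 bound for the below-threshold mass can be made to work, but only if you combine the two constraints at the level of squares (the map $x\mapsto(\frac53+r-x)^2-1-\frac{x^2}{r^2}$ is concave and positive at $x=0$ and $x=r$, which rules out $P_j\le r\sqrt{s_0}\mu$). The paper sidesteps this entirely: it bounds the below-threshold mass crudely by $s_j\mu^2$, folds it into the dropping condition to get $\|\tilde H^{t+1}_{S^*\cap G_j}\|_2^2<(s_j+s_0)\mu^2\le 2(s_j\vee s_0)\mu^2$, and relies on the numeric margin $2-\frac13-\sqrt2>0$ in the beta-min constant (see \eqref{eq:exactsupport2}).

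Second, you never handle the case $s_j:=|S^*\cap G_j|>s_0$, which is permitted in $\Theta(s,s_0)$ (a single group may contain up to $d\wedge ss_0$ active entries). Your lower bound $\|\tilde\beta^*_{G_j}\|_2\ge(2+r)\sqrt{s_0}\mu-\sqrt{s_j}\,\mu/3$ uses only the group-wise beta-min, while the oracle deviation scales as $\sqrt{s_j}$; once $s_j\gtrsim 36\,s_0$ the bound is vacuous and your case-2 argument gives nothing. The paper's proof addresses precisely this point by invoking \emph{both} beta-min conditions to obtain $\|\beta^*_{G_j}\|_2\ge(2+r)\sqrt{s_j\vee s_0}\,\mu$, so that the deviation ($\le\sqrt{s_j\vee s_0}\,\mu/3$) and the threshold term ($\sqrt{2(s_j\vee s_0)}\,\mu$) are both compared against a group norm of matching scale. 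You need the same move before your case-2 comparison can close.
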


\begin{proof}[Proof of Lemma \ref{exactsupporterror}]
	{
		Recall $s_j = |{G_j} \cap S^*|$ for all $ j \in G^*$.
		Then we have
		\begin{equation}\label{eq:exactsupp}
			\begin{aligned}
				& \sum_{ (i,j) \in S^* } \left( \tilde H_{ij}^{t+1}\right)^2 
				\mathbf1\left( (i,j) \notin \tilde S^{t+1}\right) \\
				\le &  \sum_{(i,j) \in S^*} \left( \tilde H_{ij}^{t+1}\right)^2 
				\mathbf1\left( |\tilde H_{ij}^{t+1}| < \mu \right)  \\
				&~+  \sum_{(i,j) \in S^*} \left( \tilde H_{ij}^{t+1}\right)^2 \cdot
				\mathbf1\left( |\tilde H_{ij}^{t+1}| \ge \mu, 
				~\sum_{k\in [d]}\left( \tilde H_{kj}^{t+1}\right)^2  \mathbf1 ( |\tilde H_{kj}^{t+1}| \ge \mu) < s_0 \mu^2 \right) \\
				\le &  \sum_{(i,j) \in S^*} \mu^2 \cdot \mathbf1\left( |\tilde H_{ij}^{t+1}| <\mu\right) \\
				&~+  \sum_{j \in G^*}  s_0 \mu^2 \cdot 
				\mathbf1\left( \sum_{k: (k,j) \in S^*}\left( \tilde H_{kj}^{t+1}\right)^2 \mathbf1 (|\tilde H_{kj}^{t+1}| \ge \mu)< s_0 \mu^2 \right) \\
				\le &  \sum_{(i,j) \in S^*} \mu^2 \cdot
				\mathbf1\left( |\tilde H_{ij}^{t+1}| < \mu \right) \\
				&~+ \sum_{j \in G^*} s_0 \mu^2 \cdot 
				\mathbf1\left( \sum_{k: (k,j) \in S^*}\left( \tilde H_{kj}^{t+1}\right)^2 < (s_j+s_0) \mu^2 \right).
			\end{aligned}
		\end{equation}
	}
	{
		Note that for all $ (i,j) \in S^*$, we have $\tilde H_{ij}^{t+1} = \tilde \beta^* + \langle{\Phi}_{(i,j)}^\top,\tilde\beta^*-\tilde\beta^{t}\rangle$ holds (since $\tilde \Xi_{ij} =0$ for all $ (i,j) \in S^*$).
		Therefore, for the first term in \eqref{eq:exactsupp}, 
		we have 
		\begin{equation}\label{eq:exactsupport1}
			\begin{aligned}
				\mathbf1\left( |\tilde H_{ij}^{t+1}| < \mu \right) 
				\le & \mathbf 1\Big( |\tilde \beta_{ij}^*| - | \langle{\Phi}_{(i,j)}^\top,\tilde\beta^*-\tilde\beta^{t}\rangle | < \mu \Big) \\ 
				\overset{(i)}{\le} & \mathbf 1\left( |\beta_{ij}^*|-\frac\mu3-|\langle{\Phi}_{(i,j)}^\top, \tilde\beta^*-\tilde\beta^{t}\rangle| < \mu \right)\\
				{\le} & \mathbf 1\left( \mu  < \frac{1-\delta}{\sqrt6 \delta} |\langle{\Phi}_{(i,j)}^\top, \tilde\beta^*-\tilde\beta^{t}\rangle| \right), 
			\end{aligned}
		\end{equation}
		where inequality (i) follows from the third inequality in \eqref{eq:exactprob}, and the last inequality follows from the element-wise beta-min condition $\min_{(i,j) \in S^*} |\beta_{ij}^*| \ge \left(2 + \frac{\sqrt6 \delta}{1-\delta} \right)\mu$.
	}
	
	{
		Additionally, for the second term in \eqref{eq:exactsupp}, by both element-wise and group-wise beta-min conditions, we conclude that $\| \beta_{G_j}^*\|_2 \ge \left( 2 + \frac{\sqrt6 \delta}{1-\delta} \right) \sqrt{s_j \vee s_0} \mu$. 
		Then, we have 
		\begin{equation}\label{eq:exactsupport2}
			\begin{aligned}
				&\mathbf1\left( \sum_{k: (k,j)\in S^*}\left( \tilde H_{kj}^{t+1}\right)^2 < (s_j+s_0) \mu^2 \right) \\
				\le & \mathbf1\left(  \sqrt{\sum_{k: (k,j)\in S^*} \left( \tilde \beta_{kj}^* \right)^2 }- \sqrt{\sum_{k: (k,j)\in S^*}\langle{\Phi}_{(k,j)}^\top,\tilde\beta^*-\tilde\beta^{t}\rangle^2} 
				< \sqrt{s_j+s_0} \mu \right) \\
				\le & \mathbf1\Bigg( \sqrt{\sum_{k: (k,j)\in S^*} \left( \beta_{kj}^* \right)^2 }-
				 \sqrt{ \sum_{k: (k,j)\in S^*} \left( \tilde \beta^*_{kj} - \beta^*_{kj}  \right)^2 }
				 - \sqrt{2 (s_j \vee s_0)} \mu\\
				 &\qquad\qquad  < \sqrt{\sum_{k: (k,j)\in S^*} \langle{\Phi}_{(k,j)}^\top,\tilde\beta^*-\tilde\beta^{t}\rangle^2} \Bigg)\\
				\le & \mathbf1\left( \sum_{k: (k,j)\in S^*} \langle{\Phi}_{(k,j)}^\top,\tilde\beta^*-\tilde\beta^{t}\rangle^2 
				>\frac{6 \delta^2}{(1-\delta)^2} (s_j \vee s_0 ) \mu^2 > \frac{6 \delta^2}{(1-\delta)^2} s_0 \mu^2  \right),
			\end{aligned}
		\end{equation}
		where the last inequality follows from the third inequality in \eqref{eq:exactprob}. 
		
		By applying \eqref{eq:exactsupport1} and \eqref{eq:exactsupport2} into \eqref{eq:exactsupp}, we derive that 
		\begin{align*}
			& \sum_{ (i,j) \in S^* } \left( \tilde H_{ij}^{t+1}\right)^2 
			\mathbf1\left( (i,j) \notin \tilde S^{t+1}\right) \\
			\le &  \sum_{(i,j) \in S^*} \mu^2 \cdot \mathbf 1\left( \mu  < \frac{1-\delta}{\sqrt6 \delta} |\langle{\Phi}_{(i,j)}^\top, \tilde\beta^*-\tilde\beta^{t}\rangle| \right)\\ 
			&+ \sum_{j \in G^*} s_0 \mu^2 \cdot 
			\mathbf1\left( \sum_{k: (k,j)\in S^*} \langle{\Phi}_{(k,j)}^\top,\tilde\beta^*-\tilde\beta^{t}\rangle^2 
			> \frac{6 \delta^2}{(1-\delta)^2} s_0 \mu^2  \right)\\
			\le &  \frac{(1-\delta)^2}{3 \delta^2} \sum_{(i,j) \in S^*}  \langle{\Phi}_{(i,j)}^\top, \tilde\beta^*-\tilde\beta^{t}\rangle^2 \\ 
			\le &  \frac{(1-\delta)^2}{3}  \left\|\tilde\beta^*-\tilde\beta^{t} \right\|_2^2,
		\end{align*}
		which completes the proof of Lemma \ref{exactsupporterror}.
	}
\end{proof}

\section{Auxiliary Lemmas for the minimax lower bounds}\label{appD}

This appendix provides some lemmas used in Appendix \ref{lowerproof}.

\begin{proof}[Proof of Lemma \ref{lemma:minimaxori}]
To facilitate the calculation, we use the double index $(i,j)$ to locate the $i$-th variable in the $j$-th group $G_j$.
Recall $\pi_{\Theta}(A) = \frac{\pi(A)}{\pi({\Theta})}$. 
Then, based on the property of expectation, we have
\begin{equation}\label{1siineq}
 \begin{aligned}
\inf_{\hat \beta} \sup_{\beta^* \in  {\Theta}} 
    \mathbf{E}_{Y\sim P_{\beta^*}} \|\hat \beta - \beta^*\|_2^2 
\ge & \inf_{\hat \beta} \mathbf E_{\beta^* \sim \pi_{\Theta} } 
    \mathbf{E}_{Y\sim P_{\beta^*} } \|\hat \beta - \beta^*\|_2^2 \\
\ge & \sum_{i=1}^d\sum_{j=1}^m \inf_{\hat \beta_{ij}} \mathbf E_{\beta^* \sim \pi_{\Theta} } 
    \mathbf{E}_{Y\sim P_{\beta^*} } \left( \hat \beta_{ij}- \beta_{ij}^{*{\Theta} }\right)^2 \\
\overset{(i)}{=}& \sum_{i,j} \inf_{\hat \beta_{ij} } 
    \mathbf E_{Y} \Bigg\{ \underset{\beta^{*{\Theta}}|Y}{\mathbf{E}}
    \left( \left( \hat \beta_{ij} - \beta_{ij}^{*{\Theta}} \right)^2 \Big| Y \right)\Bigg\}\\
\overset{(ii)}{ = }& \sum_{i,j} \mathbf E_{Y } \Bigg\{
    \underset{\beta^{*{\Theta}} |Y}{\mathbf{E}} \left( \left( 
    B_{ij}^{\Theta} - \beta_{ij}^{*{\Theta}} \right)^2 \Big| Y \right)\Bigg\}\\
=& \underset{\beta^* \sim \pi_{\Theta}}{\mathbf E}~\underset{Y \sim P_{\beta^*} }{\mathbf E}  
    \left( \sum_{i,j} \left(B_{ij}^{\Theta} - \beta_{ij}^{*{\Theta}} \right)^2 \right),
\end{aligned}
\end{equation}
where equality $(i)$ uses $\underset{\beta^{*{\Theta}}|Y}{\mathbf{E}}$ to indicate that the expectation is with respect to $\beta^{*{\Theta}}$ conditioned on $Y$, that is, the conditional distribution $\frac{ P(Y|\beta^*) \mathrm d \pi_{\Theta}(\beta^*)  }{ \int P(Y|\beta) \mathrm d \pi_{\Theta}(\beta^*) }$. 
And in inequality (ii) we define $B_{ij}^{\Theta} := \underset{\beta^{{\Theta}}|Y}{\mathbf{E} } (\beta_{ij}^{{\Theta}}|Y)$, where recall that the distribution of $\beta_{ij}^{{\Theta}}$ is independently identical with that of $\beta_{ij}^{*{\Theta}}$. 
According to Theorem 1.1 and Corollary 1.2 in Chapter 4 in \cite{LE06}, $B_{ij}^{\Theta}$ achieves the infimum.
Additionally, we have 
$(B_{ij}^{\Theta})^2 \le 
\underset{\beta^{{\Theta}}|Y}{\mathbf{E}} \left( \left(\beta_{ij}^{{\Theta}}\right)^2\Big| Y \right)$.

Next, by taking the infimum over all possible estimator $\hat T(Y,X)$, we obtain
\begin{equation}\label{3partori}
\begin{aligned}
&\inf_{\hat T } \underset{\beta^* \sim \pi}{\mathbf E}~\underset{Y \sim P_{\beta^*} }{\mathbf E} \|\hat T(Y,X) - \beta^*\|_2^2 \\
\le & \underset{\beta^* \sim \pi}{\mathbf E}~\underset{Y \sim P_{\beta^*} }{\mathbf E} \sum_{i,j} \left(B_{ij}^{\Theta}  - \beta_{ij}^* \right)^2 \\
\le & \underset{\beta^* \sim \pi}{\mathbf E}~\underset{Y \sim P_{\beta^*} }{\mathbf E} \left( \sum_{i,j} \left(B_{ij}^{\Theta} - \beta_{ij}^* \right)^2 \mathbf1(\beta^* \in {\Theta}) \right)\\
   & + \underset{\beta^* \sim \pi}{\mathbf E}~\underset{Y \sim P_{\beta^*} }{\mathbf E} \left( \sum_{i,j}\left(B_{ij}^{\Theta} - \beta_{ij}^* \right)^2 \mathbf1(\beta^* \notin {\Theta}) \right) \\
\le& \underset{\beta^* \sim \pi_{\Theta}}{\mathbf E}~\underset{Y \sim P_{\beta^* } }{\mathbf E}  
    \left( \sum_{i,j} \left(B_{ij}^{\Theta} - \beta_{ij}^{*{\Theta}} \right)^2 \right)\\
    &+ 2\underset{\beta^* \sim \pi}{\mathbf E}~\underset{Y \sim P_{\beta^*} }{\mathbf E} 
    \left\{  \sum_{i,j} \left( \underset{\beta^{ {\Theta}}|Y}{\mathbf E} 
    \left( \left(\beta_{ij}^{ {\Theta}}\right)^2 \big| Y \right)  
    + (\beta_{ij}^*)^2  \right) \mathbf1(\beta^* \notin {\Theta}) \right\}.
\end{aligned}
\end{equation}

Combining \eqref{3partori} and \eqref{1siineq} together, we get the lower bound of the minimax risk, that is,
\begin{equation*} 
\begin{aligned}
&\inf_{\hat \beta} \sup_{\beta^* \in  {\Theta}} 
    \mathbf{E}_{Y\sim P_{\beta^*} } \|\hat \beta - \beta^* \|_2^2 \\
\ge& \mathbf E_{\beta^* \sim \pi_{\Theta}} \mathbf E_{Y \sim P_{\beta^* } } 
    \left( \sum_{i,j} \left(B_{ij}^{\Theta} - \beta_{ij}^{\Theta} \right)^2 \right)\\
\ge &\inf_{\hat T } \left\{ \underset{\beta^* \sim \pi}{\mathbf E}~ \underset{Y \sim P_{\beta^*} }{\mathbf E} \|\hat T(Y,X) - \beta^* \|_2^2 \right\} \\
    & -2 \underset{\beta^* \sim \pi}{\mathbf E} ~ \underset{Y \sim P_{\beta^*} }{\mathbf E}
    \Bigg\{ \bigg( \underset{\beta^{\Theta} | Y}{\mathbf{E}} \Big( \|\beta^{{\Theta}} \|_2^2\big| Y \Big) + \| \beta^* \|_2^2  \bigg) 
    \mathbf1(\beta^* \notin {\Theta}) \Bigg\},\\
\end{aligned}
\end{equation*}
which completes the proof of Lemma \ref{lemma:minimaxori}.
\end{proof}

The next lemma is a concentration inequality of the binomial distribution in Appendix D.4 in \cite{mohri}.
\begin{lemma}[Bernstein's inequality]\label{berncen}
    Suppose $u \sim Bin(n,p),~ p \in (0,1)$, then, for every $\lambda >0$, 
    \begin{equation*}
       P \left(\frac un - p \ge  \lambda \right) \le \exp \left( -\frac{n \lambda^2}{2p(1-p) + \frac{2\lambda}{3} } \right).
    \end{equation*}
\end{lemma}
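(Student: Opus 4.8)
The plan is to establish this standard concentration bound by the classical Chernoff--Bernstein argument for a sum of independent Bernoulli variables; indeed the statement is precisely the form proved in Appendix D.4 of \cite{mohri}, so one could simply cite it, but a self-contained argument proceeds as follows. First I would write $u = \sum_{k=1}^n X_k$ with $X_1,\dots,X_n$ i.i.d.\ $\mathrm{Bernoulli}(p)$ and center them by setting $Y_k := X_k - p$, so that $\mathbf E Y_k = 0$, $\mathrm{Var}(Y_k) = p(1-p)$, and $|Y_k| \le \max\{p,1-p\} \le 1$. Since $\{u/n - p \ge \lambda\}$ is the event $\{\sum_k Y_k \ge n\lambda\}$, the exponential Markov inequality gives, for every $t>0$,
\begin{equation*}
P\!\left(\frac{u}{n}-p \ge \lambda\right) \le e^{-tn\lambda}\prod_{k=1}^n \mathbf E\, e^{tY_k}.
\end{equation*}

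The crux is bounding the moment generating function of each centered term. Expanding $e^{tY_k}$, discarding the first-order term because $\mathbf E Y_k = 0$, and using $|\mathbf E Y_k^{\,j}| \le \mathbf E Y_k^2$ for $j\ge 2$ (which follows from $|Y_k|\le 1$), I would then apply the elementary bound $j! \ge 2\cdot 3^{\,j-2}$ to sum a geometric series: for every $0<t<3$,
\begin{equation*}
\mathbf E\, e^{tY_k} \le 1 + p(1-p)\sum_{j\ge 2}\frac{t^j}{2\cdot 3^{\,j-2}} = 1 + \frac{p(1-p)\,t^2}{2(1-t/3)} \le \exp\!\left(\frac{p(1-p)\,t^2}{2(1-t/3)}\right).
\end{equation*}
Inserting this into the product yields the exponent $-tn\lambda + \tfrac{n\,p(1-p)\,t^2}{2(1-t/3)}$.

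Finally I would optimize over $t$ not by differentiation but by inserting the explicit choice $t = \lambda/\bigl(p(1-p)+\lambda/3\bigr)$, which lies in $(0,3)$ for $p\in(0,1)$; a short computation gives $1-t/3 = p(1-p)/\bigl(p(1-p)+\lambda/3\bigr)$, so that both terms of the exponent collapse to $-n\lambda^2/\bigl(2p(1-p)+\tfrac23\lambda\bigr)$, which is exactly the claimed bound. I expect the only real subtlety to be the moment-generating-function estimate: one must use the sharp factorial bound $j!\ge 2\cdot 3^{\,j-2}$ (rather than a looser one) to recover precisely the $\tfrac23\lambda$ term in the denominator, and one must check that the optimizing $t$ stays below the convergence radius $3$. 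Everything else is routine bookkeeping.
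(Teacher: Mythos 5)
Your proposal is correct, but it differs from the paper in one basic respect: the paper never proves this lemma at all. It simply cites it as a known concentration inequality from Appendix D.4 of the Mohri et al.\ reference, exactly the option you mention in your opening sentence. Your self-contained Chernoff--Bernstein derivation checks out in every step: the centering gives $|Y_k|\le\max\{p,1-p\}\le 1$ and hence $|\mathbf E Y_k^{\,j}|\le \mathbf E Y_k^2=p(1-p)$ for $j\ge 2$; the factorial bound $j!\ge 2\cdot 3^{\,j-2}$ holds for all $j\ge 2$ (equality at $j=2,3$, induction beyond), which yields
\begin{equation*}
\mathbf E\, e^{tY_k}\le 1+\frac{p(1-p)t^2}{2(1-t/3)}\le \exp\!\left(\frac{p(1-p)t^2}{2(1-t/3)}\right),\qquad 0<t<3;
\end{equation*}
and with $t=\lambda/\bigl(p(1-p)+\lambda/3\bigr)$, which is indeed in $(0,3)$ for $p\in(0,1)$, one gets $1-t/3=p(1-p)/\bigl(p(1-p)+\lambda/3\bigr)$, so the exponent becomes $-\tfrac{n\lambda^2}{p(1-p)+\lambda/3}+\tfrac{n\lambda^2}{2\bigl(p(1-p)+\lambda/3\bigr)}=-\tfrac{n\lambda^2}{2p(1-p)+\tfrac{2}{3}\lambda}$, which is exactly the stated bound. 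What your route buys over the paper's citation is a verification of the precise constants --- in particular that the denominator really is $2p(1-p)+\tfrac{2}{3}\lambda$ and not a looser variant, which matters since this lemma is invoked quantitatively in the lower-bound proofs (e.g.\ in bounding the prior mass escaping $\Theta_{e,1}$ and $\Theta_{e,2}$); the citation buys brevity for what is, after all, a textbook result.
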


The following lemma demonstrates the monotonicity of a function, which will be utilized in the proof of Theorem \ref{allselectorD1D2}.
\begin{lemma}\label{mdPsi}
Recall that 
\begin{align*}
  t(d,s_0,a,\sigma ) &:= \frac{a\sqrt n}{2} + \frac{\sigma^2}{a\sqrt n} \log \frac{d-s_0}{s_0},\\
   \psi(d,s_0,a,\sigma) & :=(d-s_0){\Phi}\left(-\frac{t(d,s_0,a,\sigma )}{\sigma}\right) + s_0 {\Phi}\left(-\frac{a\sqrt n-t(d,s_0,a,\sigma )}{\sigma}\right).
\end{align*}
Then for the fixed $d,~s_0, ~a,~\sigma$, we have
\begin{equation*}
\frac{\psi(d,s_0',a,\sigma)}{s_0'} \ge \frac{\psi(d,s_0,a,\sigma)}{s_0}, \text{ for every }  s_0' \in (0, s_0].
\end{equation*}
\end{lemma}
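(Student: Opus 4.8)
The plan is to treat the integer $s_0'$ as a continuous variable $w\in(0,d)$ (legitimate since $s_0<d$, and $a,\sigma,n>0$), and to show that $g(w):=\psi(d,w,a,\sigma)$ satisfies $\frac{d}{dw}\bigl(g(w)/w\bigr)\le 0$; monotonicity then yields $g(s_0')/s_0'\ge g(s_0)/s_0$ for every $s_0'\le s_0$. Write $L(w):=\log\frac{d-w}{w}$, so that by the definition of $t(w):=t(d,w,a,\sigma)$ one has $\tfrac{t(w)}{\sigma}=\tfrac{a\sqrt n}{2\sigma}+\tfrac{\sigma}{a\sqrt n}L(w)$ and $\tfrac{a\sqrt n-t(w)}{\sigma}=\tfrac{a\sqrt n}{2\sigma}-\tfrac{\sigma}{a\sqrt n}L(w)$, whence $g(w)=(d-w)\Phi\!\bigl(-t(w)/\sigma\bigr)+w\,\Phi\!\bigl(-(a\sqrt n-t(w))/\sigma\bigr)$.

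The key observation, which is where all the work sits, is that $t(w)$ is exactly the Bayes-optimal threshold for the one-coordinate detection problem behind \eqref{D1T}: it is the stationary point of $t\mapsto(d-w)\Phi(-t/\sigma)+w\,\Phi(-(a\sqrt n-t)/\sigma)$. Concretely, differentiating in $t$ and setting the result to zero gives the threshold identity
\[
(d-w)\,\phi\!\left(\tfrac{t(w)}{\sigma}\right)=w\,\phi\!\left(\tfrac{a\sqrt n-t(w)}{\sigma}\right),
\]
where $\phi$ is the standard normal density. I would verify this identity directly from the explicit formula for $t(w)$ (the same computation that produced the likelihood-ratio threshold in the proof of \eqref{allselectorD1}), using that $\phi$ is even so that $\phi\bigl((t-a\sqrt n)/\sigma\bigr)=\phi\bigl((a\sqrt n-t)/\sigma\bigr)$.

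Next I would differentiate $g$. Since $L'(w)=-\tfrac{d}{w(d-w)}$, the chain rule gives
\[
g'(w)=\Bigl[\Phi\!\bigl(-\tfrac{a\sqrt n-t}{\sigma}\bigr)-\Phi\!\bigl(-\tfrac{t}{\sigma}\bigr)\Bigr]
+\tfrac{\sigma}{a\sqrt n}L'(w)\Bigl[w\,\phi\!\bigl(\tfrac{a\sqrt n-t}{\sigma}\bigr)-(d-w)\,\phi\!\bigl(\tfrac{t}{\sigma}\bigr)\Bigr].
\]
The bracket multiplying $L'(w)$ vanishes identically by the threshold identity (this is the envelope effect: the implicit dependence of $\psi$ on $w$ through the optimal threshold $t(w)$ contributes nothing), leaving $g'(w)=\Phi\!\bigl(-\tfrac{a\sqrt n-t}{\sigma}\bigr)-\Phi\!\bigl(-\tfrac{t}{\sigma}\bigr)$. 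A short cancellation then gives the clean formula $w\,g'(w)-g(w)=-d\,\Phi\!\bigl(-t(w)/\sigma\bigr)\le 0$, so that $\frac{d}{dw}\bigl(g(w)/w\bigr)=\bigl(w g'(w)-g(w)\bigr)/w^2\le 0$; hence $g(w)/w$ is nonincreasing on $(0,d)$ and the lemma follows.

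The only genuine obstacle is spotting the cancellation, i.e. recognizing that $t(w)$ is the first-order-stationary threshold so that the $L'(w)$ term drops out; once that is in place, the computation of $w g'(w)-g(w)$ is routine algebra. I would therefore front-load the threshold identity and invoke it twice (once to kill the $L'$ term, and implicitly it is what makes $g$ the Bayes risk), keeping the differentiation and the final sign check brief.
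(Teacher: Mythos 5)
Your proposal is correct and takes essentially the same route as the paper: the paper also treats $s_0'$ as a continuous variable, differentiates $A(r)=\psi(d,r,a,\sigma)/r$, uses the identity $\frac{d-r}{r}\,\varphi\!\left(-t/\sigma\right)=\varphi\!\left(-(a\sqrt n-t)/\sigma\right)$ (your threshold identity, verified by the same explicit computation with $t(r)$) to cancel the term involving $\partial t/\partial r$, and lands on the same expression $A'(r)=-\frac{d}{r^2}\Phi\!\left(-t/\sigma\right)\le 0$. The only difference is organizational --- you differentiate $g(w)=\psi$ first and then apply the quotient rule via $wg'(w)-g(w)$, while the paper differentiates the ratio directly --- which is immaterial.
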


\begin{proof}[Proof of Lemma \ref{mdPsi}]
For ease of display, for the fixed $d,~a,~\sigma$ and for every $r\in (0,d)$, we define
\begin{equation*}
A(r) :=  \frac{\psi(d, r, a,\sigma)}{r}
= \frac{d-r}{r} {\Phi}\left( -\frac{ t(d, r, a, \sigma) }\sigma \right) 
    + {\Phi}\left( -\frac{ a\sqrt{n} - t(d, r, a, \sigma) }\sigma \right).
\end{equation*}
Thus we have 
\begin{equation*}
A'(r) = -\frac{d}{r^2} {\Phi}\left( -\frac{ t }\sigma \right)
  +\frac1\sigma \frac{\partial t }{\partial r} \cdot
   \left\{ -\frac{d-r}{r} \varphi\left( -\frac{ t }\sigma \right)
    + \varphi\left( -\frac{ a\sqrt{n} - t }\sigma \right)  \right\},
\end{equation*}
where $\varphi(\cdot)$ is the probability distribution function of the standard Gaussian distribution.

Note that
\begin{equation*}\label{eq:equivalence}
\begin{aligned}
&\frac{d-r}{r} \varphi\left( -\frac{ t }\sigma \right)\\
=& \frac{1}{\sqrt{2\pi}}\frac{d-r}r \exp\left(-\frac1{2\sigma^2} \left[\frac{a^2 n}4
   + \frac{\sigma^4}{a^2 n} \log^2\frac{d-r}{r}+ \sigma^2 \log\frac{d-r}{r}\right] \right)\\
=& \frac{1}{\sqrt{2\pi}} \exp\left( -\frac{ 1 }{2 \sigma^2} \left[ \frac{a^2 n}4
   + \frac{\sigma^4}{a^2 n} \log^2\frac{d-r}{r} - \sigma^2 \log\frac{d-r}{r}\right]  \right)\\
=& \varphi\left( -\frac{ a\sqrt{n}- t }\sigma \right),
\end{aligned}
\end{equation*}
which leads $A'(r) = -\frac{d}{r^2} {\Phi}\left( -\frac{t}\sigma \right) <0$ for all $ r\in (0,d)$. 
Therefore, we complete the proof of Lemma \ref{mdPsi}.
\end{proof}

\end{appendix}

\end{document}